\newtheorem{theorem}{Theorem}[section]
\newtheorem{corollary}{Corollary}
\newtheorem{lemma}[theorem]{Lemma}
\newtheorem{proposition}{Proposition}
\theoremstyle{definition}
\newtheorem{definition}[theorem]{Definition}
\title[Warped Poisson brackets on warped products]
      {Warped Poisson brackets on warped products}
\author[Yacine A\"{\i}t Amrane, Rafik Nasri and Ahmed Zeglaoui]{}
\subjclass{53C15, 53D17.}
 \keywords{Warped products, warped Poisson brackets, generalized Poisson brackets,
 metacurvature, pseudo-Riemannian Poisson manifolds, Levi-Civita contravariant connections}
 \email{yacinait@hotmail.com}
 \email{rmag1math@yahoo.fr}
 \email{ahmed.zeglaoui@gmail.com}
\begin{document}
\maketitle


\centerline{\scshape Yacine A\"{\i}t Amrane}
\medskip
{\footnotesize
 \centerline{Laboratory of Algebra and Number Theory}
   \centerline{Facult\'e de Math\'ematiques}
   \centerline{USTHB, BP32, El-Alia, 16111 Bab-Ezzouar, Alger, Algeria}
} 

\medskip

\centerline{\scshape Rafik Nasri}
\medskip
{\footnotesize
 \centerline{Laboratory of Geometry, Analysis, Control and Applications}
   \centerline{Universit\'e de Sa\"{i}da}
   \centerline{BP138, En-Nasr, 20000 Sa\"ida, Algeria}
} 

\medskip

\centerline{\scshape Ahmed Zeglaoui}
\medskip
{\footnotesize
 \centerline{Laboratory of Algebra and Number Theory}
   \centerline{Facult\'e de Math\'ematiques}
   \centerline{USTHB, BP32, El-Alia, 16111 Bab-Ezzouar, Alger, Algeria} }

\bigskip

 \centerline{(Communicated by Manuel de Le\'on)}

\begin{abstract}
In this paper, we generalize the geometry of the product
pseudo-Riemannian manifold equipped with the product Poisson
structure (\cite{Nas2}) to the geometry of a warped product of
pseudo-Riemannian manifolds equipped with a warped Poisson
structure. We construct three bivector fields on a product manifold
and show that each of them lead under certain conditions to a
Poisson structure. One of these bivector fields will be called the
warped bivector field. For a warped product of pseudo-Riemannian
manifolds equipped with a warped bivector field, we compute the
corresponding contravariant Levi-Civita connection and the
curvatures associated with.
\end{abstract}

\section{Introduction}

Poisson manifolds play a fundamental role in Hamiltonian dynamics,
where they serve as a phase space. The geometry of Poisson
structures, which began as an outgrowth of symplectic geometry, has
seen rapid growth in the last three decades, and has now become a
very large theory, with interaction with many domains of
mathematics, including Hamiltonian dynamics, integrable systems,
representation theory, quantum groups, noncommutative geometry,
singularity theory \ldots

The warped product provides a way to construct new pseudo-Riemannian
manifolds from given ones, see \cite{ONeil},\cite{bishop} and
\cite{Beem2}. This construction has useful applications in general
relativity, in the study of cosmological models and black holes. It
generalizes the direct product in the class of pseudo-Riemannian
manifolds and it is defined as follows. Let $(M_1,\tilde{g}_1)$ and
$(M_2,\tilde{g}_2)$ be two pseudo-Riemannian manifolds and let
$f:M_1\longrightarrow\mathbb{R}$ be a positive smooth function on
$M_1$, the warped product of $(M_1,\tilde{g}_1)$ and
$(M_2,\tilde{g}_2)$ is the product manifold $M_1\times M_2$ equipped
with the metric tensor $\tilde{g}_{f}:=\sigma_1^*\tilde{g}_1
+(f\circ \sigma_1)\sigma_2^*\tilde{g}_2$, where $\sigma_1$ and
$\sigma_2$ are the projections of $M_1\times M_2$ onto $M_1$ and
$M_2$ respectively. The manifold $M_1$ is called the base of
$(M_1\times M_2,\tilde{g}_f)$ and $M_2$ the fiber. The function $f$
is called the warping function.

This paper extends the study of product pseudo-Riemannian manifolds
equipped with product Poisson structures, \cite{Nas2}, to warped
products equipped with warped Poisson structures. We will give an
expression for the contravariant Levi-Civita connection and for its
curvatures.

This paper is organized as follows. In section 2, we recall basic
definitions and facts about contravariant connections, generalized
Poisson brackets and pseudo-Riemannian Poisson manifolds. By analogy
with the definition of the Hessian and the gradient of a smooth
function in the covariant case, in section 3 we define some
differential operators associated to the Levi-Civita contravariant
connection. In section 4, we consider bivector fields $\Pi_1$ and
$\Pi_2$ on manifolds $M_1$ and $M_2$ respectively and, for a smooth
function $\mu$ on $M_1$, we define the warped bivector field
$\Pi^{^\mu}$ on $M_1\times M_2$ relative to $\Pi_1,\Pi_2$ and the
warping function $\mu$, then, we give conditions under which
$\Pi^{^\mu}$ becomes a warped Poisson tensor; next, if
$\mathcal{D}^1$ and $\mathcal{D}^2$ are contravariant connections
with respect to $\Pi_1$ and $\Pi_2$, we define a contravariant
connection $\mathcal{D}^\mu$ with respect to the bivector field
$\Pi^{\mu}$ as the warped product of $\mathcal{D}^1$ and
$\mathcal{D}^2$ using the warping function $\mu$, then, we express
the generalized pre-Poisson bracket associated with
$\mathcal{D}^\mu$ in terms of the generalized pre-Poisson brackets
$\{.,.\}_1$ and $\{.,.\}_2$ associated with $\mathcal{D}^1$ and
$\mathcal{D}^2$ respectively, it will be clear that $\{.,.\}$ is a
generalized Poisson bracket, i.e. satisfy the graded Jacobi
identity, if and only if $\{.,.\}_1$ and $\{.,.\}_2$ are; to end
this section, we give two other bivector fields on $M_1\times M_2$
and conditions under which they become Poisson tensors. In the final
section 5, we consider bivector fields $\Pi_1$ and $\Pi_2$ on
pseudo-Riemannian manifolds $(M_1,\tilde{g}_1)$ and
$(M_2,\tilde{g}_2)$ respectively, we compute the Levi-Civita
contravariant connection $\mathcal{D}$ associated with the pair
$(g^f,\Pi^{^\mu})$, where $g^f$ is the cometric of the warped metric
$\tilde{g}_{\frac{1}{f}}$, and we compute the curvatures of
$\mathcal{D}$; finally, if $g_1$ and $g_2$ denote the cometrics of
$\tilde{g}_1$ and $\tilde{g}_2$ respectively, we conclude with some
interesting relationships between the geometry of the triples
$(M_1,g_1,\Pi_1)$ and $(M_2,g_2,\Pi_2)$ and that of $(M_1\times
M_2,g^f,\Pi^{^\mu})$.

\section{Preliminaries}

\subsection{Poisson structures}

Many fundamental definitions and results about Poisson manifolds can
be found in \cite{Vais} and \cite{Jean-Paul Dufour}.

A Poisson bracket on a manifold $M$ is a Lie bracket $\{.,.\}$ on
$C^{\infty}(M)$ satisfying the Leibniz identity
$$
\{\varphi, \phi \psi\} = \{\varphi, \phi\}\psi + \phi\{\varphi,
\psi\}, \quad \forall \; \varphi, \phi, \psi\in C{^\infty}(M).
$$
A Poisson manifold is a manifold equipped with a Poisson bracket.
The Leibniz identity means that, for a given function $\varphi\in
C^\infty(M) $ on a Poisson manifold $M$, the map $\psi\mapsto
\{\varphi,\psi\}$ is a derivation, and thus, there is a unique
vector field $X_\varphi$ on $M$, called the Hamiltonian vector field
of $\varphi$, such that for any $\psi\in C^\infty(M)$ we have
$$
X_\varphi(\psi)=\{\varphi,\psi\}.
$$
A function $\varphi\in C^\infty(M)$ is called a Casimir function if
$X_\varphi\equiv 0$. It follows from the Leibniz identity that there
exists a bivector field $\Pi\in\Gamma(\wedge^2TM)$ such that
\begin{equation}\label{prePoisson}
\{\varphi, \psi\} = \Pi(d\varphi, d\psi).
\end{equation}
If we denote by $[.,.]_S$ the Schouten-Nijenhuis bracket, we have
$$
\{\{\varphi,\phi\},\psi\}+\{\{\phi,\psi\},\varphi\}+\{\{\psi,\varphi\},\phi\}
=\frac{1}{2}[\Pi,\Pi]_S(d\varphi,d\phi,d\psi).
$$
Therefore, the Jacobi identity for $\{., .\}$ is equivalent to the
condition $[\Pi, \Pi]_S = 0$. Conversely, if $\Pi$ is a bivector
field on a manifold $M$ such that $[\Pi,\Pi]_S=0$, then the bracket
$\{.,.\}$ defined on $C^\infty(M)$ by (\ref{prePoisson}) is a
Poisson bracket. Such a bivector field is called a Poisson tensor.

Let $\Pi$ be a bivector field on a manifold $M$, in a local system
of coordinates $(x_1,\ldots, x_n)$ we have
\[
\Pi=\sum_{i<j}\Pi_{ij}\frac{\partial}{\partial
x_i}\wedge\frac{\partial}{\partial x_j}
=\frac{1}{2}\sum_{i,j}\Pi_{ij}\frac{\partial}{\partial x_i}
\wedge\frac{\partial}{\partial x_j},
\]
where $\Pi_{ij}=\Pi(dx_i,dx_j)$. The bivector field $\Pi$ is a
Poisson tensor if and only if it satisfies the following system of
equations :
$$
\oint_{ijk}\sum_{l}\frac{\partial\Pi_{ij}}{\partial x_l}\Pi_{lk}=0,
\quad\textrm{ for all } i,j,k,
$$
where $\oint_{ijk}A_{ijk}$ means the cyclic sum $A_{ijk}+A_{kij}+A_{jki}$.

Given a Poisson tensor $\Pi$ (or more generally, a bivector field)
on a manifold M, we can associate to it a natural homomorphism
called the anchor map :
$$
\sharp_{\Pi} : T^* M\rightarrow TM
$$
defined by
$$
\beta(\sharp_{\Pi}(\alpha))=\Pi{(\alpha,\beta)}
$$
for any $\alpha, \beta\in T^*M$. If $\varphi$ is a function, then
$\sharp_{\Pi}(d\varphi)$ is the Hamiltonian vector field of
$\varphi$.

Let $x$ be a point of $M$. The restriction of $\sharp_{\Pi}$ to the
 cotangent space $T_x^*M$ will be denoted by $\sharp_{\Pi(x)}$.
 The image $C_x =\textrm{Im}\, \sharp_{\Pi(x)}$ of
$\sharp_{\Pi(x)}$ is called the characteristic space of $\Pi$ at
$x$. The dimension $\textrm{rank}_x\Pi = \dim C_x$ of $C_x$ is
called the rank of $\Pi$ at $x$ and $\textrm{rank}\,\Pi =\max_{x\in
M}\{\textrm{rank}_x\Pi\}$ is called the rank of $\Pi$. When
$\textrm{rank}_x\Pi=\dim M$ we say that $\Pi$ is nondegenerate at
$x$. If $\textrm{rank}_x\Pi$ is a constant on $M$, i.e. does not
depend on $x$, then $\Pi$ is called regular.

If $\Pi$ is nondegenerate at all $x$ of $M$ then the anchor map
$\sharp_\Pi$ is invertible and defines a $2$-form $\omega$ on $M$ by
$\omega(X,Y)=\Pi(\sharp^{-1}(X),\sharp^{-1}(Y))$. The condition
$[\Pi,\Pi]_S=0$ is equivalent to $d\omega=0$, see \cite{Jean-Paul
Dufour}, and in this case, we say that $(M,\omega)$ is a symplectic
manifold.

Let $\Pi$ be a Poisson tensor on  a manifold $M$. The distribution
$\textrm{Im}\,\sharp_{\Pi}$ is integrable and defines a singular
foliation $\mathcal{F}$. The leaves of $\mathcal{F}$ are symplectic
immersed submanifolds of $M$. The foliation $\mathcal{F}$ is called
the symplectic foliation associated to the Poisson structure
$(M,\Pi)$.

\subsection{Contravariant connections and generalized Poisson brackets}

Contravariant connections associated to a Poisson structure have
recently turned out to be useful in several areas of Poisson
geometry. Contravariant connections were defined by Vaisman
\cite{Vais} and were analyzed in details by Fernandes \cite{fern}.
This notion appears extensively in the context of noncommutative
deformations (see \cite{Hawkins2,Hawkins}).

Let $M$ be a manifold. A bivector field $\Pi$ on $M$ induces on the
space of differential 1-forms $\Gamma(T^*M)$ a bracket $[.,.]_\Pi$
called the Koszul bracket :
$$
[\alpha,\beta]_{\Pi}=\mathcal{L}_{\sharp_{\Pi}(\alpha)}\beta-\mathcal{L}_{\sharp_{\Pi}(\beta)}
\alpha-d(\Pi(\alpha,\beta)).
$$
If $\Pi$ is a Poisson tensor, the bracket $[.,.]_\Pi$ is a Lie
bracket. For this Lie bracket on $\Gamma(T^*M)$ and for the usual
Lie bracket $[.,.]$ on the space of vector fields $\Gamma(TM)$, the
anchor map $\sharp_{\Pi}$ induces a Lie algebra homomorphism
$\sharp_{\Pi}: \Gamma(T^*M)\rightarrow \Gamma(TM)$, i.e.
\begin{equation}\label{Lie algebra homom}
\sharp_{\Pi}([\alpha,\beta]_{\Pi})=[\sharp_{\Pi}(\alpha),\sharp_{\Pi}(\beta))].
\end{equation}

A contravariant connection on $M$ with respect to the bivector field
$\Pi$ is an $\mathbb{R}$-bilinear map
$$
\begin{array}{rccc}
\mathcal{D}: & \Gamma(T^*M)\times\Gamma(T^*M) & \longrightarrow & \Gamma(T^*M), \\
& (\alpha,\beta) & \mapsto & \mathcal{D}_\alpha\beta%
\end{array}
$$
such that the mapping $\alpha \mapsto \mathcal{D}_{\alpha}\beta$ is
$C^\infty(M)$-linear, that is
$$
\mathcal{D}_{\varphi\alpha}\beta=\varphi \mathcal{D}_{\alpha}\beta
\quad \textrm{ for all }\varphi\in C^\infty(M),
$$
and that the mapping $\beta \mapsto \mathcal{D}_{\alpha}\beta$ is a
derivation in the following sense :
$$
\mathcal{D}_{\alpha}\varphi\beta=\varphi
\mathcal{D}_{\alpha}\beta+\sharp_{\Pi}(\alpha)(\varphi)\beta,
~\text{for} ~\text{all}~ \varphi\in C^\infty(M).
$$

As in the covariant case, for a differential $1$-form $\alpha$,  we
can use the derivation $\mathcal{D}_{\alpha}$ to define a
contravariant derivative of multivector fields of degree $r$ by
\begin{equation}\label{connection multi}
 (\mathcal{D}_{\alpha}Q)(\beta_1,\ldots,\beta_r)=\sharp_{\Pi}(\alpha)(Q(\beta_1,\ldots,\beta_r))-
 \sum_{i=1}^rQ(\beta_1,\ldots,\mathcal{D}_{\alpha}\beta_i,\ldots,\beta_r).
\end{equation}
In particular, if $X$ is a vector field, we have
\begin{equation}\label{connection champ}
(\mathcal{D}_{\alpha}X)(\beta)=\sharp_{\Pi}(\alpha)\beta(X)-\left(\mathcal{D}_{\alpha}\beta\right)(X).
\end{equation}
We can also define a contravariant derivative of $r$-forms by
\begin{equation}\label{connection forms}
(\mathcal{D}_{\alpha}\omega)(X_1,\ldots,X_r)=\sharp_{\Pi}(\alpha)(\omega(X_1,\ldots,X_r))-
 \sum_{i=1}^r\omega(X_1,\ldots,\mathcal{D}_{\alpha}X_i,\ldots,X_r).
\end{equation}

The definitions of the torsion and of the curvature of a
contravariant connection $\mathcal{D}$ are formally identical to the
 definitions in the covariant case (see \cite{fern}) :
$$
\mathcal{T}(\alpha,\beta)=\mathcal{D}_{\alpha}\beta-\mathcal{D}_{\beta}\alpha-[\alpha,\beta]_{\Pi}
\quad\textrm{ and }\quad
\mathcal{R}(\alpha,\beta)=\mathcal{D}_{\alpha}\mathcal{D}_{\beta}-
\mathcal{D}_{\beta}\mathcal{D}_{\alpha}-\mathcal{D}_{[\alpha,\beta]_{\Pi}}.
$$
We say that a connection $\mathcal{D}$ is torsion-free (resp. flat)
if its torsion $\mathcal{T}$ (resp. its curvature $\mathcal{R}$)
vanishes identically. We say that $\mathcal{D}$ is locally symmetric
if $\mathcal{D}\mathcal{R}=0$, i.e., for any
$\alpha,\beta,\gamma,\delta \in \Gamma(T^*M)$, we have :
\begin{equation}\label{locally-sym}
\left(\mathcal{D}_\alpha\mathcal{R}\right)(\beta,\gamma)\delta
:=\mathcal{D}_\alpha(\mathcal{R}(\beta,\gamma)\delta)
-\mathcal{R}(\mathcal{D}_\alpha\beta,\gamma)\delta
-\mathcal{R}(\beta,\mathcal{D}_\alpha\gamma)\delta -
\mathcal{R}(\beta,\gamma)\mathcal{D}_\alpha\delta=0.
\end{equation}

Let us briefly recall the definition of a generalized Poisson
bracket, introduced by E. Hawkins in \cite{Hawkins}. To a
contravariant connection $\mathcal{D}$, with respect to the bivector
field $\Pi$, Hawkins associated an $\mathbb{R}$-bilinear bracket on
the differential graded algebra of differential forms $\Omega^*(M)$
that generalizes the initial pre-Poisson bracket $\{.,.\}$ given by
(\ref{prePoisson}) on $C^\infty(M)$. This bracket, that we will then
call a generalized pre-Poisson bracket associated with the
contravariant connection $\mathcal{D}$ and that we will still denote
by $\{.,.\}$, is defined as follows. The bracket of a function and a
differential form is given by
\begin{equation}\label{bracketconnection}
\{\varphi,\eta\}=\mathcal{D}_{d\varphi}\eta,
\end{equation}
and it is extended to two any differential forms in a way it
satisfies the following properties :
\begin{enumerate}
\item[$\bullet$] $\{.,.\}$ is antisymmetric, i.e.
\begin{equation}\label{antisym}
\{\omega,\eta\}=-(-1)^{\deg\omega\cdot \deg\eta} \{\eta,\omega\},
\end{equation}
\item[$\bullet$] the differential $d$ is a derivation with respect to
$\{.,.\}$, i.e.
\begin{equation}\label{differentialbracket}
d\{\omega,\eta\}=\{d\omega,\eta\}+(-1)^{\deg\omega}\{\omega,d\eta\},
\end{equation}
\item[$\bullet$] $\{.,.\}$ satisfies the product rule
\begin{equation}\label{productrule}
\{\omega,\eta\wedge\lambda\}=\{\omega,\eta\}\wedge\lambda+(-1)^{\deg
\omega\cdot \deg\eta }\eta\wedge\{\omega,\lambda\}.
\end{equation}
\end{enumerate}
When $\mathcal{D}$ is flat, Hawkins showed that there is a $(2,
3)$-tensor $\mathcal{M}$ symmetric in the covariant arguments,
antisymmetric in the contravariant arguments and such that the
following two assertions are equivalent :
\begin{enumerate}
\item[\textit{(i)}] the generalized pre-Poisson bracket $\{.,.\}$
satisfies the graded Jacobi identity
\begin{equation}\label{gen bracket jacobi}
\{\{\omega,\eta\},\lambda\}-\{\omega,\{\eta,\lambda\}\}+(-1)^{\deg\omega.\deg
\eta}\{\eta,\{\omega,\lambda\}\}=0
\end{equation}
\end{enumerate}
and
\begin{enumerate}
\item[\textit{(ii)}] the tensor $\mathcal{M}$ vanishes identically.
\end{enumerate}

The tensor $\mathcal{M}$ is called the metacurvature of
$\mathcal{D}$ and it is given by
$$
\mathcal{M}(d\varphi,\alpha,\beta)=\{\varphi,\{\alpha,\beta\}\}
-\{\{\varphi,\alpha\},\beta\}-\{\{\varphi,\beta\},\alpha\}.
$$
If $\mathcal{M}$ vanishes identically, the contravariant connection
$\mathcal{D}$ is called metaflat and the bracket $\{.,.\}$ is called
the generalized Poisson bracket associated with $\mathcal{D}$.

\subsection{Pseudo-Riemannian Poisson manifolds}

Now, let $(M,\tilde{g})$ be a pseudo-Riemannian manifold. The metric
$\tilde{g}$ defines the musical isomorphisms
$$
\begin{array}{cccc}
\flat_{\tilde{g}}: & TM&  \rightarrow  & T^*M \\
& X & \mapsto & \tilde{g}(X,.)
\end{array}
$$
and its inverse $\sharp_{\tilde{g}}$. We define the cometric $g$ of
the metric $\tilde{g}$ by :
$$
g(\alpha,\beta)=\tilde{g}(\sharp_{\tilde{g}}(\alpha),\sharp_{\tilde{g}}(\beta)).
$$

For a bivector field  $\Pi$ on $M$, there exists a unique
contravariant connection $\mathcal{D}$ associated to the pair
$(g,\Pi)$ such that the metric $g$ is parallel with respect to
$\mathcal{D}$, i.e.
\begin{equation}\label{parallel}
\sharp_{\Pi}(\alpha).{g}(\beta,\gamma)={g}(\mathcal{D}_\alpha\beta,\gamma)+
g(\beta,\mathcal{D}_\alpha \gamma),
\end{equation}
and that $\mathcal{D}$ is torsion-free, i.e.
\begin{equation}\label{connection torsion free}
\mathcal{D}_\alpha\beta-\mathcal{D}_\beta\alpha=[\alpha,\beta]_{\Pi}.
\end{equation}
The connection $\mathcal{D}$ is called the Levi-Civita contravariant
connection associated with $(g,\Pi)$. It is characterized by the
Koszul formula :
\begin{equation}  \label{koszul}
\begin{split}
2g(\mathcal{D}_\alpha\beta,\gamma)=\sharp_{\Pi}(\alpha).g(\beta,\gamma)
+\sharp_{\Pi}(\beta).g(\alpha,\gamma)-\sharp_{\Pi}(\gamma).g(\alpha,\beta) \\
+g([\alpha,\beta]_{\Pi},\gamma)+{g}([\gamma,\alpha]_{\Pi},\beta)+g
([\gamma,\beta]_{\Pi}, \alpha).
\end{split}
\end{equation}
If $\mathcal{R}$ is the curvature of $\mathcal{D}$ and if $\theta_p$
and $\eta_p$ are two non-parallel cotangent vectors at $p\in M$ then
the number
$$
\mathcal{K}_p(\theta_p,\eta_p)=\dfrac{g_p(\mathcal{R}_p(\theta_p,\eta_p)\eta_p,
\theta_p)}{g_p(\theta_p,\theta_p)g_p(\eta_p,\eta_p)-g_p(\theta_p,\eta_p)^2}
$$
is called the sectional contravariant curvature of $(M,g,\Pi)$ at
$p$ in the direction of the plane spanned by the covectors
$\theta_p$ and $\eta_p$ in $T_p^*M$. Let $\{e_1,\ldots,e_n\}$ be a
local orthonormal basis of $T^*_pM$ with respect to $g$ on an open
$U\subset M$. Let $\theta_p$ and $\eta_p$ be two cotangent vectors
at $p\in M$. The Ricci curvature $r_p$ at $p$ and the scalar
curvature $\mathcal{S}_p$ of $(M,g,\Pi)$ at $p$ are defined by
$$
r_p(\theta_p,\eta_p)=\sum_{i=1}^n
g_p(\mathcal{R}_p(\theta_p,e_i)e_i,\eta_p) \textrm{ and }
\mathcal{S}_p=\sum_{j=1}^n \sum_{i=1}^n
g_p(\mathcal{R}_p(e_i,e_j)e_j,e_i).
$$

With the notations above, we say that the triple $(M,g,\Pi)$ is a
pseudo-Riemannian Poisson manifold if $\mathcal{D}\Pi=0$, i.e., for
any $\alpha,\beta,\gamma\in \Gamma(T^*M)$, we have
$$
\sharp_{\Pi}(\alpha).\Pi(\beta,\gamma)-\Pi(\mathcal{D}_\alpha\beta,\gamma)-\Pi(\beta,\mathcal{D}_\alpha
\gamma)=0.
$$
The notions of Levi-Civita contravariant connection and of
pseudo-Riemannian Poisson manifold were introduced by Boucetta in
\cite{Bou1,Bou2}.

We say that the triple $(M,g,\Pi)$ is flat (resp. locally symmetric)
if  $\mathcal{R}=0$ (resp. $\mathcal{D}\mathcal{R}=0$). We say that
the triple $(M,g,\Pi)$ is metaflat if $\mathcal{R}=0$ and
$\mathcal{M}=0$, where $\mathcal{M}$ is the metacurvature of the
Levi-Civita contravariant connection $\mathcal{D}$.

In this paper, for a pseudo-Riemannian manifold $(M,\tilde{g})$ and
a bivector field $\Pi$ on $M$, we will always denote by
$J\in\Gamma(M, TM\otimes T^*M)$ the field of homomorphisms defined
by
\begin{equation}\label{gJPi}
g(J\alpha,\beta)=\Pi(\alpha,\beta).
\end{equation}
If $(M,g,\Pi)$ is a pseudo-Riemannian Poisson manifold and
$\mathcal{D}$ the Levi-Civita contravariant connection associated
with $(g,\Pi)$ then $\mathcal{D}J=0$, i.e., for any $\alpha,\beta
\in \Gamma(T^*M)$,
$$
\mathcal{D}_\alpha(J\beta)=J\mathcal{D}_\alpha \beta.
$$

\section{Some differential operators associated to the pair $(g,\protect\Pi)$}

Let $(M,\tilde{g})$ be a pseudo-Riemannian manifold, $g$ be the
cometric of $\tilde{g}$ and $\Pi$ be a bivector field on $M$. In
this section, we define some new differential operators on $M$.

\begin{definition}
Let $\mathcal{D}$ be the contravariant Levi-Civita connection
associated with the pair $(g,\Pi)$. We define the contravariant
Hessian $H^\varphi_{\Pi}$ of a function $\varphi\in C^{\infty}(M)$
with respect to the bivector field $\Pi$ by
$$
H^\varphi_{\Pi}=\mathcal{D}\mathcal{D}\varphi,
$$
i.e., the contravariant Hessian $H^\varphi_{\Pi}$ of $\varphi$ is
its second contravariant differential.
\end{definition}

\begin{proposition}
The contravariant Hessian $H_{\Pi }^{\varphi }$ of $\varphi $ is a
$(0,2)$-tensor field and we have
$$
H_{\Pi }^{\varphi }(\alpha ,\beta )=\sharp _{\Pi }(\alpha )(\sharp
_{\Pi }(\beta )(\varphi ))-\sharp _{\Pi }(\mathcal{D}_{\alpha }\beta
)(\varphi )=-g(\mathcal{D}_{\alpha }Jd\varphi,\beta ).
$$
Moreover, when $\Pi$ is Poisson, the Hessian $H^\varphi_{\Pi}$ is
symmetric.
\end{proposition}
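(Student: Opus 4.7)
The proposition packages three assertions: tensoriality, two explicit formulas, and symmetry under the Poisson hypothesis. The plan is to unpack the definition $H^\varphi_\Pi = \mathcal{D}\mathcal{D}\varphi$, extend $\mathcal{D}$ to functions in the natural way (that is, $\mathcal{D}_\alpha\varphi = \sharp_\Pi(\alpha)(\varphi)$), and then treat the three items in turn.

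First I would write
\[
H^\varphi_\Pi(\alpha,\beta) \;=\; (\mathcal{D}\mathcal{D}\varphi)(\alpha,\beta) \;=\; \mathcal{D}_\alpha(\mathcal{D}_\beta\varphi) - \mathcal{D}_{\mathcal{D}_\alpha\beta}\varphi \;=\; \sharp_\Pi(\alpha)\bigl(\sharp_\Pi(\beta)(\varphi)\bigr) - \sharp_\Pi(\mathcal{D}_\alpha\beta)(\varphi),
\]
which is already the first stated formula. To verify that $H^\varphi_\Pi$ is a $(0,2)$-tensor, I would test $C^\infty(M)$-linearity in each argument: linearity in $\alpha$ is immediate from the $C^\infty(M)$-linearity of the contravariant connection in its first slot; linearity in $\beta$ requires a short calculation where the two occurrences of $\sharp_\Pi(\alpha)(f)\,\mathcal{D}_\beta\varphi$ produced by the Leibniz rule on $\mathcal{D}_\alpha(f\mathcal{D}_\beta\varphi)$ and by the extra term in $\mathcal{D}_\alpha(f\beta)=f\mathcal{D}_\alpha\beta+\sharp_\Pi(\alpha)(f)\beta$ cancel.

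For the second equality, I would use the defining relation $g(J\alpha,\beta)=\Pi(\alpha,\beta)$ to rewrite $\sharp_\Pi(\beta)(\varphi)=\Pi(\beta,d\varphi)=-g(Jd\varphi,\beta)$, and likewise $\sharp_\Pi(\mathcal{D}_\alpha\beta)(\varphi)=-g(Jd\varphi,\mathcal{D}_\alpha\beta)$. Substituting and then invoking the compatibility condition (\ref{parallel}) of the Levi-Civita contravariant connection with $g$, namely
\[
\sharp_\Pi(\alpha)\bigl(g(Jd\varphi,\beta)\bigr) \;=\; g(\mathcal{D}_\alpha Jd\varphi,\beta) + g(Jd\varphi,\mathcal{D}_\alpha\beta),
\]
the term $g(Jd\varphi,\mathcal{D}_\alpha\beta)$ cancels against the one coming from $\sharp_\Pi(\mathcal{D}_\alpha\beta)(\varphi)$, leaving exactly $-g(\mathcal{D}_\alpha Jd\varphi,\beta)$.

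For the last assertion, I would compute the antisymmetric part:
\[
H^\varphi_\Pi(\alpha,\beta)-H^\varphi_\Pi(\beta,\alpha) \;=\; \sharp_\Pi(\alpha)\sharp_\Pi(\beta)(\varphi) - \sharp_\Pi(\beta)\sharp_\Pi(\alpha)(\varphi) - \sharp_\Pi(\mathcal{D}_\alpha\beta-\mathcal{D}_\beta\alpha)(\varphi).
\]
Torsion-freeness (\ref{connection torsion free}) replaces $\mathcal{D}_\alpha\beta-\mathcal{D}_\beta\alpha$ by $[\alpha,\beta]_\Pi$, and the remaining expression is exactly $\bigl([\sharp_\Pi(\alpha),\sharp_\Pi(\beta)]-\sharp_\Pi([\alpha,\beta]_\Pi)\bigr)(\varphi)$. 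When $\Pi$ is Poisson, the anchor is a Lie algebra homomorphism by (\ref{Lie algebra homom}), so this vanishes. No real obstacle arises; the only subtle point is keeping track of the sign in $\Pi(\beta,d\varphi) = -g(Jd\varphi,\beta)$ when passing to the $Jd\varphi$ form, and recognizing that symmetry is essentially the statement that the anchor is a Lie homomorphism, which is encoded in the Poisson condition.
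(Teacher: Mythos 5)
Your proposal is correct and follows essentially the same route as the paper's proof: the same unpacking of $\mathcal{D}\mathcal{D}\varphi$ via $\mathcal{D}_\alpha\varphi=\sharp_\Pi(\alpha)(\varphi)$, the same sign-tracked substitution $\sharp_\Pi(\gamma)(\varphi)=-g(Jd\varphi,\gamma)$ combined with the compatibility condition (\ref{parallel}), and the same use of torsion-freeness together with (\ref{Lie algebra homom}) for symmetry. Your explicit check of $C^\infty(M)$-linearity in $\beta$ is a detail the paper leaves implicit, but it does not change the argument.
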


\begin{proof}
We have
$H^\varphi_{\Pi}(\alpha,\beta)=\mathcal{D}\mathcal{D}\varphi(\alpha,\beta)=
\left(\mathcal{D}_\alpha(\mathcal{D}\varphi)\right)(\beta)$. From
(\ref{connection champ}) and since we have
$\mathcal{D}\varphi=d\varphi\circ\sharp_{\Pi}$, i.e.
 $\mathcal{D}_{\alpha}\varphi=\sharp_{\Pi}(\alpha)(\varphi)$, we
 deduce that
\begin{equation}\label{f1}
H_\Pi^\varphi(\alpha,\beta)=\sharp_{\Pi}(\alpha)\left(\mathcal{D}_\beta\varphi\right)
-\mathcal{D}_{\mathcal{D}\alpha\beta}\varphi=
\sharp_\Pi(\alpha)\left(\sharp_\Pi(\beta)(\varphi)\right)-\sharp_\Pi(\mathcal{D}_\alpha\beta)(\varphi).
\end{equation}
To prove the second equality, notice that, for any $1$-form $\gamma$
on $M$, we have
$$
\sharp_\Pi(\gamma)(\varphi)=\Pi(\gamma,d\varphi)=-\Pi(d\varphi,\gamma)=-g(Jd\varphi,\gamma)=-g(\gamma,Jd\varphi),
$$
therefore, substituting in (\ref{f1}), taking $\gamma=\beta$ and
$\gamma=\mathcal{D}_\alpha\beta$, we get
$$
H^\varphi_{\Pi}(\alpha,\beta)=-\sharp_{\Pi}(\alpha)(g(\beta,Jd\varphi))
 +g(\mathcal{D}_{\alpha}\beta,Jd\varphi),
$$
and using (\ref{parallel}), we get
$H^\varphi_{\Pi}(\alpha,\beta)=-g(\mathcal{D}_{\alpha}Jd\varphi,\beta)$.

Now, assume that $\Pi$ is Poisson. By (\ref{Lie algebra homom}) and
since $\mathcal{D}$ is torsion-free (\ref{connection torsion free}),
we have
{\small$$
\sharp_\Pi(\mathcal{D}_\alpha\beta)=\sharp_\Pi\left(\mathcal{D}_\beta\alpha+\left[\alpha,\beta\right]_\Pi\right)
=\sharp_\Pi\left(\mathcal{D}_\beta\alpha\right)+\sharp_\Pi\left(\left[\alpha,\beta\right]_\Pi\right)
=\sharp_\Pi\left(\mathcal{D}_\beta\alpha\right)+\left[\sharp_\Pi(\alpha),\sharp_\Pi(\beta)\right],
$$}and by substituting in (\ref{f1}), we get
$$
\begin{array}{ll}
H_\Pi^\varphi(\alpha,\beta)
&=\sharp_\Pi(\alpha)\left(\sharp_\Pi(\beta)(\varphi)\right)
-\sharp_\Pi\left(\mathcal{D}_\beta\alpha\right)(\varphi)-\left[\sharp_\Pi(\alpha),\sharp_\Pi(\beta)\right](\varphi)\\
&=\sharp_\Pi(\beta)\left(\sharp_\Pi(\alpha)(\varphi)\right)
-\sharp_\Pi\left(\mathcal{D}_\beta\alpha\right)(\varphi),
\end{array}
$$
therefore, $H_\Pi^\varphi$ is symmetric.
\end{proof}

Observe that in case $(M,g,\Pi)$ is a pseudo-Riemannian Poisson
manifold, we have
$$
H_\Pi^\varphi(\alpha,\beta)=-g(J\mathcal{D}_{\alpha}d\varphi,\beta)
=-\Pi(\mathcal{D}_\alpha
d\varphi,\beta)=\Pi(\beta,\mathcal{D}_\alpha d\varphi).
$$

\begin{proposition}
If for any $\varphi \in C^{\infty }(M)$ we put
$$
\triangleleft_{\Pi}(\varphi)=\textrm{tr}_{g}(\alpha \mapsto
\mathcal{D}_{\alpha }Jd\varphi ) \quad\textrm{ and }\quad
\triangleright_{\Pi}(\varphi)=\textrm{tr}_{g}(\alpha \mapsto
\mathcal{D}_{\alpha}d\varphi),
$$
where $\textrm{tr}_{g}$ denotes the trace with respect to $g$, then
$\triangleleft _{\Pi }$ is a differential operator of degree two on
$C^{\infty}(M)$ and $\triangleright_{\Pi }$ is a vector field on
$M$.
\end{proposition}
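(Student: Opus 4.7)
The plan is to verify the two claims separately, each resting on the observation that for any fixed one-form $\xi$ the assignment $\alpha \mapsto \mathcal{D}_\alpha \xi$ is $C^\infty(M)$-linear in $\alpha$ by definition of a contravariant connection; hence both $\alpha \mapsto \mathcal{D}_\alpha J d\varphi$ and $\alpha \mapsto \mathcal{D}_\alpha d\varphi$ define $(1,1)$-tensor fields, so their traces with respect to $g$ are well-defined smooth functions on $M$. $\mathbb{R}$-linearity in $\varphi$ of both operators is immediate from the $\mathbb{R}$-bilinearity of $d$, $\mathcal{D}$, and the trace.

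For $\triangleleft_\Pi$, I would invoke the previous proposition, which gives $-g(\mathcal{D}_\alpha J d\varphi, \beta) = H^\varphi_\Pi(\alpha, \beta)$, so that $\triangleleft_\Pi(\varphi) = -\textrm{tr}_g H^\varphi_\Pi$. Combined with the first formula of that proposition, $H^\varphi_\Pi(\alpha,\beta) = \sharp_\Pi(\alpha)(\sharp_\Pi(\beta)(\varphi)) - \sharp_\Pi(\mathcal{D}_\alpha \beta)(\varphi)$, this makes it transparent that in local coordinates $\triangleleft_\Pi(\varphi)$ involves only first and second partial derivatives of $\varphi$, so it is a differential operator of degree at most two on $C^\infty(M)$.

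For $\triangleright_\Pi$, the task reduces to checking that it is a derivation of $C^\infty(M)$, hence a vector field. Starting from $d(\varphi\psi) = \psi d\varphi + \varphi d\psi$ and applying the Leibniz rule satisfied by the contravariant connection, one computes
$$\mathcal{D}_\alpha d(\varphi\psi) = \psi\, \mathcal{D}_\alpha d\varphi + \varphi\, \mathcal{D}_\alpha d\psi + \sharp_\Pi(\alpha)(\psi)\, d\varphi + \sharp_\Pi(\alpha)(\varphi)\, d\psi.$$
Taking $\textrm{tr}_g$, the first two summands produce $\psi\, \triangleright_\Pi(\varphi) + \varphi\, \triangleright_\Pi(\psi)$ by $C^\infty$-linearity of the trace in the coefficient function.

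The only nontrivial point, and the main obstacle in the argument, is to show that the two remaining cross-terms cancel. Writing $\sharp_\Pi(\alpha)(\psi) = \Pi(\alpha, d\psi) = -\alpha(X_\psi)$, the endomorphism $\alpha \mapsto \sharp_\Pi(\alpha)(\psi)\, d\varphi$ factors as $-d\varphi \otimes X_\psi$, whose trace equals $-X_\psi(\varphi) = \Pi(d\varphi, d\psi)$; symmetrically, the trace of $\alpha \mapsto \sharp_\Pi(\alpha)(\varphi)\, d\psi$ equals $\Pi(d\psi, d\varphi)$. The antisymmetry of $\Pi$ forces these two contributions to cancel, yielding the Leibniz rule $\triangleright_\Pi(\varphi\psi) = \psi\, \triangleright_\Pi(\varphi) + \varphi\, \triangleright_\Pi(\psi)$ and completing the proof.
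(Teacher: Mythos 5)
Your proof is correct, and it reaches the same two conclusions by the same underlying mechanism (second-order expansion for $\triangleleft_{\Pi}$, Leibniz rule with cross-terms killed by the antisymmetry of $\Pi$ for $\triangleright_{\Pi}$), but the execution is genuinely different from the paper's. The paper works throughout in a local $g$-orthonormal coframe $\{dx_1,\ldots,dx_n\}$: for $\triangleleft_{\Pi}$ it expands $Jdx_j=\sum_k\Pi_{jk}dx_k$ and derives the explicit local expression
$\triangleleft_{\Pi}(\varphi)=\sum_{i,j,k}\frac{\partial\Pi_{ij}}{\partial x_k}\Pi_{ki}\frac{\partial\varphi}{\partial x_j}+\sum_{i,j,k}\Pi_{ij}\Pi_{ki}\frac{\partial^2\varphi}{\partial x_j\partial x_k}$
(after discarding a Christoffel-symbol term), and for $\triangleright_{\Pi}$ it computes the defect $\triangleright_{\Pi}(\varphi\psi)-\varphi\triangleright_{\Pi}(\psi)-\psi\triangleright_{\Pi}(\varphi)$ in coordinates and observes it is the contraction of the antisymmetric $\Pi_{ij}$ against a symmetric expression. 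You instead reuse the preceding Hessian proposition to write $\triangleleft_{\Pi}(\varphi)=-\mathrm{tr}_g H^{\varphi}_{\Pi}$ and read off the order from $H^{\varphi}_{\Pi}(\alpha,\beta)=\sharp_{\Pi}(\alpha)(\sharp_{\Pi}(\beta)(\varphi))-\sharp_{\Pi}(\mathcal{D}_{\alpha}\beta)(\varphi)$, and you handle the cross-terms for $\triangleright_{\Pi}$ invariantly by recognizing them as the rank-one endomorphisms $-d\varphi\otimes X_{\psi}$ and $-d\psi\otimes X_{\varphi}$, whose traces $\Pi(d\varphi,d\psi)$ and $\Pi(d\psi,d\varphi)$ cancel. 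Your route is cleaner and coordinate-free, and in particular sidesteps the paper's somewhat delicate appeal to the vanishing of $\Gamma^{i}_{ik}$; what the paper's computation buys in exchange is the explicit second-order local formula for $\triangleleft_{\Pi}$, which exhibits its principal symbol. (Like the paper, you only establish that $\triangleleft_{\Pi}$ has order \emph{at most} two; neither argument checks that the second-order part is nonzero, so this is not a gap relative to the paper.)
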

\begin{proof}
One can easily see that both $\triangleleft _{\Pi}$ and
$\triangleright_{\Pi}$ are $\mathbb{R}$-linear. Now, let
$\{dx_{1},\ldots,dx_{n}\}$ be a local, orthonormal basis of
$1$-forms and let us show that $\triangleleft_\Pi$ is a differential
operator of degree two on $C^{\infty}(M)$. If $\varphi\in
C^{\infty}(M)$, we have
$$
\triangleleft_{\Pi}(\varphi)=\sum_{i=1}^{n}g(\mathcal{D}_{dx_{i}}Jd\varphi
,dx_{i})=\sum_{i,j=1}g(\mathcal{D}_{dx_i}(\frac{\partial\varphi}{\partial
x_j}Jdx_j),dx_i).
$$
Since by (\ref{gJPi}) we have $Jdx_j=\sum_{k=1}^n\Pi_{jk}dx_k$, then
$$
\mathcal{D}_{dx_i}(\frac{\partial\varphi}{\partial
x_j}Jdx_j)=\sum_{k=1}^n\left(\sharp_{\Pi}(dx_i)(\frac{\partial\varphi}{\partial
x_j}\Pi_{jk})dx_k +\frac{\partial\varphi}{\partial
x_j}\Pi_{jk}\mathcal{D}_{dx_i}dx_k\right).
$$
Therefore,
$$
\triangleleft _{\Pi
}(\varphi)=\sum_{i,j,k=1}^n\Pi_{ik}\frac{\partial}{\partial
x_k}(\frac{\partial\varphi}{\partial x_j}\Pi_{ji})
+\sum_{i,j,k=1}^n\frac{\partial\varphi}{\partial
x_j}\Pi_{jk}\Gamma_{ik}^i.
$$
Finally, as $\Gamma_{ik}^i=0$, we get
$$
\triangleleft _{\Pi }(\varphi)=\sum_{i,j,k=1}^n\frac{\partial
\Pi_{ij}}{\partial x_k}\Pi_{ki}\frac{\partial \varphi}{\partial
x_j}+\sum_{i,j,k=1}^n\Pi_{ij}\Pi_{ki}\frac{\partial^2\varphi}{\partial
x_j\partial x_k}.
$$
To show that $\triangleright_{\Pi } $ corresponds to a vector field
one has to verify that it satisfies the Leibnitz rule with respect
to $\varphi$, i.e. that for any $\varphi, \psi\in
\mathcal{C}^{\infty}(M)$, we have
$$
\triangleright _{\Pi }(\varphi\psi
)=\varphi\sum_{i=1}^{n}g(\mathcal{D}_{dx_{i}}d\psi ,dx_{i})
+\psi\sum_{i=1}^{n}g(\mathcal{D}_{dx_{i}}d\varphi  ,dx_{i}).
$$
A direct calculation using the definition of $\mathcal{D}$ gives
$$
\triangleright _{\Pi }(\varphi\psi )-\varphi\triangleright _{\Pi
}(\psi )-\psi\triangleright _{\Pi }(\varphi )
=\sum_{i,j=1}^n\Pi_{ij}(\frac{\partial\psi}{dx_i}\frac{\partial\varphi}{dx_j}
+\frac{\partial\psi}{dx_j}\frac{\partial\varphi}{dx_i}).
$$
It is clear that the right hand side of this formula is zero.
\end{proof}

\section{Poisson brackets on a product manifold}
\subsection{Horizontal and vertical lifts}

Throughout this paper $M_{1}$ and $M_{2}$ will be respectively
$m_{1}$ and $m_{2}$ dimensional manifolds, $M_1\times M_2$ the
product manifold with the natural product coordinate system and
$\sigma _{1}:M_{1}\times M_{2}\rightarrow M_{1}$ and $\sigma
_{2}:M_{1}\times M_{2}\rightarrow M_{2}$ the usual projection maps.

We recall briefly how the calculus on the product manifold $M_1
\times M_2$ derives from that of $M_1$ and $M_2$ separately. For
details see \cite{ONeil}.

Let $\varphi _{1}$ in $C^{\infty }(M_{1})$. The horizontal lift of
$\varphi_{1}$ to $M_{1}\times M_{2}$ is $\varphi_{1}^{h}=\varphi
_{1}\circ \sigma _{1}$. One can define the horizontal lifts of
tangent vectors as follows. Let $p\in M_1$ and let $X_{p}\in
T_{p}M_{1}$. For any $q\in M_{2}$ the horizontal lift
 of $X_{p}$ to $(p,q)$ is the unique tangent vector $X_{(p,q)}^{h}$
in $T_{(p,q)}(M_{1}\times M_2)$ such that
$d_{(p,q)}\sigma_{1}(X_{(p,q)}^{h})=X_{p}$ and
$d_{(p,q)}\sigma_{2}(X_{(p,q)}^{h})=0$. We can also define the
horizontal lifts of vector
 fields as follows. Let $X_1\in \Gamma (TM_{1})$. The horizontal lift of $X_1$
 to $M_{1}\times M_{2}$ is the vector field
$X_1^{h}\in \Gamma (T(M_{1}\times M_{2}))$ whose value at each
$(p,q)$ is the horizontal lift of the tangent vector $(X_1){p}$ to
$(p,q)$. For $(p,q)\in M_1\times M_2$, we will denote the set of the
horizontal lifts to $(p,q)$ of all the tangent vectors of $M_{1}$ at
$p$ by $L(p,q)(M_{1})$. We will denote the set of the horizontal
lifts of all vector fields on $M_{1}$ by $\mathfrak{L}(M_{1})$.

The vertical lift $\varphi_2^v$ of a function $\varphi_2\in
C^{\infty}(M_2)$ to $M_1\times M_2$ and the vertical lift $X_2^v$ of
a vector field $X_2\in \Gamma (TM_{2})$ to $M_1\times M_2$ are
defined in the same way using the projection $\sigma_2$. Note that
the spaces $\mathfrak{L}(M_{1})$ of the horizontal lifts and
$\mathfrak{L}(M_{2})$ of the vertical lifts are vector subspaces of
$\Gamma (T(M_{1}\times M_{2}))$ but neither is invariant under
multiplication by arbitrary functions $\varphi \in C^{\infty
}(M_{1}\times M_{2})$.

We define the horizontal lift of a covariant tensor $\omega_1$ on
$M_1$ to be its pullback $\omega_1^h$ to $M_1\times M_2$ by the
means of the projection map $\sigma_1$, i.e.
$\omega_1^h:=\sigma_1^*(\omega_1)$. In particular, for a $1$-form
$\alpha_1$ on $M_1$ and a vector field $X$ on $M_1\times M_2$, we
have
$$
(\alpha_1^h)(X)=\alpha_1(d\sigma_1(X)).
$$
Explicitly, if $u$ is a tangent vector to $M_1\times M_2$ at
$(p,q)$, then
$$
(\alpha_1^h)_{(p,q)}(u)=(\alpha_1)_p(d_{(p,q)}\sigma_1(u)).
$$
Similarly, we define the vertical lift of a covariant tensor $w_2$
on $M_2$ to be its pullback $\omega_2^v$ to $M_1\times M_2$ by the
means of the projection map $\sigma_2$.

Observe that if $\{dx_1,\ldots,dx_{n_1}\}$ is the local basis of the
$1$-forms relative to a chart $(U,\Phi)$ of $M_1$ and
$\{dy_1,\ldots,dy_{n_2}\}$ the local basis of the $1$-forms relative
to a chart $(V,\Psi)$ of $M_2$, then
$\{(dx_1)^h,\ldots,(dx_{n_1})^h,(dy_1)^v,\ldots,(dy_{n_2})^v\}$ is
the local basis of the $1$-forms relative to the chart $(U\times
V,\Phi\times \Psi)$ of $M_1\times M_2$.

Let $Q_1$ (resp. $Q_2$) be an $r$-contravariant tensor on $M_1$
(resp. on $M_2$). We define the horizontal lift $Q_1^h$ of $Q_1$
(resp. the vertical lift $Q_2^v$ of $Q_2$) to $M_1\times M_2$ by
$$
\left\{\begin{array}{ll} Q_1^h(\alpha_1^h,\ldots
,\alpha_r^h)=\left[Q_1(\alpha_1,\ldots ,\alpha_r)\right]^h \textrm{ and}  \\
i_{\beta^v}Q_1^h=0, \quad \forall \beta\in\Gamma(T^*M_2),
\end{array}\right.
$$
resp.
$$
\left\{\begin{array}{ll} Q_2^v(\beta_1^v,\ldots
,\beta_r^v)=\left[Q_2(\beta_1,\ldots ,\beta_r)\right]^v \textrm{ and}  \\
i_{\alpha^h}Q_2^v=0, \quad \forall \alpha\in\Gamma(T^*M_1),
\end{array}\right.
$$
where $i$ denotes the inner product. The following lemma will be
useful for our computations.

\begin{lemma} \label{lift} $\;$
\begin{enumerate}
\item Let $\varphi_i\in C^\infty(M_i)$, $X_i,Y_i\in \Gamma (TM_{i})$
and $\alpha _{i}\in \Gamma (T^* M_{i})$, $i=1,2$. Let
$\varphi=\varphi_1^h+\varphi_2^v$, $X=X_{1}^{h}+X_{2}^{v}$ and
$\alpha ,\beta \in \Gamma (T^*(M_{1}\times M_{2}))$. Then
\begin{enumerate}
\item[i/] For all $(i,l)\in \{(1,h),(2,v)\}$, we have
$$
X_i^l(\varphi)=X_i(\varphi_i)^l,\quad [X,Y_i^l]=[X_i,Y_i]^l \quad
\textrm{ and } \quad  \alpha _{i}^{l}(X)=\alpha_{i}(X_{i})^{l}.
$$
\item[ii/] If for all $(i,l)\in \{(1,h),(2,v)\}$ we have $\alpha
(X_{i}^{l})=\beta (X_{i}^{l})$, then $\alpha =\beta$.
\end{enumerate}
\item Let $\omega_i$ and $\eta_i$ be $r$-forms on $M_i$, $i=1,2$.
Let $\omega=\omega_1^h+\omega_2^v$ and $\eta=\eta_1^h+\eta_2^v$. We
have
$$
d\omega=(d\omega_1)^h+(d\omega_2)^v \quad \textrm{ and } \quad
\omega \wedge \eta=(\omega_1\wedge \eta_1)^h+(\omega_2\wedge
\eta_2)^v.
$$
\end{enumerate}
\end{lemma}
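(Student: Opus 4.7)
The plan is to reduce every assertion to the defining compatibilities of the lifts with the projections $\sigma_1$ and $\sigma_2$, namely $d\sigma_1(X_1^h)=X_1$, $d\sigma_2(X_1^h)=0$, and symmetrically $d\sigma_2(X_2^v)=X_2$, $d\sigma_1(X_2^v)=0$. Once these projection formulas are in hand, every statement in the lemma becomes a routine unfolding of definitions; only the Lie bracket identity requires a bit of care.

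For item i/ of Part 1, the function identity $X_i^l(\varphi)=X_i(\varphi_i)^l$ follows by writing $\varphi_i^l=\varphi_i\circ\sigma_i$ and applying the chain rule, using that $X_j^{l'}$ with $(j,l')\neq(i,l)$ projects to $0$ under $\sigma_i$ and hence annihilates $\varphi_i^l$. The bracket identity $[X,Y_i^l]=[X_i,Y_i]^l$ follows from the standard rule that $F$-relatedness of pairs of vector fields passes to their Lie brackets: for instance, $X=X_1^h+X_2^v$ and $Y_1^h$ are $\sigma_1$-related to $X_1$ and $Y_1$, and both $\sigma_2$-related to $0$; therefore $[X,Y_1^h]$ is $\sigma_1$-related to $[X_1,Y_1]$ and $\sigma_2$-related to $0$, which by the uniqueness of the horizontal lift forces $[X,Y_1^h]=[X_1,Y_1]^h$, with the vertical case being symmetric. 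The covector identity $\alpha_i^l(X)=\alpha_i(X_i)^l$ is then the definition $\alpha_i^l=\sigma_i^*\alpha_i$ combined with the pushforward relations above.

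For item ii/, the decisive point is that at each $(p,q)\in M_1\times M_2$ the tangent space splits as $T_{(p,q)}(M_1\times M_2)=L_{(p,q)}(M_1)\oplus L_{(p,q)}(M_2)$, so any two cotangent vectors at $(p,q)$ that agree on every horizontal and vertical lift must agree on the entire tangent space. Pointwise agreement then yields the equality of the global $1$-forms.

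For Part 2, both identities reduce to standard properties of the pullback: since each $\sigma_i^*$ commutes with the exterior differential and is a morphism of exterior algebras, one has $d(\omega_i^l)=(d\omega_i)^l$ and $\omega_i^l\wedge\eta_i^l=(\omega_i\wedge\eta_i)^l$ for $(i,l)\in\{(1,h),(2,v)\}$, and summing these yields the claimed decompositions. The main obstacle, such as it is, lies entirely in the bracket identity of item i/, where one must invoke $F$-relatedness applied independently to the two projections; every other step is pure bookkeeping on definitions.
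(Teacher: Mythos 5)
The paper offers no argument of its own for this lemma --- the proof is the single line ``See \cite{Nas}'' --- so there is nothing internal to compare against, and your write-up must stand on its own. Your treatment of Part 1 does: the function and covector identities are correct unwindings of $\varphi_i^l=\varphi_i\circ\sigma_i$ and $\alpha_i^l=\sigma_i^*\alpha_i$ together with the projection relations $d\sigma_1(X_1^h)=X_1$, $d\sigma_2(X_1^h)=0$ and their vertical counterparts; the bracket identity is correctly obtained from $F$-relatedness applied to both projections plus the pointwise splitting $T_{(p,q)}(M_1\times M_2)=L_{(p,q)}(M_1)\oplus L_{(p,q)}(M_2)$; and item ii/ follows from that same splitting (granting the intended reading that the hypothesis quantifies over all vector fields $X_i$, and that any tangent vector extends to a global field). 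The identity $d\omega=(d\omega_1)^h+(d\omega_2)^v$ is likewise fine, since pullback commutes with $d$ and $d$ is additive.

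The wedge-product identity is where your argument breaks. From $\omega_i^l\wedge\eta_i^l=(\omega_i\wedge\eta_i)^l$ you cannot conclude by ``summing,'' because $\wedge$ is bilinear, not additive: expanding $(\omega_1^h+\omega_2^v)\wedge(\eta_1^h+\eta_2^v)$ produces the cross terms $\omega_1^h\wedge\eta_2^v+\omega_2^v\wedge\eta_1^h$, which your proof silently discards. These terms do not vanish in general: take $r=1$, $\omega_1=dx$, $\eta_2=dy$, $\eta_1=0$, $\omega_2=0$; then $\omega\wedge\eta=(dx)^h\wedge(dy)^v\neq 0$ while $(\omega_1\wedge\eta_1)^h+(\omega_2\wedge\eta_2)^v=0$. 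So the displayed identity, read for arbitrary $\omega_i,\eta_i$, is false, and no proof can close that gap; what is true, and what your computation actually establishes, is the identity in the pure cases $\omega=\omega_1^h,\ \eta=\eta_1^h$ and $\omega=\omega_2^v,\ \eta=\eta_2^v$, which are the only instances the paper invokes later (e.g.\ in the proof of Proposition \ref{generalized bracket}). A complete write-up should either restrict the wedge identity to those cases or record the cross terms explicitly.
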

\begin{proof}
See \cite{Nas}.
\end{proof}

\subsection{The warped Poisson tensor}

Now, we construct a bivector field on a product manifold and give
the conditions under which it becomes a Poisson tensor.

Let $\Pi_1$ and $\Pi_2$ be bivector fields on $M_1$ and $M_2$
respectively. Given a smooth function $\mu$ on $M_1$, we define a
bivector field $\Pi^{^\mu}$ on $M_1\times M_2$ by
$\Pi^{^\mu}=\Pi_1^h+\mu^h\Pi_2^v$. It is the unique bivector field
such that
$$
\Pi^{^\mu}(\alpha_{1}^{h},\beta_{1}^{h})=\Pi_{1}(\alpha_{1},\beta_{1})^{h},
\;
\Pi^{^\mu}(\alpha_{2}^{v},\beta_{2}^{v})=\mu^h\Pi_{2}(\alpha_{2},\beta_{2})^{v}
\textrm{ and } \Pi^{^\mu}(\alpha_{1}^{h},\beta_{2}^{v})=0,
$$
for any $\alpha_i,\beta_i\in \Gamma(T^*M_i)$, $i=1,2$. We call
$\Pi^{^\mu}$ the warped bivector field relative to $\Pi_1,\Pi_2$ and
the warping function $\mu$.

\begin{proposition}\label{bracket}
Let $\alpha_i,\beta_i\in\Gamma(T^*M_i)$, $ i=1,2$. Let
$\alpha=\alpha_1^h+\alpha_2^v$ and $\beta=\beta_1^h+\beta_2^v$. Then
\begin{enumerate}
\item $\sharp_{\Pi^{^\mu}}(\alpha)=[\sharp_{\Pi_1}(\alpha_1)]^h+\mu^h[
\sharp_{\Pi_2}(\alpha_2)]^v$,
\item $\mathcal{L}_{\sharp_{\Pi^{^\mu}}(\alpha)}\beta=(\mathcal{L}
_{\sharp_{\Pi_1}(\alpha_1)}\beta_1)^h+
\mu^h(\mathcal{L}_{\sharp_{\Pi_2}(\alpha_2)}\beta_2)^v
+\Pi_2(\alpha_2,\beta_2)^v(d\mu)^h$,
\item $[\alpha,\beta]_{\Pi^{^\mu}}=[\alpha_1,\beta_1]_{\Pi_1}^h+\mu^h[
\alpha_2,\beta_2]_{\Pi_2}^v+\Pi_2(\alpha_2,\beta_2)^v(d\mu)^h$.
\end{enumerate}
\end{proposition}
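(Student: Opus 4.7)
The proof is a sequence of direct calculations that all reduce, via Lemma \ref{lift}(1ii), to testing each identity on vector fields of the form $Y_1^h$ and $Y_2^v$. The three parts should be done in order, each feeding into the next.

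For part (1), I start from the defining property $\gamma(\sharp_{\Pi^{^\mu}}(\alpha))=\Pi^{^\mu}(\alpha,\gamma)$ and evaluate on $\gamma=\gamma_1^h$ and $\gamma=\gamma_2^v$. Using the defining formulas for $\Pi^{^\mu}$ on lifted forms and the identities $\alpha_i^l(X_i^l)=\alpha_i(X_i)^l$, $\alpha_i^l(X_j^{l'})=0$ for $i\neq j$ (Lemma \ref{lift}(1i)), both sides of the conjectured formula match on each type of test form; Lemma \ref{lift}(1ii) then gives the desired equality. The key input is just the vanishing $\Pi^{^\mu}(\alpha_1^h,\beta_2^v)=0$.

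For part (2), write $X:=\sharp_{\Pi^{^\mu}}(\alpha)=X_1^h+\mu^h X_2^v$ with $X_i:=\sharp_{\Pi_i}(\alpha_i)$, using part (1). I use $(\mathcal{L}_X\beta)(Y)=X(\beta(Y))-\beta([X,Y])$ and test on $Y=Y_1^h$ and $Y=Y_2^v$ separately. The bracket expansion $[X_1^h+\mu^h X_2^v,\,Y_1^h]=[X_1,Y_1]^h-Y_1(\mu)^h X_2^v$ (from the derivation property $[fA,B]=f[A,B]-B(f)A$ and Lemma \ref{lift}(1i)) produces the crucial $(d\mu)^h$ term; the symmetric computation on $Y_2^v$ gives $[X_1^h+\mu^h X_2^v,Y_2^v]=\mu^h[X_2,Y_2]^v$ (note $X_2^v(\mu^h)=0$ since $\mu^h$ depends only on $M_1$). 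Collecting the pieces and recognizing $\beta_2(X_2)=\Pi_2(\alpha_2,\beta_2)$ and $\beta_1(X_1)=\Pi_1(\alpha_1,\beta_1)$, the two tested values agree with the RHS applied to $Y_1^h$ and $Y_2^v$; Lemma \ref{lift}(1ii) concludes.

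For part (3), I apply the Koszul formula $[\alpha,\beta]_{\Pi^{^\mu}}=\mathcal{L}_{\sharp_{\Pi^{^\mu}}(\alpha)}\beta-\mathcal{L}_{\sharp_{\Pi^{^\mu}}(\beta)}\alpha-d(\Pi^{^\mu}(\alpha,\beta))$. The first two Lie derivatives are handled by part (2). For the last term, $\Pi^{^\mu}(\alpha,\beta)=\Pi_1(\alpha_1,\beta_1)^h+\mu^h\Pi_2(\alpha_2,\beta_2)^v$, and Lemma \ref{lift}(2) together with the Leibniz rule $d(fg)=g\,df+f\,dg$ yields
\[
d(\Pi^{^\mu}(\alpha,\beta))=(d\Pi_1(\alpha_1,\beta_1))^h+\mu^h(d\Pi_2(\alpha_2,\beta_2))^v+\Pi_2(\alpha_2,\beta_2)^v(d\mu)^h.
\]
Combining the three contributions, the three $(d\mu)^h$-terms combine as $\Pi_2(\alpha_2,\beta_2)^v-\Pi_2(\beta_2,\alpha_2)^v-\Pi_2(\alpha_2,\beta_2)^v=\Pi_2(\alpha_2,\beta_2)^v$, while the purely horizontal and vertical parts assemble into $[\alpha_1,\beta_1]_{\Pi_1}^h+\mu^h[\alpha_2,\beta_2]_{\Pi_2}^v$ by the Koszul formulas on $M_1$ and $M_2$.

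The main obstacle is purely bookkeeping: making sure the coefficient $\mu^h$ is correctly differentiated every time it appears inside a bracket or differential, and that the sign conventions in the Koszul formula produce exactly one surviving $(d\mu)^h$ term rather than zero or two. The key structural fact underlying everything is that $\mu^h$ depends only on the $M_1$-factor, so $X_2^v(\mu^h)=0$; this annihilates all potential cross-terms except the single $Y_1^h(\mu^h)=Y_1(\mu)^h$ contribution that produces the warping correction.
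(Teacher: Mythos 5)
Your proposal is correct and follows essentially the same route as the paper: part (1) by testing both sides on lifted $1$-forms and invoking Lemma \ref{lift}, part (2) via the derivation property of the bracket in the function $\mu^h$ (the paper phrases this as $\mathcal{L}_{\mu^h X}\beta=\mu^h\mathcal{L}_X\beta+\beta(X)\,d\mu^h$ rather than testing on $Y_1^h,Y_2^v$, but the computation is identical), and part (3) as a direct consequence of (2) through the Koszul bracket. Your sign bookkeeping for the three $(d\mu)^h$ contributions in part (3) is right, and you merely spell out what the paper leaves as ``a direct consequence of 2.''
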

\begin{proof}
1. By Lemma  \ref{lift}, for any $\gamma_i\in \Gamma(T^*M_i)$,
$i=1,2$, we have
$$
\left\{\begin{array}{l}
\gamma_1^h(\sharp_{\Pi^{^\mu}}(\alpha_1^h))=\Pi^{^\mu}(\alpha_1^h,\gamma_1^h)=\Pi_1(\alpha_1,\gamma_1)^h
=(\gamma_1(\sharp_{\Pi_1}(\alpha_1)))^h=\gamma_1^h([\sharp_{\Pi_1}(\alpha_1)]^h)\\
\gamma_2^v(\sharp_{\Pi^{^\mu}}(\alpha_1^h))=\Pi^{^\mu}(\alpha_1^h,\gamma_2^v)=0=\gamma_2^v([\sharp_{\Pi_1}(\alpha_1)]^h)
\end{array}\right.
$$
and similarly
$$
\left\{\begin{array}{l}
\gamma_1^h(\sharp_{\Pi^{^\mu}}(\alpha_2^v))=\Pi^{^\mu}(\alpha_2^v,\gamma_1^h)=0
=\gamma_1^h(\mu^h[\sharp_{\Pi_2}(\alpha_2)]^v)\\
\gamma_2^v(\sharp_{\Pi^{^\mu}}(\alpha_2^v))=\Pi^{^\mu}(\alpha_2^v,\gamma_2^v)=\mu^h\Pi_2(\alpha_2,\gamma_2)^v
=\gamma_2^v(\mu^h[\sharp_{\Pi_2}(\alpha_2)]^v).
\end{array}\right.
$$
Therefore
$\sharp_{\Pi^{^\mu}}(\alpha)=\sharp_{\Pi^{^\mu}}(\alpha_1^h)+
\sharp_{\Pi^{^\mu}}(\alpha_2^v)=[\sharp_{\Pi_1}(\alpha_1)]^h+\mu^h[\sharp_{\Pi_2}(\alpha_2)]^v$. \smallskip \\
2. Using the assertion 1., the Leibniz identity and Lemma
\ref{lift}, we have
\begin{align*}
\mathcal{L}_{\sharp_{\Pi^{^\mu}}(\alpha)}\beta
&=\mathcal{L}_{[\sharp_{\Pi_1}(\alpha_1)]^h}\beta+\mathcal{L}_{\mu^h[\sharp_{\Pi_2}(\alpha_2)]^v}\beta
\\
&=(\mathcal{L}_{\sharp_{\Pi_1}(\alpha_1)}\beta_1)^h +\mu^h
\mathcal{L}_{[\sharp_{\Pi_2}(\alpha_2)]^v}\beta+\beta([\sharp_{\Pi_2}(\alpha_2)]^v)d\mu^h\\
&=(\mathcal{L}_{\sharp_{\Pi_1}(\alpha_1)}\beta_1)^h+
\mu^h(\mathcal{L}_{\sharp_{\Pi_2}(\alpha_2)}\beta_2)^v
+\Pi_2(\alpha_2,\beta_2)^v(d\mu)^h.
\end{align*}
3. It is a direct consequence of 2.
\end{proof}

The following result provide a necessary and sufficient condition
for the  bivector field $\Pi^{^\mu}$ to be a Poisson tensor.

\begin{theorem}\label{tensor}
Let $(M_1,\Pi_1)$ and $(M_2,\Pi_2)$ be two Poisson manifolds such
that $\Pi_2$ is non trivial and let $\mu$ be a smooth function on
$M_1$. Then $(M_1\times M_2, \Pi^{^\mu})$ is a Poisson manifold if
and only if $\mu$ is a Casimir function.
\end{theorem}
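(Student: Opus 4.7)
My plan is to test the Jacobi identity for the bracket $\{\cdot,\cdot\}$ associated with $\Pi^{^\mu}$ directly on generators. Since the Jacobiator $J(\varphi,\psi,\chi):=\{\{\varphi,\psi\},\chi\}+\{\{\psi,\chi\},\varphi\}+\{\{\chi,\varphi\},\psi\}$ is a tri-derivation in its arguments, it vanishes identically on $C^\infty(M_1\times M_2)$ as soon as it vanishes on every triple whose members are horizontal lifts $\varphi_1^h$ of functions on $M_1$ or vertical lifts $\varphi_2^v$ of functions on $M_2$, since the differentials of such lifts span $T^*(M_1\times M_2)$ pointwise (Lemma \ref{lift}).

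The first step is to compute the elementary brackets from the definition of $\Pi^{^\mu}$:
$$\{\varphi_1^h,\psi_1^h\}=\{\varphi_1,\psi_1\}_1^h,\qquad \{\varphi_2^v,\psi_2^v\}=\mu^h\{\varphi_2,\psi_2\}_2^v,\qquad \{\varphi_1^h,\psi_2^v\}=0,$$
where $\{\cdot,\cdot\}_i$ is the Poisson bracket on $M_i$. The only subtle observation here is that the horizontal lift of $\mu$ appears as a multiplier in the second formula, not as a vertical lift inside the bracket.

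Next, I would evaluate $J$ on the four possible lift-patterns modulo cyclic symmetry. On the purely horizontal triple $(\varphi_1^h,\psi_1^h,\chi_1^h)$ the computation reduces immediately to $J_1(\varphi_1,\psi_1,\chi_1)^h$ and vanishes because $\Pi_1$ is Poisson. On the purely vertical triple $(\varphi_2^v,\psi_2^v,\chi_2^v)$, each outer bracket produces an extra factor of $\mu^h$, but the cross-terms $\{\mu^h,\chi_2^v\}=0$ kill all the mixed contributions, leaving $(\mu^h)^2 J_2(\varphi_2,\psi_2,\chi_2)^v=0$ by the Jacobi identity for $\Pi_2$. On the triple $(\varphi_1^h,\psi_1^h,\chi_2^v)$, every bracket between a horizontal object and $\chi_2^v$ vanishes, so $J\equiv 0$ unconditionally.

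The decisive case, which I expect to be the main (but not hard) obstacle, is the mixed triple $(\varphi_1^h,\psi_2^v,\chi_2^v)$. Here the Leibniz rule forces the cross-term $\{\mu^h,\varphi_1^h\}=\{\mu,\varphi_1\}_1^h$ to appear, and a short computation shows
$$J(\varphi_1^h,\psi_2^v,\chi_2^v)=\{\mu,\varphi_1\}_1^h\,\{\psi_2,\chi_2\}_2^v.$$
The product of lifts on the right vanishes identically on $M_1\times M_2$ iff one of the two factors is identically zero. Because $\Pi_2$ is non trivial, some pair $(\psi_2,\chi_2)$ makes $\{\psi_2,\chi_2\}_2$ nonzero at a point; evaluating at $(p,q)$ near that point for arbitrary $\varphi_1$ then forces $\{\mu,\varphi_1\}_1\equiv 0$, i.e.\ $\mu$ is a Casimir of $\Pi_1$. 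Conversely, if $\mu$ is Casimir then this last expression vanishes and the four cases together give $[\Pi^{^\mu},\Pi^{^\mu}]_S=0$, concluding the equivalence.
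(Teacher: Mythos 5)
Your proof is correct and follows essentially the same route as the paper: the paper evaluates $[\Pi^{^\mu},\Pi^{^\mu}]_S$ on the four types of triples of lifted exact $1$-forms, which by the identity $\oint\{\{\varphi,\phi\},\psi\}=\frac{1}{2}[\Pi,\Pi]_S(d\varphi,d\phi,d\psi)$ is exactly your case analysis of the Jacobiator, and the decisive mixed case yields the same obstruction $[X_{\mu}(\varphi_1)]^h\,\Pi_2(d\psi_2,d\chi_2)^v$. Your concluding argument that non-triviality of $\Pi_2$ forces $X_\mu=0$ matches the paper's conclusion.
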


\begin{proof}
A straightforward calculation using Lemma \ref{lift} shows that
\begin{align*}
[\Pi^{^\mu},\Pi^{^\mu}]_S((d\varphi_1)^h,(d\phi_1)^h,(d\psi_1)^h)&
=([\Pi_1,\Pi_1]_S((d\varphi_1),(d\phi_1),(d\psi_1)))^h\\
[\Pi^{^\mu},\Pi^{^\mu}]_S((d\varphi_2)^v,(d\phi_2)^v,(d\psi_2)^v)&
=(\mu^2)^h([\Pi_2,\Pi_2]_S((d\varphi_2),(d\phi_2),d\psi_2))^v\\
[\Pi^{^\mu},\Pi^{^\mu}]_S((d\varphi_1)^h,(d\phi_1)^h,(d\psi_2)^v)&=0\\
[\Pi^{^\mu},\Pi^{^\mu}]_S((d\varphi_1)^h,(d\phi_2)^v,(d\psi_2)^v)&=
[X_{\mu}(\varphi_1)]^h \Pi_2(d\phi_2,d\psi_2)^v
\end{align*}
for any $\varphi_i,\phi_i,\psi_i\in C^{\infty}(M_i)$, $i=1,2$. Since
$\Pi_1$ and $\Pi_2$ are Poisson tensors, then
$$
[\Pi^{^\mu},\Pi^{^\mu}]_S((d\varphi_1)^h,(d\phi_1)^h,(d\psi_1)^h)=
[\Pi^{^\mu},\Pi^{^\mu}]_S((d\varphi_2)^v,(d\phi_2)^v,(d\psi_2)^v)=0.
$$
Therefore, $\Pi^{^\mu}$ is a Poisson tensor if and only if
$X_\mu=0$.
\end{proof}

\begin{definition}
Let $(M_1,\Pi_1)$ and $(M_2,\Pi_2)$ be two Poisson manifolds such
that $\Pi_2\neq 0$ and let $\mu$ be a Casimir function on
$(M_1,\Pi_1)$. The Poisson tensor $\Pi^{^\mu}$ on $M_1\times M_2$ is
called the warped Poisson tensor relative to $\Pi_1$, $\Pi_2$ and
the warping function $\mu$.
\end{definition}

\begin{corollary}\textbf{(The symplectic case)}
Under the assumptions of Theorem \ref{tensor}. If $\Pi_1$ and
$\Pi_2$ are nondegenerate and $\mu$ is nonvanishing, then
$(M_1\times M_2, \Pi^{^\mu})$ is symplectic if and only if $\mu$ is
essentially constant (i.e. constant on each connected component).
\end{corollary}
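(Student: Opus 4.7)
The plan is to argue both implications, using Theorem \ref{tensor} together with the anchor-map formula from Proposition \ref{bracket}(1). Recall that symplectic means both Poisson and nondegenerate, so in both directions the Poisson condition will already be supplied by Theorem \ref{tensor}, and the real work is to track nondegeneracy of $\Pi^{^\mu}$ against the Casimir condition on $\mu$.

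For the reverse implication, suppose $\mu$ is constant on each connected component of $M_1$. Then $d\mu=0$, so $X_\mu=\sharp_{\Pi_1}(d\mu)=0$, which means $\mu$ is a Casimir function of $(M_1,\Pi_1)$; Theorem \ref{tensor} then gives that $\Pi^{^\mu}$ is a Poisson tensor on $M_1\times M_2$. It remains to verify that $\Pi^{^\mu}$ is nondegenerate at every point $(p,q)$. By Proposition \ref{bracket}(1), for $\alpha=\alpha_1^h+\alpha_2^v$ we have
\[
\sharp_{\Pi^{^\mu}}(\alpha)_{(p,q)}=\bigl[\sharp_{\Pi_1}(\alpha_1)\bigr]^h_{(p,q)}+\mu(p)\bigl[\sharp_{\Pi_2}(\alpha_2)\bigr]^v_{(p,q)}.
\]
The horizontal and vertical components sit in complementary subspaces of $T_{(p,q)}(M_1\times M_2)$, so the vanishing of $\sharp_{\Pi^{^\mu}}(\alpha)_{(p,q)}$ forces $\sharp_{\Pi_1}(\alpha_1)_p=0$ and $\mu(p)\sharp_{\Pi_2}(\alpha_2)_q=0$ separately. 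Since $\Pi_1$ and $\Pi_2$ are nondegenerate and $\mu(p)\neq 0$, we conclude $\alpha_1=0$ and $\alpha_2=0$, i.e.\ $\alpha=0$. Thus $\sharp_{\Pi^{^\mu}}$ is a fiberwise isomorphism, and $\Pi^{^\mu}$ is symplectic.

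For the forward implication, assume $(M_1\times M_2,\Pi^{^\mu})$ is symplectic; in particular it is Poisson, so Theorem \ref{tensor} gives that $\mu$ is a Casimir function of $(M_1,\Pi_1)$, meaning $X_\mu=\sharp_{\Pi_1}(d\mu)=0$. Because $\Pi_1$ is nondegenerate, $\sharp_{\Pi_1}:T^*M_1\to TM_1$ is a bundle isomorphism, so $d\mu=0$ everywhere. Hence $\mu$ is locally constant, i.e.\ constant on each connected component of $M_1$.

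The argument is essentially two applications of Theorem \ref{tensor} wrapped around the observation that nondegeneracy of $\Pi^{^\mu}$ is controlled diagonally by the nondegeneracy of $\Pi_1$ and of $\mu\cdot\Pi_2$. No genuine obstacle arises; the only point requiring a brief verification is the fiberwise splitting of $\sharp_{\Pi^{^\mu}}$ into horizontal and vertical parts, which is immediate from Proposition \ref{bracket}(1) combined with the fact that the horizontal and vertical lifts span complementary subbundles of $T(M_1\times M_2)$.
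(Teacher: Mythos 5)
Your proof is correct and follows essentially the same route as the paper: Theorem \ref{tensor} supplies the Poisson condition, nondegeneracy of $\Pi_1$ identifies Casimir functions with locally constant ones, and nondegeneracy of $\Pi^{^\mu}$ comes from the block decomposition of the anchor map into horizontal and vertical parts. The paper phrases that last step as the rank count $\textrm{rank}\,\Pi^{^\mu}=\textrm{rank}\,\Pi_1+\textrm{rank}\,\mu\Pi_2=\dim(M_1\times M_2)$, which is the same observation as your kernel argument for $\sharp_{\Pi^{^\mu}}$.
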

\begin{proof}
From Theorem \ref{tensor}, $(M_1\times M_2, \Pi^{^\mu})$ is a
Poisson manifold  if and only if $\mu$ is a Casimir function. Since
$\Pi_1$ is nondegenerate, the only Casimir functions on
$(M_1,\Pi_1)$ are the essentially constant functions. Since $\mu$ is
nonvanishing we have $\textrm{rank}\,\mu\Pi_2=\textrm{rank}\,\Pi_2$
and then
$$
\textrm{rank}\,\Pi^{^\mu}=
\textrm{rank}\,\Pi_1+\textrm{rank}\,\mu\Pi_2=\dim M_1 + \dim M_2
=\dim (M_1\times M_2).
$$
Therefore $(M_1\times M_2,\Pi^{^\mu})$ is a Poisson manifold if and
only if it is symplectic.
\end{proof}

\subsection{The warped generalized Poisson bracket}\label{wgPb}

Let $\Pi_1$ and $\Pi_2$ be bivector fields on $M_1$ and $M_2$
respectively. Let $\mathcal{D}^1$ and $\mathcal{D}^2$ be
contravariant connections on $(M_1,\Pi_1)$ and $(M_2,\Pi_2)$
respectively. Let $\mathcal{D}^\mu$
 be the contravariant connection on $M_1\times M_2$ with respect to $\Pi^\mu$
 given by
$$
\mathcal{D}^\mu_{\alpha_1^h}\beta_1^h=(\mathcal{D}^1_{\alpha_1}\beta_1)^h,
\qquad
\mathcal{D}^\mu_{\alpha_2^v}\beta_2^v=\mu^h(\mathcal{D}^2_{\alpha_2}\beta_2)^v
\quad\textrm{ and }\quad  \mathcal{D}^\mu_{\alpha_1^h}\beta_2^v=
\mathcal{D}^\mu_{\alpha_2^v}\beta_1^h=0,
$$
for any $\alpha_i,\beta_i\in \Gamma(T^*M_i)$, $i=1,2$.

\begin{proposition}
\begin{enumerate}
\item Let $\mathcal{T}_i$ be the torsion of $\mathcal{D}^i$,
$i=1,2$, and let $\mathcal{T}_\mu$ be the torsion of
$\mathcal{D}^\mu$. We have
$$
\mathcal{T}_\mu = \mathcal{T}_1^h + \mu^h \mathcal{T}_2^v - (d\mu)^h
\Pi_2^v.
$$
\item Let $\mathcal{R}_i$ be the curvature of $\mathcal{D}^i$,
$i=1,2$ and let $\mathcal{R}_\mu$ be the curvature of
$\mathcal{D}^\mu$. Let $\alpha_i,\beta_i,\gamma_i\in
\Gamma(T^*M_i)$, $i=1,2$, and let $\alpha=\alpha_1^h+\alpha_2^v$,
$\beta=\beta_1^h+\beta_2^v$ and $\gamma=\gamma_1^h+\gamma_2^v$. We
have
$$
\mathcal{R}_\mu(\alpha,\beta)\gamma=
\left[\mathcal{R}_1(\alpha_1,\beta_1)\gamma_1\right]^h + (\mu^2)^h
\left[\mathcal{R}_2(\alpha_2,\beta_2)\gamma_2\right]^v.
$$
\end{enumerate}
\end{proposition}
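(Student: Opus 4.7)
The plan is to reduce both identities to pointwise verifications on pure lifts of $1$-forms, exploiting that $\mathcal{T}_\mu$ and $\mathcal{R}_\mu$ are $C^\infty(M_1\times M_2)$-multilinear in all arguments. By Lemma~\ref{lift}(ii), it suffices to evaluate both sides on arguments each of which is a pure horizontal lift $\alpha_1^h$ or a pure vertical lift $\alpha_2^v$. The two main inputs are the defining relations of $\mathcal{D}^\mu$, which vanishes on mixed pure-lift pairs, and Proposition~\ref{bracket}, which expresses $\sharp_{\Pi^\mu}$ and the Koszul bracket $[\,\cdot,\cdot\,]_{\Pi^\mu}$ on the product factor by factor.

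For the torsion, the four cases obtained by choosing $\alpha,\beta$ among $\alpha_1^h,\alpha_2^v$ are treated as follows. In the pure horizontal case, the definition of $\mathcal{D}^\mu$ together with Proposition~\ref{bracket}.3 (applied with $\alpha_2=\beta_2=0$) combine the three summands into $\mathcal{T}_1(\alpha_1,\beta_1)^h$. In either mixed case all three summands vanish, the Koszul bracket by Proposition~\ref{bracket}.3 with one of $\alpha_i,\beta_i$ equal to zero. In the pure vertical case the connection part contributes $\mu^h(\mathcal{D}^2_{\alpha_2}\beta_2-\mathcal{D}^2_{\beta_2}\alpha_2)^v$ while the bracket contributes $-\mu^h[\alpha_2,\beta_2]_{\Pi_2}^v-\Pi_2(\alpha_2,\beta_2)^v(d\mu)^h$; summing yields precisely $\mu^h\mathcal{T}_2(\alpha_2,\beta_2)^v-(d\mu)^h\Pi_2(\alpha_2,\beta_2)^v$, matching $\mathcal{T}_1^h+\mu^h\mathcal{T}_2^v-(d\mu)^h\Pi_2^v$ evaluated on $(\alpha_2^v,\beta_2^v)$.

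For the curvature, I would perform the analogous case analysis on triples of pure lifts. The pure horizontal triple reduces at once to $[\mathcal{R}_1(\alpha_1,\beta_1)\gamma_1]^h$. The pure vertical triple is the delicate one: using the Leibniz rule for $\mathcal{D}^\mu$ in the second slot one finds $\mathcal{D}^\mu_{\alpha_2^v}\bigl(\mu^h(\mathcal{D}^2_{\beta_2}\gamma_2)^v\bigr)=\mu^h\mathcal{D}^\mu_{\alpha_2^v}(\mathcal{D}^2_{\beta_2}\gamma_2)^v+\sharp_{\Pi^\mu}(\alpha_2^v)(\mu^h)(\mathcal{D}^2_{\beta_2}\gamma_2)^v$, and the second summand vanishes because the vertical vector $\sharp_{\Pi^\mu}(\alpha_2^v)=\mu^h[\sharp_{\Pi_2}(\alpha_2)]^v$ annihilates the horizontal function $\mu^h$. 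Decomposing $[\alpha_2^v,\beta_2^v]_{\Pi^\mu}$ via Proposition~\ref{bracket}.3 and using $\mathcal{D}^\mu_{(d\mu)^h}\gamma_2^v=0$, everything collapses to $(\mu^2)^h[\mathcal{R}_2(\alpha_2,\beta_2)\gamma_2]^v$. The mixed triples are then shown to vanish by combining $[\alpha_1^h,\beta_2^v]_{\Pi^\mu}=0$, the mixed-vanishing rule of $\mathcal{D}^\mu$, and the Casimir hypothesis on $\mu$, which kills the Leibniz corrections $\sharp_{\Pi_1}(\alpha_1)(\mu)^h$ arising when $\mu^h$ is pulled through $\mathcal{D}^\mu$.

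The main subtlety is purely bookkeeping rather than any deep obstacle: the extra $\Pi_2(\alpha_2,\beta_2)^v(d\mu)^h$ summand in the Koszul bracket produces exactly the $-(d\mu)^h\Pi_2^v$ correction in $\mathcal{T}_\mu$, and in the curvature it is neutralized by $\mathcal{D}^\mu_{(d\mu)^h}$ vanishing on vertical lifts. The Leibniz corrections introduced by pulling the warping factor $\mu^h$ through $\mathcal{D}^\mu$ vanish either automatically (vertical--vertical case, because vertical vectors kill horizontal functions) or by the Casimir property, leaving only the expected decomposition $[\mathcal{R}_1]^h+(\mu^2)^h[\mathcal{R}_2]^v$.
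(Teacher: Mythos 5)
Your strategy---reducing both identities to a case analysis on pure horizontal and vertical lifts, using the tensoriality of $\mathcal{T}_\mu$ and $\mathcal{R}_\mu$, the defining relations of $\mathcal{D}^\mu$ and Proposition~\ref{bracket}.3---is exactly the computation that the paper's one-line proof points to, and your treatment of the torsion and of the two pure triples of the curvature is correct. But there are two problems in the curvature part. First, you invoke ``the Casimir hypothesis on $\mu$'' to kill the Leibniz correction $[\sharp_{\Pi_1}(\alpha_1)(\mu)]^h(\mathcal{D}^2_{\beta_2}\gamma_2)^v$ arising in $\mathcal{R}_\mu(\alpha_1^h,\beta_2^v)\gamma_2^v$. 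You are right that this correction appears and that it vanishes precisely when $\sharp_{\Pi_1}(d\mu)=0$, but no such hypothesis is in the statement (the subsection only assumes $\mu\in C^\infty(M_1)$), so you must either flag it explicitly as an added assumption or accept that the formula fails without it.

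Second, and more seriously, your dispatch of the ``mixed triples'' misses the case $\mathcal{R}_\mu(\alpha_2^v,\beta_2^v)\gamma_1^h$. There the first two terms of the curvature vanish, but the Koszul bracket in the lower slot is $[\alpha_2^v,\beta_2^v]_{\Pi^{^\mu}}=\mu^h[\alpha_2,\beta_2]_{\Pi_2}^v+\Pi_2(\alpha_2,\beta_2)^v(d\mu)^h$, and by $C^\infty$-linearity in that slot
$$
\mathcal{D}^\mu_{[\alpha_2^v,\beta_2^v]_{\Pi^{^\mu}}}\gamma_1^h
=\Pi_2(\alpha_2,\beta_2)^v\,(\mathcal{D}^1_{d\mu}\gamma_1)^h,
$$
so that $\mathcal{R}_\mu(\alpha_2^v,\beta_2^v)\gamma_1^h=-\Pi_2(\alpha_2,\beta_2)^v(\mathcal{D}^1_{d\mu}\gamma_1)^h$. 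The neutralization you describe---that $\mathcal{D}^\mu_{(d\mu)^h}$ vanishes on vertical lifts---only applies when the third argument is vertical, and the Casimir property controls $\sharp_{\Pi_1}(d\mu)$, not $\mathcal{D}^1_{d\mu}$, which for a general contravariant connection $\mathcal{D}^1$ need not vanish even when $\mu$ is Casimir. Hence this term survives, and the asserted decomposition of $\mathcal{R}_\mu$ holds only under a further hypothesis such as $\mathcal{D}^1_{d\mu}=0$ (in particular when $d\mu=0$, which is also what makes the $-(d\mu)^h\Pi_2^v$ correction in $\mathcal{T}_\mu$ disappear). You should either add such a hypothesis or record the extra term; as written, the case analysis is incomplete at exactly the point where the identity is delicate.
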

\begin{proof}
Use the definition of $\mathcal{D}^\mu$ and 3. of Proposition
\ref{bracket}.
\end{proof}

Therefore, if $\mu$ is nonzero, $\mathcal{D}^\mu$ is flat if and
only if $\mathcal{D}^1$ and $\mathcal{D}^2$ are flat and, if
$d\mu=0$, $\mathcal{D}^\mu$ is torsion-free if and only if
$\mathcal{D}^1$ and $\mathcal{D}^2$ are.

\begin{proposition}\label{generalized bracket}
Let $\{.,.\}_1$, $\{.,.\}_2$ and $\{.,.\}_\mu$ be the generalized
pre-Poisson brackets associated with $\mathcal{D}^1$,
$\mathcal{D}^2$ and $\mathcal{D}^\mu$ respectively. We have :
$$
\{\omega_1^h,\eta_1^h\}_\mu= \{\omega_1,\eta_1\}_1^h, \qquad
\{\omega_2^v,\eta_2^v\}_\mu= \mu^h\{\omega_2,\eta_2\}_2^v
\quad\textrm{ and }\quad \{\omega_1^h,\eta_2^v\}_\mu=0,
$$
for any differential forms $\omega_i,\eta_i\in \Omega^*(M_i)$,
$i=1,2$.
\end{proposition}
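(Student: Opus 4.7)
The plan is to use Hawkins' axiomatic characterisation of the generalised pre-Poisson bracket: $\{.,.\}_\mu$ is uniquely determined by the defining formula on functions $\{\varphi,\eta\}_\mu=\mathcal{D}^\mu_{d\varphi}\eta$ together with graded antisymmetry \eqref{antisym}, the $d$-derivation rule \eqref{differentialbracket}, and the graded product rule \eqref{productrule}. I therefore first verify the three identities when $\omega_i$ and $\eta_j$ are functions, and then propagate them to arbitrary degree by induction on the total degree.

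\textbf{Base case.} For $\varphi_i,\psi_j\in C^\infty(M_i)$, Lemma \ref{lift}(2) gives $d(\varphi_i^l)=(d\varphi_i)^l$, so
$$\{\varphi_i^l,\psi_j^{l'}\}_\mu=\mathcal{D}^\mu_{(d\varphi_i)^l}\psi_j^{l'}=\sharp_{\Pi^\mu}\bigl((d\varphi_i)^l\bigr)(\psi_j^{l'}).$$
Proposition \ref{bracket}(1) expresses $\sharp_{\Pi^\mu}$ on horizontal/vertical $1$-forms as $[\sharp_{\Pi_1}(d\varphi_1)]^h$ and $\mu^h[\sharp_{\Pi_2}(d\varphi_2)]^v$ respectively, and Lemma \ref{lift}(1) evaluates a lifted vector field on a lifted function. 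The three cases then give $\{\varphi_1^h,\psi_1^h\}_\mu=(\sharp_{\Pi_1}(d\varphi_1)(\psi_1))^h=\{\varphi_1,\psi_1\}_1^h$, $\{\varphi_2^v,\psi_2^v\}_\mu=\mu^h\{\varphi_2,\psi_2\}_2^v$, and $\{\varphi_1^h,\psi_2^v\}_\mu=[\sharp_{\Pi_1}(d\varphi_1)]^h(\psi_2^v)=0$, the last vanishing since $\psi_2^v$ is constant along $M_1$-slices while the horizontal lift is tangent to them.

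\textbf{Induction on degree.} Locally every differential form on $M_i$ is a sum of terms $f\,d\varphi^{(1)}\wedge\cdots\wedge d\varphi^{(k)}$, and by Lemma \ref{lift}(2) the horizontal/vertical lifts commute with both $d$ and $\wedge$. Hence each lifted form is locally a wedge polynomial in lifted functions and their differentials. Starting from the base case, the $d$-derivation rule \eqref{differentialbracket} transforms an identity whose arguments have degrees $(p,q)$ into one with degrees $(p+1,q)$ or $(p,q+1)$; the product rule \eqref{productrule} assembles wedges; and graded antisymmetry \eqref{antisym} converts identities involving the second argument into identities involving the first. A straightforward induction then extends the three identities from functions to forms of arbitrary degree.

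\textbf{Main obstacle.} The only real delicacy is ensuring that the horizontal/vertical splitting is preserved throughout the induction, since the axioms could in principle produce horizontal-vertical cross terms. The relevant closure properties are: $d\omega_1^h=(d\omega_1)^h$ stays horizontal and $\omega_1^h\wedge\eta_1^h=(\omega_1\wedge\eta_1)^h$ (and likewise for vertical lifts); the warping factor $\mu^h$ is purely horizontal, so it commutes through vertical wedges and $d(\mu^h)=(d\mu)^h$ only contributes a horizontal term; and by Proposition \ref{bracket}(1) together with Lemma \ref{lift}(1), the anchor of a horizontal $1$-form annihilates vertical scalars and vice versa. These facts guarantee that the axioms never couple horizontal to vertical, so the mixed identity $\{\omega_1^h,\eta_2^v\}_\mu=0$ persists, and the warping factor $\mu^h$ remains outside the vertical wedges at every step of the induction.
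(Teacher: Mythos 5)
Your overall strategy (a base case plus propagation through Hawkins' axioms) matches the paper's, but your base case is too small, and this opens a genuine gap in the induction. You verify the three identities only when both arguments are functions, and then assert that the $d$-derivation rule (\ref{differentialbracket}) ``transforms an identity whose arguments have degrees $(p,q)$ into one with degrees $(p+1,q)$ or $(p,q+1)$.'' It does not: applied to $\{\varphi,\chi\}_\mu$ it yields only the combination $\{d\varphi,\chi\}_\mu+\{\varphi,d\chi\}_\mu$, and combining this with the antisymmetry (\ref{antisym}) produces the tautology $d\{\varphi,\chi\}_\mu+d\{\chi,\varphi\}_\mu=0$, so the individual bracket $\{\varphi,d\chi\}_\mu$ is never determined. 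This is not a technicality: the bracket of a function with a $1$-form is precisely the extra data carried by the connection $\mathcal{D}^\mu$ beyond the bivector $\Pi^{^\mu}$ --- two different contravariant connections over the same bivector give the same function--function brackets but different function--form brackets --- so no manipulation of the axioms alone can recover it from the function--function case.

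The fix, which is how the paper proceeds, is to take as base case the bracket of a function with an \emph{arbitrary} differential form, $\{\varphi_1^h,\eta_1^h\}_\mu=\mathcal{D}^\mu_{(d\varphi_1)^h}\eta_1^h$, and to verify directly from the definition of $\mathcal{D}^\mu$ on lifted $1$-forms together with its extension to higher-degree forms via (\ref{connection champ}) and (\ref{connection forms}) that this equals $(\mathcal{D}^1_{d\varphi_1}\eta_1)^h$ (likewise for the vertical and mixed cases). Once the second argument is arbitrary, the $d$-rule becomes usable, since in $\{d\varphi,\eta\}_\mu=d\{\varphi,\eta\}_\mu-\{\varphi,d\eta\}_\mu$ both right-hand terms are already known; one then passes to $\varphi\,d\psi$, to general $1$-forms, to wedges of $1$-forms, and inducts on the degree of the \emph{first} argument only. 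Your closure observations about preserving the horizontal/vertical splitting are correct and are indeed needed, but they address a secondary point; the missing ingredient is the direct computation of $\mathcal{D}^\mu_{(d\varphi_i)^l}$ on lifted forms of arbitrary degree.
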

\begin{proof}
Let us first, once and for all, say that all along the computations
we use Lemma \ref{lift}. Now, by (\ref{connection champ}),
(\ref{connection forms}) and (\ref{bracketconnection}), we get for a
function and a differential form:
$$
\{\varphi_1^h,\eta_1^h\}_\mu=\{\varphi_1,\eta_1\}_1^h, \qquad
\{\varphi_2^v,\eta_2^v\}_\mu=\mu^h\{\varphi_2,\eta_2\}_2^v
$$
and
$$
\{\varphi_1^h,\eta_2^v\}_\mu=\{\varphi_2^v,\eta_1^h\}_\mu=0.
$$
By the Leibniz identity (\ref{differentialbracket}) and using the
identities above, we get for an exact $1$-form and a differential
form :
$$
\{(d\varphi_1)^h,\eta_1^h\}_\mu=\{d\varphi_1,\eta_1\}_1^h, \qquad
\{(d\varphi_2)^v,\eta_2^v\}_\mu=\mu^h\{d\varphi_2,\eta_2\}_2^v
$$
and
$$
\{(d\varphi_1)^h,\eta_2^v\}_\mu=\{(d\varphi_2)^v,\eta_1^h\}_\mu=0.
$$
Using the antisymmetry (\ref{antisym}), the product identity
(\ref{productrule}), and the identities above we get
$$
\{(\varphi_1d\psi_1)^h,\eta_1^h\}_\mu=\{\varphi_1d\psi_1,\eta_1\}_1^h,
\qquad
\{(\varphi_2d\psi_2)^v,\eta_2^v\}_\mu=\mu^h\{\varphi_2d\psi_2,\eta_2\}_2^v
$$
and
$$
\{(\varphi_1d\psi_1)^h,\eta_2^v\}_\mu=\{(\varphi_2d\psi_2)^v,\eta_1^h\}_\mu=0,
$$
thus for a $1$-form and any differential form we have
$$
\{\alpha_1^h,\eta_1^h\}_\mu=\{\alpha_1,\eta_1\}_1^h, \qquad
\{\alpha_2^v,\eta_2^v\}_\mu=\mu^h\{\alpha_2,\eta_2\}_2^v
$$
and
$$
\{\alpha_1^h,\eta_2^v\}_\mu=\{\alpha_2^v,\eta_1^h\}_\mu=0.
$$
Using again the antisymmetry (\ref{antisym}), the product identity
(\ref{productrule}) and the identities above we get for two
$1$-forms and any differential form :
$$
\{(\alpha_1\wedge
\beta_1)^h,\eta_1^h\}_\mu=\{\alpha_1\wedge\beta_1,\eta_1\}_1^h,
\qquad \{(\alpha_2\wedge
\beta_2)^v,\eta_2^v\}_\mu=\mu^h\{\alpha_2\wedge \beta_2,\eta_2\}_2^v
$$
and
$$
\{(\alpha_1\wedge\beta_1)^h,\eta_2^v\}_\mu=\{(\alpha_2\wedge\beta_2)^v,\eta_1^h\}_\mu=0.
$$
Now, by induction we get the identities of the proposition.
\end{proof}

\begin{corollary}\label{generalized Poisson}
If $\mu$ is nonzero, the bracket $\{.,.\}_\mu$ is a generalized
Poisson bracket if and only if the brackets $\{.,.\}_1$ and
$\{.,.\}_2$ are.
\end{corollary}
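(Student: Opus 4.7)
The plan is to verify the graded Jacobi identity for $\{.,.\}_\mu$ on triples of generators, using Proposition~\ref{generalized bracket}. Since the Jacobiator
\[
J_\mu(\omega,\eta,\lambda):=\{\{\omega,\eta\}_\mu,\lambda\}_\mu-\{\omega,\{\eta,\lambda\}_\mu\}_\mu+(-1)^{\deg\omega\cdot\deg\eta}\{\eta,\{\omega,\lambda\}_\mu\}_\mu
\]
is itself a graded derivation in each argument (an algebraic consequence of (\ref{antisym}) and (\ref{productrule})), and since horizontal and vertical lifts locally generate $\Omega^{*}(M_1\times M_2)$ (Lemma~\ref{lift}), it suffices to check $J_\mu=0$ on triples of such lifts.

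For an all-horizontal triple $(\omega_1^h,\eta_1^h,\lambda_1^h)$, iterating the identity $\{\omega_1^h,\eta_1^h\}_\mu=\{\omega_1,\eta_1\}_1^h$ from Proposition~\ref{generalized bracket} gives $J_\mu(\omega_1^h,\eta_1^h,\lambda_1^h)=J_1(\omega_1,\eta_1,\lambda_1)^h$, so this case matches the graded Jacobi identity for $\{.,.\}_1$ on $M_1$ exactly. The delicate case is an all-vertical triple $(\omega_2^v,\eta_2^v,\lambda_2^v)$: the identity $\{\omega_2^v,\eta_2^v\}_\mu=\mu^h\{\omega_2,\eta_2\}_2^v$ introduces a horizontal factor $\mu^h$ which must be carried through the outer bracket. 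The key is the vanishing $\{\mu^h,\lambda_2^v\}_\mu=0$, obtained by applying the third identity of Proposition~\ref{generalized bracket} to the horizontal function $\mu^h$ and the vertical form $\lambda_2^v$. Expanding via antisymmetry~(\ref{antisym}) and the product rule~(\ref{productrule}), the factor $\mu^h$ pulls cleanly through each outer bracket and one arrives at
\[
J_\mu(\omega_2^v,\eta_2^v,\lambda_2^v)=(\mu^2)^h\,J_2(\omega_2,\eta_2,\lambda_2)^v,
\]
where $J_2$ is the Jacobiator of $\{.,.\}_2$. Mixed triples (containing both horizontal and vertical lifts) are handled by a term-by-term expansion using the same ingredients; since the $\mu$-bracket between a horizontal and a vertical lift is already known by Proposition~\ref{generalized bracket}, each of the three terms in $J_\mu$ reduces to an expression that can be checked directly and shown to produce no new obstruction.

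Putting the three cases together, $J_\mu\equiv 0$ on $\Omega^{*}(M_1\times M_2)$ is equivalent to $J_1\equiv 0$ on $M_1$ together with $(\mu^2)^h\,J_2^v\equiv 0$ on $M_1\times M_2$. The first is exactly the graded Jacobi identity for $\{.,.\}_1$. For the second, the hypothesis $\mu\not\equiv 0$ implies that the open set $U=\{p\in M_1:\mu(p)\neq 0\}$ is nonempty; on $U\times M_2$ the function $(\mu^2)^h$ is strictly positive, so evaluation at any $(p,q)\in U\times M_2$ forces $J_2(\omega_2,\eta_2,\lambda_2)(q)=0$, and as $q\in M_2$ is arbitrary this yields $J_2\equiv 0$ on $M_2$, i.e., the graded Jacobi identity for $\{.,.\}_2$. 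This gives the claimed equivalence.

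The main obstacle is the all-vertical case: carrying the scalar factor $\mu^h$ through nested brackets demands careful use of antisymmetry and the product rule, together with attention to graded signs. The decisive ingredient is the \emph{centrality} of $\mu^h$ with respect to $\{.,.\}_\mu$ against vertical forms, which is hardwired into the definition of $\mathcal{D}^\mu$ via $\mathcal{D}^\mu_{\alpha_1^h}\beta_2^v=0$.
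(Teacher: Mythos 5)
Your strategy is the same as the paper's (whose proof of this corollary is a one-line appeal to Proposition \ref{generalized bracket}), and your treatment of the pure horizontal and pure vertical triples is correct; in particular, the observation that $\{\mu^h,\lambda_2^v\}_\mu=\mathcal{D}^\mu_{(d\mu)^h}\lambda_2^v=0$ is exactly what lets the factor $\mu^h$ pass through the outer bracket in the all-vertical case. The genuine gap is the sentence dismissing the mixed triples as producing ``no new obstruction'': they do. Take $\omega=\omega_1^h$, $\eta=\eta_2^v$, $\lambda=\lambda_2^v$. The first and third terms of the Jacobiator vanish because $\{\omega_1^h,\eta_2^v\}_\mu=\{\omega_1^h,\lambda_2^v\}_\mu=0$, but the middle term is
$$
-\{\omega_1^h,\{\eta_2^v,\lambda_2^v\}_\mu\}_\mu
=-\{\omega_1^h,\mu^h\wedge\{\eta_2,\lambda_2\}_2^v\}_\mu
=-\{\omega_1^h,\mu^h\}_\mu\wedge\{\eta_2,\lambda_2\}_2^v
=(\mathcal{D}^1_{d\mu}\omega_1)^h\wedge\{\eta_2,\lambda_2\}_2^v,
$$
using the product rule (\ref{productrule}), the vanishing of $\{\omega_1^h,\{\eta_2,\lambda_2\}_2^v\}_\mu$, and $\{\omega_1^h,\mu^h\}_\mu=-\{\mu^h,\omega_1^h\}_\mu=-(\mathcal{D}^1_{d\mu}\omega_1)^h$. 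This does not vanish in general. For $\omega_1=\varphi_1$ a function it equals $X_\mu(\varphi_1)^h\{\eta_2,\lambda_2\}_2^v$, which is consistent with Theorem \ref{tensor}: already the degree-zero part of the graded Jacobi identity for $\{.,.\}_\mu$ forces $\mu$ to be a Casimir function of $\Pi_1$ whenever $\Pi_2\neq 0$. Hence the equivalence you assert at the end ($J_\mu\equiv 0$ iff $J_1\equiv 0$ and $(\mu^2)^hJ_2^v\equiv 0$) omits a third, independent condition, namely $(\mathcal{D}^1_{d\mu}\omega_1)^h\wedge\{\eta_2,\lambda_2\}_2^v\equiv 0$, and the ``if'' direction of the corollary fails as soon as $\mathcal{D}^1_{d\mu}\neq 0$ and $\{.,.\}_2$ is nontrivial.

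To be fair, the paper's own one-line proof has the same lacuna; the corollary is only invoked later, in the metaflatness theorem, under the hypotheses of Theorem \ref{pseudo-Riemann poisson}, where $\mu$ is essentially constant, so $d\mu=0$, the obstruction above vanishes identically, and your argument then closes. A correct self-contained proof should either add the hypothesis $d\mu=0$ (or at least $\mathcal{D}^1_{d\mu}=0$) or state the mixed-type obstruction explicitly as part of the equivalence.
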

\begin{proof}
Indeed, by the proposition above we can see that the bracket
$\{.,.\}_\mu$ satisfy the graded Jacobi identity (\ref{gen bracket
jacobi}) if and only if the two brackets $\{.,.\}_1$ and $\{.,.\}_2$
do.
\end{proof}

\subsection{Other remarkable Poisson tensors on a product manifold}

\begin{proposition}
Let $\Pi_1$ and $\Pi_2$ be two bivector fields on $M_1$ and $M_2$
respectively. Let $f_1$ and $f_2$ be smooth functions on $M_1$ and
$M_2$ respectively and let $X_{f_1}=\sharp_{\Pi_1}(df_1)$ and
$X_{f_2}=\sharp_{\Pi_2}(df_2)$ be the corresponding Hamiltonian
fields. The bivector field $\Pi_{f_1,f_2}= X_{f_1}^h \wedge
X_{f_2}^v $ is a Poisson tensor on $M_1\times M_2$.
\end{proposition}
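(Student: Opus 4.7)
The plan is to verify the Jacobi identity for $\Pi_{f_1,f_2}$ via the Schouten--Nijenhuis criterion, i.e.\ to show $[\Pi_{f_1,f_2},\Pi_{f_1,f_2}]_S=0$, by exploiting the fact that $\Pi_{f_1,f_2}$ is decomposable as $X_{f_1}^h\wedge X_{f_2}^v$.

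First I would record the standard identity for decomposable bivectors: for any two vector fields $X$ and $Y$ on a manifold,
$$
[X\wedge Y,\, X\wedge Y]_S \;=\; 2\,[X,Y]\wedge X\wedge Y.
$$
This follows from expanding $[X_1\wedge X_2,\, Y_1\wedge Y_2]_S = \sum_{i,j}(-1)^{i+j}[X_i,Y_j]\wedge \widehat{X_i}\wedge \widehat{Y_j}$ with $X_1=Y_1=X$, $X_2=Y_2=Y$ and using $[X,X]=[Y,Y]=0$ together with $Y\wedge X=-X\wedge Y$. Consequently, a decomposable bivector $X\wedge Y$ is Poisson as soon as $[X,Y]$ lies in the span of $X$ and $Y$, and certainly as soon as $[X,Y]=0$.

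Second, I apply this with $X=X_{f_1}^h$ and $Y=X_{f_2}^v$, which reduces the problem to showing $[X_{f_1}^h, X_{f_2}^v]=0$. This is immediate from Lemma \ref{lift}(1)(i): taking the ambient vector field on $M_1\times M_2$ to be $X_{f_1}^h$ (so its vertical component vanishes, $X_2=0$) and bracketing it against $X_{f_2}^v$ (the case $(i,l)=(2,v)$) yields
$$
[X_{f_1}^h, X_{f_2}^v]=[0,X_{f_2}]^v=0.
$$
Plugging this back into the Schouten formula gives $[\Pi_{f_1,f_2},\Pi_{f_1,f_2}]_S = 2\cdot 0\wedge X_{f_1}^h\wedge X_{f_2}^v = 0$, so $\Pi_{f_1,f_2}$ is Poisson.

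There is no real obstacle in this argument: both ingredients (the Schouten formula for decomposable bivectors and the commutativity of purely horizontal and purely vertical lifts) are standard. In fact, the Hamiltonian nature of $X_{f_1}$ and $X_{f_2}$ plays no role in the proof whatsoever; the same argument shows that $X_1^h\wedge X_2^v$ is a Poisson tensor for any vector fields $X_1\in\Gamma(TM_1)$ and $X_2\in\Gamma(TM_2)$.
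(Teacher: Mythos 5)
Your proof is correct and follows essentially the same route as the paper: both reduce $[\Pi_{f_1,f_2},\Pi_{f_1,f_2}]_S$ to $2\,[X_{f_1}^h,X_{f_2}^v]\wedge X_{f_1}^h\wedge X_{f_2}^v$ via the Schouten--Nijenhuis identity for decomposable bivectors and then invoke Lemma \ref{lift} to get $[X_{f_1}^h,X_{f_2}^v]=0$. Your closing remark that the Hamiltonian nature of the factors is irrelevant is a correct (and slightly more explicit) observation, but it does not change the argument.
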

\begin{proof}
Using the properties of the Schouten-Nijenhuis bracket we get
$$
[\Pi_{f_1,f_2},\Pi_{f_1,f_2}]_S=[X_{f_1}^h \wedge X_{f_2}^v,
X_{f_1}^h \wedge X_{f_2}^v]_S= 2 \Pi_{f_1,f_2} \wedge [X_{f_1}^h,
X_{f_2}^v],
$$
and then, from Lemma \ref{lift}, we deduce that
$[\Pi_{f_1,f_2},\Pi_{f_1,f_2}]_S=0$.
\end{proof}

\begin{proposition}
Let $(M_1, \Pi_1)$ and $(M_2, \Pi_2)$ be two Poisson manifolds and
let $f_i,\mu_i\in C^\infty(M_i)$, $i= 1,2$. Let $\Pi_{f_1,f_2}$ be
the Poisson tensor given in the proposition above. If $\mu_1$ and
$\mu_2$ are Casimir functions, then the bivector field
$$
\Lambda= \mu_2^v\Pi_1^h + \mu_1^h\Pi_2^v+\mu_1^h\mu_2^v
\Pi_{f_1,f_2}
$$
is a Poisson tensor on $M_1\times M_2$.
\end{proposition}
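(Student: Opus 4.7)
My plan is to verify $[\Lambda,\Lambda]_S = 0$ directly by decomposing $\Lambda$ into three pieces and using the Casimir hypotheses on $\mu_1,\mu_2$ to force every term in the expansion to vanish. I would set $A = \mu_2^v\Pi_1^h$, $B = \mu_1^h\Pi_2^v$, $C = \mu_1^h\mu_2^v\Pi_{f_1,f_2}$, so that $\Lambda = A+B+C$; since the Schouten bracket of two bivectors is symmetric, $[\Lambda,\Lambda]_S$ splits into the six summands $[A,A]_S$, $[B,B]_S$, $[C,C]_S$, $2[A,B]_S$, $2[A,C]_S$, $2[B,C]_S$, each of which I treat using the Leibniz-type identity
$$
[fP,gQ]_S \;=\; fg\,[P,Q]_S \;+\; f\,\sharp_P(dg)\wedge Q \;+\; g\,\sharp_Q(df)\wedge P
$$
valid for smooth $f,g$ and bivector fields $P,Q$.

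The key observation is that the Casimir hypothesis, combined with the horizontal/vertical splitting, produces six uniform vanishings of anchor images:
$$
\sharp_{\Pi_1^h}(d\mu_1^h)=\sharp_{\Pi_1^h}(d\mu_2^v)=\sharp_{\Pi_2^v}(d\mu_1^h)=\sharp_{\Pi_2^v}(d\mu_2^v)=\sharp_{\Pi_{f_1,f_2}}(d\mu_1^h)=\sharp_{\Pi_{f_1,f_2}}(d\mu_2^v)=0.
$$
The two off-diagonal cases come from Lemma \ref{lift}; the two diagonal cases from $X_{\mu_i}=0$ together with Proposition \ref{bracket}; and the last two from a direct evaluation of $\Pi_{f_1,f_2} = X_{f_1}^h\wedge X_{f_2}^v$ against $d\mu_i^{(\cdot)}$, which produces the factor $X_{f_i}(\mu_i) = \{f_i,\mu_i\}_i = 0$. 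Together with the product rule for $d(\mu_1^h\mu_2^v)$, these vanishings kill every inhomogeneous $f\,\sharp_P(dg)\wedge Q$ contribution in the six summands.

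What then remains are the purely quadratic terms $fg\,[P,Q]_S$ with $P,Q\in\{\Pi_1^h,\Pi_2^v,\Pi_{f_1,f_2}\}$, reducing the problem to pairwise Schouten-commutativity of the three building blocks. The diagonal vanishings $[\Pi_1^h,\Pi_1^h]_S = [\Pi_2^v,\Pi_2^v]_S = 0$ follow from the Poisson hypothesis on $\Pi_i$ and Lemma \ref{lift}; $[\Pi_{f_1,f_2},\Pi_{f_1,f_2}]_S = 0$ is the previous proposition; and $[\Pi_1^h,\Pi_2^v]_S = 0$ is immediate from the proof of Theorem \ref{tensor} taken with $\mu\equiv 1$. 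For the two remaining cross-brackets $[\Pi_1^h,\Pi_{f_1,f_2}]_S$ and $[\Pi_2^v,\Pi_{f_1,f_2}]_S$ I would apply the graded derivation rule
$$
[\Pi_i^l,\,X_{f_1}^h\wedge X_{f_2}^v]_S \;=\; [\Pi_i^l,X_{f_1}^h]_S\wedge X_{f_2}^v \;-\; X_{f_1}^h\wedge [\Pi_i^l,X_{f_2}^v]_S,
$$
and observe that each $[\Pi_i^l,X_{f_j}^m]_S = -\mathcal{L}_{X_{f_j}^m}\Pi_i^l$ is either a lift of the identically vanishing $\mathcal{L}_{X_{f_i}}\Pi_i$ (when the two lift types match) or is zero by Lemma \ref{lift} (when they do not, since a purely vertical vector field has trivial Lie derivative on a purely horizontal bivector and symmetrically).

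The main obstacle is bookkeeping rather than any genuinely difficult calculation: once the two unifying observations — that $d\mu_1^h$ and $d\mu_2^v$ lie in the kernel of every anchor map appearing in $\Lambda$, and that the three building-block bivectors pairwise Schouten-commute — are in place, the six summands collapse one by one and $[\Lambda,\Lambda]_S = 0$ follows.
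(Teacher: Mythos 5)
Your argument is correct, and it takes a genuinely different route from the paper. The paper never touches the Schouten bracket of $\Lambda$ with itself: it introduces the bracket $\{\varphi,\psi\}=\Lambda(d\varphi,d\psi)$, computes it on horizontal and vertical lifts of functions ($\{\varphi_1^h,\psi_1^h\}=\mu_2^v\{\varphi_1,\psi_1\}_1^h$, $\{\varphi_2^v,\psi_2^v\}=\mu_1^h\{\varphi_2,\psi_2\}_2^v$, $\{\varphi_1^h,\varphi_2^v\}=\mu_1^h\mu_2^v\{f_1,\varphi_1\}_1^h\{f_2,\varphi_2\}_2^v$), and then verifies the Jacobi identity by evaluating four cyclic sums explicitly, watching the leftover terms die because they all carry a factor $\{f_i,\mu_i\}_i$ or $\{\mu_i,\cdot\}_i$, which vanish by the Casimir hypothesis. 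Your route instead expands $[\Lambda,\Lambda]_S$ into the six pairwise Schouten brackets of $A=\mu_2^v\Pi_1^h$, $B=\mu_1^h\Pi_2^v$, $C=\mu_1^h\mu_2^v\Pi_{f_1,f_2}$ via the Leibniz identity for $[fP,gQ]_S$, and isolates two clean structural facts: every anchor in sight annihilates $d\mu_1^h$ and $d\mu_2^v$ (the Casimir hypothesis enters only here, and only through $X_{\mu_i}=0$ and $X_{f_i}(\mu_i)=\{f_i,\mu_i\}_i=0$ --- exactly the same scalars that cancel in the paper's cyclic sums), and the three building blocks pairwise Schouten-commute (using $\mathcal L_{X_{f_i}}\Pi_i=0$, which is where the Poisson hypothesis on $\Pi_i$ is genuinely used, plus the lift lemma for mixed types). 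What your approach buys is a tensorial argument that makes the role of each hypothesis transparent and sidesteps the implicit step in the paper's proof that checking Jacobi on lifts of functions suffices; what the paper's approach buys is that it stays entirely within the elementary function-bracket calculus already set up for Theorem \ref{tensor} and requires no Schouten-calculus identities beyond the definition. One small caution: fix a sign convention for $[P,f]_S$ versus $i_{df}P$ before writing the Leibniz identity down in final form --- it does not affect your conclusion here since every inhomogeneous term vanishes identically, but the formula as stated is convention-dependent.
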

\begin{proof}
For any $\varphi_i,\psi_i\in C^\infty(M_i)$, $i=1,2$, using Lemma
\ref{lift} we can easily verify that
$$
\Pi_{f_1,f_2}(d\varphi_1^h,d\psi_1^h)=0,\qquad
\Pi_{f_1,f_2}(d\varphi_2^v,d\psi_2^v)=0 $$ and
$$
\Pi_{f_1,f_2}(d\varphi_1^h,d\varphi_2^v)=\{f_1,\varphi_1\}_1^h\{f_2,\varphi_2\}_2^v.
$$
If for $\varphi,\psi\in C^{\infty}(M_1\times M_2)$ we put
$\{\varphi,\psi\}=\Lambda(d\varphi,d\psi)$, we deduce using the
identities above that
$$
\{\varphi_1^h,\psi_1^h\}=\mu_2^v \{\varphi_1,\psi_1\}_1^h, \quad
\{\varphi_2^v,\psi_2^v\}=\mu_1^h\{\varphi_2,\psi_2\}_2^v
$$
and
$$
\{\varphi_1^h,\varphi_2^v\}=\mu_1^h\mu_2^v\{f_1,\varphi_1\}_1^h\{f_2,\varphi_2\}_2^v.
$$
To prove that the bivector field $\Lambda$ is a Poisson tensor, we
need to prove that the bracket $\{.,.\}$ satisfies the Jacobi
identity. Let $\varphi_i,\phi_i,\psi_i\in C^{\infty}(M_i)$, $i=1,2$.
Using the above identities and the Leibniz identity we get
$$
\{\{\varphi_1^h,\phi_1^h\},\psi_1^h\}
=-\mu_1^h\mu_2^v\{f_2,\mu_2\}_2^v\{\varphi_1,\phi_1\}_1^h\{f_1,\psi_1\}_1^h
+(\mu_2^v)^2\{\{\varphi_1,\phi_1\}_1,\psi_1\}_1^h
$$
and since $\Pi_1$ is Poisson, taking the cyclic sum
$\oint_{\varphi_1,\phi_1,\psi_1}$, we get
\begin{align*}
\oint_{\varphi_1,\phi_1,\psi_1}\{\{\varphi_1^h,\phi_1^h\},\psi_1^h\}
=-\mu_1^h\mu_2^v\{f_2,\mu_2\}_2^v\oint_{\varphi_1,\phi_1,\psi_1}\{\varphi_1,\phi_1\}_1^h\{f_1,\psi_1\}_1^h,
\end{align*}
and using the same arguments we also get
\begin{align*}
\oint_{\varphi_2,\phi_2,\psi_2}\{\{\varphi_2^v,\phi_2^v\},\psi_2^v\}
=\mu_1^h\mu_2^v\{f_1,\mu_1\}_1^h\oint_{\varphi_2,\phi_2,\psi_2}\{\varphi_2,\phi_2\}_2^v\{f_2,\psi_2\}_2^v,
\end{align*}
\begin{align*}
\displaystyle\oint_{\varphi_1,\phi_1,\psi_2}\{\{\varphi_1^h,\phi_1^h\},\psi_2^v\}
&=(\mu_2^v)^2\{f_2,\psi_2\}_2^v[\{f_1,\phi_1\}_1\{\mu_1,\varphi_1\}_1-\{f_1,\varphi_1\}_1\{\mu_1,\phi_1\}_1]^h\\
&+\mu_1^h\{\varphi_1,\phi_1\}_1^h\{\mu_2,\psi_2\}_2^v
\end{align*}
and
\begin{align*}
\displaystyle\oint_{\varphi_2,\phi_2,\psi_1}\{\{\varphi_2^v,\phi_2^v\},\psi_1^h\}
&=(\mu_1^h)^2\{f_1,\psi_1\}_1^h[\{f_2,\varphi_2\}_2\{\mu_2,\phi_2\}_2-\{f_2,\phi_2\}_2\{\mu_2,\varphi_2\}_2]^v\\
&+\mu_2^v\{\varphi_2,\phi_2\}_2^v\{\mu_1,\psi_1\}_1^h.
\end{align*}
Now, $\mu_1$ and $\mu_2$ being Casimir functions, we deduce that
\begin{align*}
\oint_{\varphi_1,\phi_1,\psi_2}\{\{\varphi_1^h,\phi_1^h\},\psi_1^h\}&
=\oint_{\varphi_1,\phi_1,\psi_2}\{\{\varphi_2^v,\phi_2^v\},\psi_2^v\}
=\oint_{
\varphi_1,\phi_1,\psi_2}\{\{\varphi_1^h,\phi_1^h\},\psi_2^v\}\\
&=\oint_{\varphi_1,\phi_1,\psi_2}\{\{\varphi_2^v,\phi_2^v\},\psi_1^h\}=0.
\end{align*}
\end{proof}

\section{Warped bivector fields on warped products}

In this section, we define the contravariant warped product in the
same way the covariant warped product was defined in \cite{bishop}.
On a contravariant warped product equipped with a warped bivector
field, we compute the Levi-Civita contravariant connection and the
associated curvatures. Several proofs contain standard but long
computations, and hence will be omitted.

\subsection{The Levi-Civita contravariant connection on a
warped product manifold equipped with a warped bivector field}

Let $(M_1,\tilde{g}_1)$ and $(M_2,\tilde{g}_2)$ be two
pseudo-Riemannian manifolds and let $g_1$ and $g_2$ be the cometrics
of $\tilde{g}_1$ and $\tilde{g}_2$ respectively. Let $f$ be a
positive smooth function on $M_1$. The contravariant metric
$g^f=g_1^h+f^hg_2^v$ on the product manifold $M_1\times M_2$ is
characterized by the following identities
\begin{equation}  \label{warped metric}
g^{f}(\alpha_1^h,\beta_1^h)=g_1(\alpha_1,\beta_1)^h, \hskip 0.2cm
 g^{f}(\alpha_2^v,\beta_2^v)=f^h g_2(\alpha_2,\beta_2)^v, \hskip 0.2cm g^{f}(\alpha_1^h,\alpha_2^v)=0,
\end{equation}
for any $\alpha_i,\beta_i\in\Gamma(T^*M_i)$, $i=1,2$. We call
$(M_1\times M_2,g^f)$ the contravariant warped product of
$(M_1,\tilde{g}_1)$ and $(M_2,\tilde{g}_2)$. The following lemma
shows that the contravariant tensor $g^f$ is nothing else than the
cometric of the warped metric $\tilde{g}_{\frac{1}{f}}$.

\begin{lemma}\label{shg}  Let $\alpha_i,\beta_i\in\Gamma(T^*M_i)$ and $X_i\in
\Gamma(TM_i)$, $i=1,2$. Let $\alpha=\alpha_1^h+\alpha_2^v$ and
$X=X_1^h+X_2^v$. We have
\begin{enumerate}
\item $\sharp_{g^{f}}(\alpha)=[\sharp_{g_1}(\alpha_1)]^h+f^h[%
\sharp_{g_2}(\alpha_2)]^v$,
\item $\sharp_{g^{f}}^{-1}(X)=[\sharp_{g_1}^{-1}(X_1)]^h+\frac{1}{f^h}%
[\sharp_{g_2}^{-1}(X_2)]^v$,
\item
$\widetilde{g^f}(X,X)=\tilde{g}_1(X_1,X_1)^h+\frac{1}{f^h}\tilde{g}_2(X_2,X_2)^v$,
where $\widetilde{g^f}$ is the metric on $M_1\times M_2$ whose
cometric is $g^f$.
\end{enumerate}
\end{lemma}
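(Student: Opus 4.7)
The plan is to prove the three assertions in order, since each builds on the previous, and to model the argument on the proof of Proposition \ref{bracket}(1) which uses the same trick of testing against lifted covectors via Lemma \ref{lift}(ii).

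For assertion 1, I would verify the identity by checking that $\sharp_{g^f}(\alpha)$ and the proposed expression agree when evaluated on any lifted covector $\gamma_1^h$ or $\gamma_2^v$. Concretely, using (\ref{warped metric}) and Lemma \ref{lift}, one computes
\[
\gamma_1^h\bigl(\sharp_{g^f}(\alpha_1^h)\bigr)=g^f(\alpha_1^h,\gamma_1^h)=g_1(\alpha_1,\gamma_1)^h
=\bigl(\gamma_1(\sharp_{g_1}(\alpha_1))\bigr)^h=\gamma_1^h\bigl([\sharp_{g_1}(\alpha_1)]^h\bigr),
\]
while $\gamma_2^v(\sharp_{g^f}(\alpha_1^h))=g^f(\alpha_1^h,\gamma_2^v)=0=\gamma_2^v([\sharp_{g_1}(\alpha_1)]^h)$, and analogously for $\alpha_2^v$ with the factor $f^h$ appearing from the second identity in (\ref{warped metric}). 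Lemma \ref{lift}(1)(ii) then forces equality.

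For assertion 2, the cleanest approach is to take the expression on the right and apply $\sharp_{g^f}$ to it using assertion 1, observing that $\sharp_{g_i}\circ\sharp_{g_i}^{-1}=\mathrm{id}$ and that $f^h\cdot\frac{1}{f^h}=1$; this recovers $X_1^h+X_2^v=X$, so the right-hand side is indeed the preimage of $X$. Alternatively one could verify it directly by dualising assertion 1 through $\flat_{g^f}$, but the composition check is shorter.

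For assertion 3, I would use the defining relation $\widetilde{g^f}(Y,Z)=g^f\bigl(\sharp_{g^f}^{-1}(Y),\sharp_{g^f}^{-1}(Z)\bigr)$ between a pseudo-Riemannian metric and its cometric, substitute the formula from assertion 2, and then expand using (\ref{warped metric}). The cross term $g^f([\sharp_{g_1}^{-1}(X_1)]^h,\tfrac{1}{f^h}[\sharp_{g_2}^{-1}(X_2)]^v)$ vanishes by the third identity in (\ref{warped metric}), while the diagonal terms produce $\tilde g_1(X_1,X_1)^h$ and $\frac{1}{f^h}\tilde g_2(X_2,X_2)^v$ (the factor $\tfrac{1}{(f^h)^2}$ from $\sharp_{g^f}^{-1}$ combines with a single $f^h$ from $g^f(\cdot,\cdot)$ on the vertical piece to give $\tfrac{1}{f^h}$). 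The only subtlety is bookkeeping the warping factors; no part of the argument is genuinely hard, but the $f$-power counting in assertion 3 is the step where a sign or exponent mistake is most likely, so I would carry out that computation most carefully.
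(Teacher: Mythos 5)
Your proposal is correct and follows essentially the same route as the paper: assertion 1 by testing against lifted covectors exactly as in Proposition \ref{bracket}(1), assertion 2 by inverting assertion 1 (the paper substitutes $\sharp_{g_1}(\alpha_1)=X_1$, $\sharp_{g_2}(\alpha_2)=X_2$, which is your composition check read backwards), and assertion 3 by combining 1 and 2 with the defining relation between $g^f$ and $\widetilde{g^f}$. Your $f$-power bookkeeping in assertion 3 is also right.
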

\begin{proof}
The proof of the first assertion is the same as that of the first
assertion in Proposition \ref{bracket}. For the second assertion, it
suffices to put $\sharp_{g_1}(\alpha_1)=X_1$ and
$\sharp_{g_2}(\alpha_2)=X_2$ in 1. The third assertion follows from
the assertions 1. and 2.
\end{proof}

Let $\Pi_i$ be a bivector field on $M_i$, $i=1,2$, and let $\mu$ be
a smooth function on $M_1$. Using the Koszul formula (\ref{koszul}),
let us compute the Levi-Civita contravariant connection
$\mathcal{D}$, associated with the pair $(g^f,\Pi^{^\mu})$, in terms
of the Levi-Civita connections $\mathcal{D}^1$ and $\mathcal{D}^2$
associated with the pairs $(g_1,\Pi_1)$ and $(g_2,\Pi_2)$
respectively.

\begin{proposition}\label{connection}
For any $\alpha_i,\beta_i\in\Gamma(T^*M_i)$, $i=1,2$, we have
\begin{enumerate}
\item $\mathcal{D}_{\alpha_1^h}\beta_1^h=(\mathcal{D}^1_{\alpha_1}\beta_1)^h$,
\item $\mathcal{D}_{\alpha_2^v}\beta_2^v=\mu^h(\mathcal{D}
^2_{\alpha_2}\beta_2)^v+\frac{1}{2}\Pi_2(\alpha_2,\beta_2)^v(d\mu)^h
+\frac{1}{2}g_2(\alpha_2,\beta_2)^v(J_1d{f})^h$,
\item $\mathcal{D}_{\alpha_1^h}\beta_2^v=-\frac{1}{2f^h}
\left[g_1(J_1df,\alpha_1)^h\beta_2^v+g_1(d\mu,\alpha_1)^h
(J_2\beta_2)^v\right]$,

\item $\mathcal{D}_{\beta_2^v}\alpha_1^h=\mathcal{D}%
_{\alpha_1^h}\beta_2^v$,
\end{enumerate}

\end{proposition}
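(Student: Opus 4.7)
The plan is to apply the Koszul formula (\ref{koszul}) directly to compute $\mathcal{D}_{\alpha}\beta$ for each pair of lifts, testing the resulting bilinear form $g^f(\mathcal{D}_\alpha \beta, \cdot)$ against a horizontal lift $\gamma_1^h$ and against a vertical lift $\gamma_2^v$, then invoking the non-degeneracy of $g^f$ (more precisely, its block-diagonal structure from (\ref{warped metric})) together with Lemma \ref{lift}(1)(ii) to identify $\mathcal{D}_\alpha \beta$ uniquely. The toolbox at hand consists of: the formula $\sharp_{\Pi^{^\mu}}(\alpha) = \sharp_{\Pi_1}(\alpha_1)^h + \mu^h \sharp_{\Pi_2}(\alpha_2)^v$ from Proposition \ref{bracket}(1), the Koszul bracket formula in Proposition \ref{bracket}(3), and the identities for $g^f$ from Lemma \ref{shg}.

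For assertion 1, every term in the Koszul formula with $\alpha = \alpha_1^h$, $\beta = \beta_1^h$, $\gamma = \gamma_1^h$ reduces to the horizontal lift of the corresponding term in the Koszul formula for $\mathcal{D}^1$ on $(M_1,g_1,\Pi_1)$, while testing against $\gamma_2^v$ yields zero on both sides because $\sharp_{\Pi_1}(\alpha_1)^h$ annihilates vertical lifts of functions from $M_2$ and the block-diagonal form of $g^f$ kills the cross terms; this forces $\mathcal{D}_{\alpha_1^h}\beta_1^h = (\mathcal{D}^1_{\alpha_1}\beta_1)^h$. For assertion 3, I test $\mathcal{D}_{\alpha_1^h}\beta_2^v$ against $\gamma_1^h$ first: every derivative-of-metric term vanishes (horizontal lifts are $\sigma_2$-invariant, vertical lifts are $\sigma_1$-invariant, and $g^f(\alpha_1^h,\beta_2^v)=0$), and the bracket contributions vanish by Proposition \ref{bracket}(3) applied to mixed pairs, giving $0$, consistent with the proposed formula. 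Testing against $\gamma_2^v$, the only surviving terms are $\sharp_{\Pi_1}(\alpha_1)^h . g^f(\beta_2^v,\gamma_2^v) = [\sharp_{\Pi_1}(\alpha_1)(f)]^h g_2(\beta_2,\gamma_2)^v$ (which produces the $J_1 df$ term via $\sharp_{\Pi_1}(\alpha_1)(f) = \Pi_1(\alpha_1,df) = -g_1(J_1 df,\alpha_1)$) and the bracket term $g^f([\gamma_2^v,\beta_2^v]_{\Pi^\mu},\alpha_1^h)$, whose $(d\mu)^h$ component produces the $J_2\beta_2$ term after using $\Pi_2(\gamma_2,\beta_2) = -g_2(J_2\beta_2,\gamma_2)$.

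Assertion 2 is the most delicate case and the one I expect to be the main obstacle, since the Koszul formula for $\mathcal{D}_{\alpha_2^v}\beta_2^v$ mixes two distinct effects: the $\mu^h$ factor in the vertical bracket $[\alpha_2^v,\beta_2^v]_{\Pi^{^\mu}} = \mu^h[\alpha_2,\beta_2]_{\Pi_2}^v + \Pi_2(\alpha_2,\beta_2)^v(d\mu)^h$ and the $f^h$ factor in $g^f$ on vertical lifts. Testing against $\gamma_2^v$ makes the $(d\mu)^h$ contribution vanish by block-diagonality, and the remaining terms factor as $\mu^h f^h$ times the full Koszul expression for $\mathcal{D}^2$, reproducing the $\mu^h(\mathcal{D}^2_{\alpha_2}\beta_2)^v$ term. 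Testing against $\gamma_1^h$, the only surviving contributions are $-\sharp_{\Pi_1}(\gamma_1)^h . g^f(\alpha_2^v,\beta_2^v) = -[\sharp_{\Pi_1}(\gamma_1)(f)]^h g_2(\alpha_2,\beta_2)^v$, which produces the $(J_1 df)^h$ coefficient, and the horizontal component of $[\alpha_2^v,\beta_2^v]_{\Pi^{^\mu}}$, which produces the $(d\mu)^h$ coefficient; the factor of $\tfrac{1}{2}$ in the proposed formula comes from the $2$ on the left-hand side of (\ref{koszul}).

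Finally, assertion 4 follows immediately from the torsion-freeness of $\mathcal{D}$ combined with the vanishing of the mixed Koszul bracket: Proposition \ref{bracket}(3) gives $[\alpha_1^h,\beta_2^v]_{\Pi^{^\mu}} = 0$, so $\mathcal{D}_{\alpha_1^h}\beta_2^v - \mathcal{D}_{\beta_2^v}\alpha_1^h = 0$. Throughout, the bookkeeping for what remains after the block-diagonal pairing of $g^f$ is the only real source of error risk; once the vanishing patterns are established, each nonzero term matches exactly one summand on the right-hand sides of the proposition.
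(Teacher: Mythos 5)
Your proposal is correct and follows essentially the same route as the paper's own proof: specializing the Koszul formula (\ref{koszul}) to the various lift combinations, using Proposition \ref{bracket} and the block structure (\ref{warped metric}) to identify each component, and deducing assertion 4 from torsion-freeness together with $[\alpha_1^h,\beta_2^v]_{\Pi^{^\mu}}=0$. The sign bookkeeping (e.g. $\sharp_{\Pi_1}(\alpha_1)(f)=\Pi_1(\alpha_1,df)=-g_1(J_1df,\alpha_1)$) checks out.
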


\begin{proof}
Let $\alpha_i,\beta_i, \gamma_i\in \Gamma(T^*M_i)$, $i=1,2$. For any
$(i,l),(j,l),(k,l)\in \{(1,h),(2,v)\}$, we have
\begin{equation}\label{koszulijk}
\begin{array}{ll}
2{g^{f}}(\mathcal{D}_{\alpha_i^l}\beta_j^l,\gamma_k^l)\!\!\!\! & =
\sharp_{\Pi^{^\mu}}(\alpha_i^l).{g^{f}}(\beta_j^l,\gamma_k^l)+\sharp_{\Pi^{^\mu}}(\beta_j^l).{g^f}(\alpha_i^l,
\gamma_k^l)-\sharp_{\Pi^{^\mu}}(\gamma_k^l).{g^f}(\alpha_i^l,\beta_j^l)\\
&+{g^{f}}([\alpha_i^l,\beta_j^l]_{\Pi^{^\mu}},\gamma_k^l)+{g^{f}}([\gamma_k^l,\alpha_i^l]_{\Pi^{^\mu}},\beta_j^l)
+{g^{f}}([\gamma_k^l,\beta_j^l]_{\Pi^{^\mu}}, \alpha_i^l).
\end{array}
\end{equation}
1. Taking $(i,l)=(j,l)=(k,l)=(1,h)$ in this formula, using Formula
(\ref{warped metric}) and Proposition \ref{bracket}, we get
$$
2g^{f}(\mathcal{D}_{\alpha_1^h}\beta_1^h,\gamma_1^h)=2(g_1(\mathcal{D}^1_{\alpha_1}\beta_1,\gamma_1))^h,
$$
and using (\ref{warped metric}) again, we get
$$
g^{f}(\mathcal{D}_{\alpha_1^h}\beta_1^h,\gamma_1^h)=g^f((\mathcal{D}^1_{\alpha_1}\beta_1)^h,\gamma_1^h).
$$
Similarly, taking $(i,l)=(j,l)=(1,h)$ and $(k,l)=(2,v)$, we get $
{g^{f}}(\mathcal{D}_{\alpha_1^h}\beta_1^h,\gamma_2^v) =0$ and then
$$
{g^{f}}(\mathcal{D}_{\alpha_1^h}\beta_1^h,\gamma_2^v)
=g^f((\mathcal{D}^1_{\alpha_1}\beta_1)^h,\gamma_2^v).
$$
The result follows. \smallskip\\
2. Taking $(i,l)=(j,l)=(2,v)$ and $(k,l)=(1,h)$ in
(\ref{koszulijk}), using Formula (\ref{warped metric}) and
Proposition \ref{bracket}, we get
\begin{align*}
g^{f}(\mathcal{D}_{\alpha_2^v}\beta_2^v,\gamma_1^h)=&\frac{1}{2}
\left\{g^{f}([\alpha_2^v,\beta_2^v]_{\Pi^{^\mu}},\gamma_1^h)
-\sharp_{\Pi^{^\mu}}(\gamma_1^h).{g^{f}}(\alpha_2^v,\beta_2^v)\right\}\\
=&\frac{1}{2}
\left\{\Pi_2(\alpha_2,\beta_2)^v g_1(d\mu,\gamma_1)^h-g_2(\alpha_2,\beta_2)^v\Pi_1(\gamma_1,df)^h\right\}\\
=&g^f(\frac{1}{2} \left\{\Pi_2(\alpha_2,\beta_2)^v (d\mu)^h+
g_2(\alpha_2,\beta_2)^v(J_1df)^h \right\} ,\gamma_1^h),
\end{align*}
and similarly, taking $(i,l)=(j,l)=(k,l)=(2,v)$ in
(\ref{koszulijk}), we get
$$
g^{f}(\mathcal{D}_{\alpha_2^v}\beta_2^v,\gamma_2^v)
=\mu^hg^{f}((\mathcal{D}^2_{\alpha_2}\beta_2)^v,\gamma_2^v).
$$
3. This is analogous to the proofs of 1. and 2.\smallskip\\
4. Since $\mathcal{D}$ is torsion-free we have
$\mathcal{D}_{\beta_2^v}\alpha_1^h=\mathcal{D}_{\alpha_1^h}\beta_2^v
+[\alpha_1^h,\beta_2^v]_{\Pi^{^\mu}}$. By Proposition \ref{bracket},
we have $[\alpha_1^h,\beta_2^v]_{\Pi^{^\mu}}=0$.
\end{proof}

\begin{proposition}\label{contra Poisson}
Let $\alpha_i,\beta_i,\gamma_i\in\Gamma(T^*M_i)$, $i=1,2$. We have
\begin{align*}
1.~
\mathcal{D}\Pi^{^\mu}(\alpha_1^h,\beta_1^h,\gamma_1^h)=
&\left[\mathcal{D}^1\Pi_1(\alpha_1,\beta_1,\gamma_1)\right]^h, \\
2.~
\mathcal{D}\Pi^{^\mu}(\alpha_2^v,\beta_2^v,\gamma_2^v)=
&(\mu^2)^h\left[\mathcal{D}^2\Pi_{2}(\alpha_2,\beta_2,\gamma_2)\right]^v, \\
3.~
\mathcal{D}\Pi^{^\mu}(\alpha_1^h,\beta_1^h,\gamma_2^v)=
&\mathcal{D}\Pi^{^\mu}(\alpha_1^h,\beta_2^v,\gamma_1^h)=
\mathcal{D}\Pi^{^\mu}(\alpha_2^v,
\beta_1^h,\gamma_1^h)=0, \\
4.
~\mathcal{D}\Pi^{^\mu}(\alpha_1^h,\beta_2^v,\gamma_2^v)=&\frac{\mu^h}{f^h}
g_1(J_1df,\alpha_1)^h\Pi_2(\beta_2,\gamma_2)^v, \\
5.~ \mathcal{D}\Pi^{^\mu}(\alpha_2^v,\beta_1^h,\gamma_2^v)=
&\frac{\mu^h}{2f^h}\left[\Pi_2(\alpha_2,\gamma_2)^vg_1(J_1df,\beta_1)^h
-\Pi_2(\alpha_2,J_2\gamma_2)^v g_1(d\mu,\beta_1)^h\right] \\
&+\frac{1}{2}\left[g_2(\alpha_2,\gamma_2)^v\Pi_1(J_1df,\beta_1)^h
-g_2(\alpha_2,J_2\gamma_2)^v\Pi_1(d\mu,\beta_1)^h\right],\\
6.~
\mathcal{D}\Pi^{^\mu}(\alpha_2^v,\beta_2^v,\gamma_1^h)=
&\frac{\mu^h}{2f^h}\left[\Pi_2(\alpha_2,J_2\beta_2)^v
g_1(d\mu,\gamma_1)^h-\Pi_2(\alpha_2,\beta_2)^vg_1(J_1df,\gamma_1)^h\right] \\
&+\frac{1}{2}\left[g_2(\alpha_2,J_2\beta_2)^v\Pi_1(d\mu,\gamma_1)^h
-g_2(\alpha_2,\beta_2)^v\Pi_1(J_1d{f},\gamma_1)^h\right].
\end{align*}
\end{proposition}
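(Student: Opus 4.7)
The plan is to apply the defining formula (\ref{connection multi}) to $Q=\Pi^{^\mu}$, giving
\[
\mathcal{D}\Pi^{^\mu}(\alpha,\beta,\gamma)=\sharp_{\Pi^{^\mu}}(\alpha)\bigl(\Pi^{^\mu}(\beta,\gamma)\bigr)
-\Pi^{^\mu}(\mathcal{D}_{\alpha}\beta,\gamma)-\Pi^{^\mu}(\beta,\mathcal{D}_{\alpha}\gamma),
\]
and treat each of the six configurations of horizontal/vertical lifts separately. The three ingredients I need in every case are: the explicit form of $\sharp_{\Pi^{^\mu}}$ from Proposition~\ref{bracket}, the explicit form of $\mathcal{D}$ from Proposition~\ref{connection}, and the tabulated values of $\Pi^{^\mu}$ on pairs of lifts. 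These are combined with the elementary action rules from Lemma~\ref{lift}, notably $X_1^h(\varphi_2^v)=0$ and $X_2^v(\varphi_1^h)=0$, which kill many cross terms.

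For item~1 every intermediate quantity is a horizontal lift, so the right-hand side collapses directly to $[\mathcal{D}^1\Pi_1(\alpha_1,\beta_1,\gamma_1)]^h$. For item~2 the corrective pieces $(d\mu)^h$ and $(J_1df)^h$ appearing in $\mathcal{D}_{\alpha_2^v}\beta_2^v$ are horizontal and therefore annihilated when paired with the vertical lift $\gamma_2^v$ by $\Pi^{^\mu}$; only the vertical-vertical part $\mu^h(\mathcal{D}^2_{\alpha_2}\beta_2)^v$ survives, and the two $\mu^h$-factors (one from $\sharp_{\Pi^{^\mu}}(\alpha_2^v)$ and one from $\Pi^{^\mu}(\beta_2^v,\gamma_2^v)$) combine to the $(\mu^2)^h$ prefactor. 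Item~3 is immediate: in each of the three mixed configurations, the defining values of $\Pi^{^\mu}$ on horizontal--vertical pairs are zero, and the same is true for each of the three pieces in the defining expression.

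Items~4, 5 and 6 are the substantive ones. Consistently with the Poisson setting of Theorem~\ref{tensor} I shall assume that $\mu$ is a Casimir function of $\Pi_1$, so that $J_1d\mu=0$. In item~4 this kills the first term $\sharp_{\Pi^{^\mu}}(\alpha_1^h)\bigl(\mu^h\Pi_2(\beta_2,\gamma_2)^v\bigr)=\Pi_1(\alpha_1,d\mu)^h\Pi_2(\beta_2,\gamma_2)^v$. Each of the two remaining pieces then contributes a copy of $-\tfrac{\mu^h}{2f^h}g_1(J_1df,\alpha_1)^h\Pi_2(\beta_2,\gamma_2)^v$, summing to the stated coefficient, together with a companion term proportional to $g_1(d\mu,\alpha_1)^h\bigl[\Pi_2(J_2\beta_2,\gamma_2)+\Pi_2(\beta_2,J_2\gamma_2)\bigr]^v$. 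The latter vanishes identically: skew-symmetry of $\Pi_2$ combined with symmetry of $g_2$ makes $J_2$ $g_2$-skew, whence $J_2^2$ is $g_2$-symmetric and
\[
\Pi_2(J_2\beta,\gamma)+\Pi_2(\beta,J_2\gamma)
=g_2(J_2^2\beta,\gamma)-g_2(\beta,J_2^2\gamma)=0.
\]
Items~5 and~6 follow the same template: one splits $\mathcal{D}_{\alpha_2^v}\beta_1^h$, respectively $\mathcal{D}_{\alpha_2^v}\beta_2^v$, into its horizontal and vertical constituents, contracts each with $\Pi^{^\mu}$ using the pair-tabulation, and then invokes the Casimir property together with the same $J_i$-skew identity to collapse the spurious cross-terms and reach the asymmetric formulas in which both a $g_1(J_1df,\cdot)$ and a $g_1(d\mu,\cdot)$ contribution survive.

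The only real obstacle is purely combinatorial: keeping precise track of the many pieces generated by the $(d\mu)^h$ and $(J_1df)^h$ correction terms of $\mathcal{D}$ on mixed arguments, especially in items~5 and~6 where both the input $\mathcal{D}_{\cdot}\cdot$ and the output $\Pi^{^\mu}(\cdot,\cdot)$ have mixed horizontal and vertical components. As the authors remark at the opening of \S5, these are standard but long computations, which is why the proof is best organised as a case-by-case table rather than a single unified calculation.
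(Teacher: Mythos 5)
Your overall strategy is the same as the paper's: the published proof consists precisely of the instruction to expand $(\mathcal{D}_{\alpha}\Pi^{^\mu})(\beta,\gamma)$ via (\ref{connection multi}) and substitute the formulas of Propositions \ref{bracket} and \ref{connection}, and the cancellation $\Pi_2(J_2\beta,\gamma)+\Pi_2(\beta,J_2\gamma)=0$ you isolate from the $g_2$-skewness of $J_2$ is indeed the identity that closes up items 4--6. The steps you make explicit check out, up to one sign slip noted below.

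The point that needs attention is the Casimir hypothesis you impose on $\mu$. It is not among the hypotheses of the proposition, which is stated for an arbitrary smooth $\mu$ on $M_1$, and it is demonstrably not needed for items 5 and 6: their right-hand sides retain the terms $\Pi_1(d\mu,\beta_1)^h$ and $\Pi_1(d\mu,\gamma_1)^h$, which would vanish identically if $\mu$ were Casimir, and running your expansion for those two items with no assumption on $\mu$ reproduces the stated formulas exactly. Where the assumption does real work is item 4: as your own computation shows, the first term of (\ref{connection multi}) contributes
\[
\sharp_{\Pi^{^\mu}}(\alpha_1^h)\left(\mu^h\Pi_2(\beta_2,\gamma_2)^v\right)
=\Pi_1(\alpha_1,d\mu)^h\,\Pi_2(\beta_2,\gamma_2)^v
=-g_1(J_1d\mu,\alpha_1)^h\,\Pi_2(\beta_2,\gamma_2)^v,
\]
which is absent from the stated right-hand side and cancels against nothing else. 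So either the proposition tacitly assumes $\mu$ Casimir, in which case the $\Pi_1(d\mu,\cdot)$ terms of items 5 and 6 should be read as zero, or the general formula for item 4 must carry this extra term; your proof as written establishes item 4 only under the added hypothesis while items 5 and 6 hold in a generality you do not claim, and you should say explicitly which version you are proving. Finally, in item 4 each of the two subtracted terms, e.g. $-\Pi^{^\mu}(\mathcal{D}_{\alpha_1^h}\beta_2^v,\gamma_2^v)$, contributes $+\frac{\mu^h}{2f^h}g_1(J_1df,\alpha_1)^h\Pi_2(\beta_2,\gamma_2)^v$ rather than the negative you wrote; it is these positive contributions that sum to the stated coefficient.
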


\begin{proof}
The proof uses (\ref{connection multi}), Proposition \ref{bracket}
an Proposition \ref{connection}.
\end{proof}

\subsection{The curvatures of the Levi-Civita contravariant connection}

In the following proposition, we express the curvature $\mathcal{R}$
of the contravariant connection $\mathcal{D}$ in terms of the
warping functions $f,\mu$ and the curvatures $\mathcal{R}_1$ and
$\mathcal{R}_2$ of $\mathcal{D}^1$ and $\mathcal{D}^2$ respectively.

\begin{proposition}\label{curvature}
Let $\alpha_i,\beta_i,\gamma_i\in\Gamma(T^*M_i)$, $i=1,2$, and let
$\gamma=\gamma_1^h+\gamma_2^v$. We have
\begin{align*}
1.~\mathcal{R}(\alpha_1^h,\beta_1^h)\gamma=\left[\mathcal{R}_1(\alpha_1,\beta_1)
\gamma_1\right]^h+\left[g_1(\mathcal{D}_{\beta_1}^1\dfrac{d\mu}{2{f}},\alpha_1)
-g_1(\mathcal{D}_{\alpha_1}^1\dfrac{d\mu}{2{f}},\beta_1)\right]^h(J_2\gamma_2)^v,
\end{align*}
\begin{align*}
2.~\mathcal{R}(\alpha_1^h,\beta_2^v)\gamma_1^h
=&\frac{1}{4(f^h)^2}\left[g_1(J_1df,\alpha_1)g_1(J_1df,\gamma_1)
-2f g_1(\mathcal{D}^1_{\alpha_1}(J_1df),\gamma_1)\right]^h\beta_2^v\\
&+\frac{1}{4(f^h)^2}\left[g_1(d\mu,\alpha_1)g_1(J_1df,\gamma_1)
+\!g_1(J_1df,\alpha_1)g_1(d\mu,\gamma_1)\right]^h\!(J_2\beta_2)^v\\
& -g_1(\mathcal{D}^1_{\alpha_1}\frac{d\mu}{2f},\gamma_1)^h(J_2
\beta_2)^v+\!\!\frac{1}{4(f^h)^2}\left[g_1(d\mu,\alpha_1)g_1(d\mu,\gamma_1)\right]^h\!(J_2^2\beta_2)^v,
\end{align*}
\begin{align*}
3.~&\mathcal{R}(\alpha_1^h,\beta_2^v)\gamma_2^v =
\frac{1}{4{f}^h}\left[g_1(d\mu,\alpha_1)^h\Pi_2(\beta_2,J_2\gamma_2)^v+
g_1(J_1df,\alpha_1)^h\Pi_2(\beta_2,\gamma_2)^v\right](d\mu)^h\\
& +\frac{1}{4{f}^h}\left[g_1(d\mu,\alpha_1)^h
g_2(\beta_2,J_2\gamma_2)^v+g_1(J_1df,\alpha_1)^hg_2(\beta_2,\gamma_2)^v\right](J_1d{f})^h\\
&+\frac{\mu^h}{2f^h}\left[g_1(d\mu,\alpha_1)^h\left(\mathcal{D}^2_{\beta_2}(J_2
\gamma_2)- J_2\mathcal{D}^2_{\beta_2}\gamma_2\right)^v
-g_1(J_1df,\alpha_1)^h(\mathcal{D}^2_{\beta_2}\gamma_2)^v\right]\\
&  +\frac{1}{2}\Pi_2(\beta_2,\gamma_2)^v(\mathcal{D}^1_{\alpha_1}d\mu)^h+%
\frac{1}{2}g_2(\beta_2,\gamma_2)^v(\mathcal{D}^1_{\alpha_1}(J_1df))^h
-\Pi_1(d\mu,\alpha_1)^h(\mathcal{D}^2_{\beta_2}\gamma_2)^v,
\end{align*}
\begin{align*}
\shoveright{4.~\mathcal{R}(\alpha_2^v,\beta_2^v)\gamma_1^h
=&\frac{1}{2{f^h}}\Pi_2(\alpha_2,\beta_2)^v
\left[g_1(d\mu,\gamma_1)(J_1df)
-g_1(J_1df,\gamma_1)d\mu-\mathcal{D}^1
_{d\mu}\gamma_1\right]^h} \\
 & -\frac{\mu^h}{2f^h}g_1(d\mu,\gamma_1)^h
 \left[\mathcal{D}^2_{\alpha_2}(J_2\beta_2)
 -\mathcal{D}^2_{\beta_2}(J_2\alpha_2)-J_2[
\alpha_2,\beta_2]_{\Pi_2}\right]^v,
\end{align*}
\begin{align*}
5.~\mathcal{R}(\alpha_2^v,\beta_2^v)\gamma_2^v = &
(\mu^2)^h\left[\mathcal{R}_2(\alpha_2,\beta_2)\gamma_2\right]^v\\
&+\frac{\mu^h}{2}\left[\mathcal{D}^2\Pi_2(\alpha_2,\beta_2,\gamma_2)
-\mathcal{D}^2\Pi_2(\beta_2,\alpha_2,\gamma_2) \right]^v(d\mu)^h\\
&+\left(\frac{\|d\mu\|_1^2}{4f}\right)^h\left[J_2\left(\Pi_2(\alpha_2,\gamma_2)\beta_2
-\Pi_2(\beta_2,\gamma_2)\alpha_2+2\Pi_2(\alpha_2,\beta_2)\gamma_2\right)\right]^v\\
&+\left(\frac{\|J_1df\|_1^2}{4f}\right)^h\left[g_2(\alpha_2,\gamma_2)\beta_2
-g_2(\beta_2,\gamma_2)\alpha_2\right]^v\\
&+\left(\frac{g_1(d\mu,J_1df)}{4f}\right)^h\!\!\left[\Pi_2(\alpha_2,\gamma_2)\beta_2
-\Pi_2(\beta_2,\gamma_2)\alpha_2+\!2\Pi_2(\alpha_2,\beta_2)\gamma_2\right]^v\\
&+\left(\frac{g_1(d\mu,J_1df)}{4f}\right)^h\left[J_2\left(g_2(\alpha_2,\gamma_2)\beta_2
-g_2(\beta_2,\gamma_2)\alpha_2\right)\right]^v.
\end{align*}
\end{proposition}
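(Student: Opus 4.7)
The plan is to apply the definition
$$\mathcal{R}(\alpha,\beta)\gamma = \mathcal{D}_\alpha \mathcal{D}_\beta \gamma - \mathcal{D}_\beta \mathcal{D}_\alpha \gamma - \mathcal{D}_{[\alpha,\beta]_{\Pi^{^\mu}}} \gamma$$
to each combination of horizontal and vertical lifts of the three arguments, using Proposition \ref{bracket} for the Koszul brackets on lifts and Proposition \ref{connection} for $\mathcal{D}$. By the antisymmetry of $\mathcal{R}$ in its first two slots, the five cases listed in the statement exhaust all configurations up to sign.

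For case 1, one has $[\alpha_1^h,\beta_1^h]_{\Pi^{^\mu}} = [\alpha_1,\beta_1]_{\Pi_1}^h$, so the horizontal component reproduces the lift of $\mathcal{R}_1(\alpha_1,\beta_1)\gamma_1$. The vertical component comes from iterating the mixed rule of Proposition \ref{connection} (3); after antisymmetrizing in $(\alpha_1,\beta_1)$ the terms proportional to $\gamma_2^v$ cancel and the $(J_2\gamma_2)^v$ coefficient collapses to $g_1(\mathcal{D}^1_{\beta_1}(d\mu/2f),\alpha_1) - g_1(\mathcal{D}^1_{\alpha_1}(d\mu/2f),\beta_1)$ after a single appeal to (\ref{parallel}) for $(g_1,\Pi_1)$.

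Cases 2--4 follow the same scheme. Proposition \ref{bracket} gives $[\alpha_1^h,\beta_2^v]_{\Pi^{^\mu}}=0$, so in cases 2 and 3 the curvature reduces to the plain commutator $\mathcal{D}_{\alpha_1^h}\mathcal{D}_{\beta_2^v} - \mathcal{D}_{\beta_2^v}\mathcal{D}_{\alpha_1^h}$; substituting Proposition \ref{connection} and collecting terms produces the $\mathcal{D}^1_{\alpha_1}(J_1df)$ and $\mathcal{D}^1_{\alpha_1}d\mu$ terms as well as the algebraic $J_2$ and $J_2^2$ coefficients that appear when $\sharp_{\Pi_1}(\alpha_1)$ differentiates the prefactor $1/(2f)$. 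In case 4 the bracket $[\alpha_2^v,\beta_2^v]_{\Pi^{^\mu}} = \mu^h[\alpha_2,\beta_2]_{\Pi_2}^v + \Pi_2(\alpha_2,\beta_2)^v(d\mu)^h$ splits the curvature into a purely $M_2$-piece, which vanishes on $\gamma_1^h$, and a $(d\mu)^h$-piece that, through $\mathcal{D}_{(d\mu)^h}\gamma_1^h = (\mathcal{D}^1_{d\mu}\gamma_1)^h$, yields the displayed $\mathcal{D}^1_{d\mu}\gamma_1$ contribution; the remaining summands follow directly from Proposition \ref{connection} (3).

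The longest calculation, and the main obstacle, is case 5. Iterating the vertical rule of Proposition \ref{connection} (2) produces $(\mu^2)^h[\mathcal{R}_2(\alpha_2,\beta_2)\gamma_2]^v$ together with a large collection of cross-terms carrying factors of $(d\mu)^h$ and $(J_1df)^h$. My plan is to organize these into the five displayed bundles: the $\mathcal{D}^2\Pi_2$ asymmetry arises from applying $\sharp_{\Pi^{^\mu}}(\alpha_2^v)$ to the $\Pi_2(\beta_2,\gamma_2)^v$ and $g_2(\beta_2,\gamma_2)^v$ coefficients produced by the mixed rule, while the three scalar packages weighted by $\|d\mu\|_1^2$, $\|J_1df\|_1^2$ and $g_1(d\mu,J_1df)$ come from pairings of the $(d\mu)^h$ and $(J_1df)^h$ factors through $g^f$, using $g^f((d\mu)^h,(d\mu)^h) = \|d\mu\|_1^2$ and its analogues. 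The delicate bookkeeping step is deciding, in each resulting term, whether a $J_2$-twist lands on $\gamma_2$: this is settled by tracking which contributions enter through $\sharp_{\Pi^{^\mu}}$, producing $J_2$-factors via (\ref{gJPi}), versus those entering through $g^f$. Once the terms are sorted, each identity reduces to a direct verification which, in the spirit of the opening remark of Section 5, will be omitted.
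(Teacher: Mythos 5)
Your proposal follows exactly the route the paper takes: the paper's entire proof of Proposition \ref{curvature} is the remark that it consists of ``long but straightforward computations using Propositions \ref{bracket} and \ref{connection}'', i.e.\ expanding $\mathcal{R}=\mathcal{D}_\alpha\mathcal{D}_\beta-\mathcal{D}_\beta\mathcal{D}_\alpha-\mathcal{D}_{[\alpha,\beta]_{\Pi^{^\mu}}}$ on lifts, which is precisely your plan, and your case-by-case organization (including the vanishing of $[\alpha_1^h,\beta_2^v]_{\Pi^{^\mu}}$ and the splitting of $[\alpha_2^v,\beta_2^v]_{\Pi^{^\mu}}$) is consistent with what that computation requires. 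Your outline is correct and in fact more informative than the proof given in the paper.
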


\begin{proof}
Long but straightforward computations using Propositions
\ref{bracket} and \ref{connection}.
\end{proof}

Now, in the following two corollaries, we express the Ricci
curvature $r$ (resp. the scalar curvature $\mathcal{S}$) of the
contravariant connection $\mathcal{D}$ in terms of the warping
functions $f,\mu$ and the Ricci curvatures $r_i$ (resp. the scalar
curvatures $\mathcal{S}_i$) of $\mathcal{D}^i$, $i=1,2$. For the
proof, notice that if we choose $\{dx_1,dx_2,\ldots,dx_{n_1}\}$ to
be a local $g_1$-orthonormal basis of the $1$-forms on an open
$U_1\subseteq M_1$ and $\{dy_1,dy_2,\ldots,dy_{n_2}\}$ to be a local
$g_2$-orthonormal basis of the $1$-forms on an open $U_2\subseteq
M_2$, then
$$
\{dx_1^h,\ldots,dx_{n_1}^h,\frac{1}{\sqrt{{f}^h}}dy_1^v,\ldots,\frac{1}{\sqrt{{f}^h}}dy_{n_2}^v\}
$$
is a local $g^f$-orthonormal basis of the $1$-forms on the open
$U_1\times U_2$ of $M_1\times M_2$.

\begin{corollary}\label{ricci} For any $\alpha_i,\beta_i\in\Gamma(T^*M_i)$, $i=1,2$,
we have
\begin{align*}
1. ~r(\alpha_1^h,\beta_1^h)=& r_1(\alpha_1,\beta_1)^h
+\frac{\left(\|J_2\|_2^2\right)^v}{4(f^h)^2}\left(g_1(d\mu,\alpha_1)g_1(d\mu,\beta_1)\right)^h\\
&+\frac{n_2}{4(f^h)^2}\left[g_1(J_1df,\alpha_1)g_1(J_1df,\beta_1)
+2fg_1(\mathcal{D}_{\alpha_1}^1(J_1df),\beta_1)\right]^h
\end{align*}
where
$$
\|J_2\|_2^2=\sum_{i=1}^{n_2}\|J_2dy_i\|_2^2=\sum_{i=1}^{n_2}g_2(J_2dy_i,J_2dy_i)=-trac_{g_2}(J_2\circ J_2),
$$
\begin{align*}
2.~r(\alpha_1^h,\beta_2^v)=&\frac{\mu^h}{2f^h}g_1(d\mu,\alpha_1)^h
\left(tr_{g_2}(\alpha_2\mapsto (\mathcal{D}^2_{\alpha_2}J)(\beta_2))\right)^v \\
&+\left(\Pi_1(d\mu,\alpha_1)+\frac{\mu}{2f}g_1(J_1df,\alpha_1)\right)^h
\left(tr_{g_2}(\mathcal{D}^2\beta_2)\right)^v,
\end{align*}
where
$$
tr_{g_2}(\alpha_2\mapsto (\mathcal{D}^2_{\alpha_2}J)(\beta_2))=\left[\sum_{1=1}^{n_2}g_2(J_2\mathcal{D}_{dy_i}^2\beta_2
-\mathcal{D}_{dy_i}^2(J_2\beta_2),dy_i)\right],
$$
and
$$
div\beta_2=tr_{g_2}(\mathcal{D}^2\beta_2)=\left[\sum_{1=1}^{n_2}g_2(\mathcal{D}_{dy_i}^2\beta_2,dy_i)\right],
$$
\begin{align*}
3.~r(\alpha_2^v,\beta_2^v)=&(\mu^2)^h r_2(\alpha_2,\beta_2)^v
-\left(\frac{\|d\mu\|_1^2}{2f}\right)^h\Pi_2(\alpha_2,J_2\beta_2)^v\\
&-\left(\frac{(n_2-2)\|J_1df\|_1^2}{4f}\right)^hg_2(\alpha_2,\beta_2)^v
-\left(\frac{n_2g_1(d\mu,J_1df)}{4f}\right)^h\Pi_2(\alpha_2,\beta_2)^v \\
&+\frac{1}{2}(\rhd_{\Pi_1}(\mu))^h\Pi_2(\alpha_2,\beta_2)^v
+\frac{1}{2}(\lhd_{\Pi_1}(f))^hg_2(\alpha_2,\beta_2)^v.
\end{align*}

\end{corollary}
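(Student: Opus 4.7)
The plan is to apply the definition of Ricci curvature using the explicit $g^f$-orthonormal basis indicated in the hint preceding the corollary. By $C^\infty$-multilinearity of $\mathcal{R}$ the $1/\sqrt{f^h}$ normalizations combine to give
$$r(\alpha,\beta)=\sum_{i=1}^{n_1} g^f(\mathcal{R}(\alpha,dx_i^h)dx_i^h,\beta)+\frac{1}{f^h}\sum_{j=1}^{n_2} g^f(\mathcal{R}(\alpha,dy_j^v)dy_j^v,\beta),$$
so each of the three cases of the corollary reduces to substituting the appropriate entries of Proposition \ref{curvature} into these two sums and collecting terms.

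For part 1, the horizontal sum uses case 1 of Proposition \ref{curvature}: with $\gamma=dx_i^h$ we have $\gamma_2=0$, so only the $[\mathcal{R}_1(\alpha_1,dx_i)dx_i]^h$ term survives, giving $r_1(\alpha_1,\beta_1)^h$. The vertical sum uses case 3 with $\beta_2=\gamma_2=dy_j$; pairing with $\beta_1^h$ under $g^f$ annihilates all purely vertical outputs, leaving $M_1$-valued terms traced against the $dy_j$. The required $M_2$-traces simplify via $\sum_j g_2(dy_j,dy_j)=n_2$, $\sum_j g_2(J_2dy_j,dy_j)=0$ (antisymmetry of $\Pi_2$), and $\sum_j g_2(J_2dy_j,J_2dy_j)=\|J_2\|_2^2$, yielding exactly the stated formula.

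Part 2 proceeds analogously, the horizontal sum now using case 2 and the vertical sum case 3 of Proposition \ref{curvature}; the $M_2$-traces $\sum_j g_2(\mathcal{D}^2_{dy_j}\beta_2,dy_j)=\operatorname{div}\beta_2$ and $\sum_j g_2\bigl(J_2\mathcal{D}^2_{dy_j}\beta_2-\mathcal{D}^2_{dy_j}(J_2\beta_2),dy_j\bigr)=\operatorname{tr}_{g_2}(\alpha_2\mapsto(\mathcal{D}^2_{\alpha_2}J)(\beta_2))$ appear precisely as advertised. Part 3 uses case 4 horizontally and case 5 vertically; the crucial recognition is that the $M_1$-traces $\sum_i g_1(\mathcal{D}^1_{dx_i}(J_1df),dx_i)$ and $\sum_i g_1(\mathcal{D}^1_{dx_i}d\mu,dx_i)$ are by definition $\triangleleft_{\Pi_1}(f)$ and $\triangleright_{\Pi_1}(\mu)$ respectively, while the genuinely tensorial part of case 5 contributes $(\mu^2)^h r_2(\alpha_2,\beta_2)^v$ together with the explicit $\|d\mu\|_1^2$, $\|J_1df\|_1^2$ and $g_1(d\mu,J_1df)$ correction terms after the $M_2$-basis contraction (using once more the antisymmetry of $\Pi_2$ to kill the expected cross-terms).

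The main obstacle is bookkeeping rather than any conceptual difficulty: each of the six pieces of Proposition \ref{curvature} that enter the trace expands into several terms, and one must organize the $M_2$-side contractions carefully so that antisymmetry of $\Pi_2$ eliminates the unwanted cross-terms and the surviving $M_1$-side traces are correctly identified as $r_1$, $\triangleleft_{\Pi_1}(f)$, $\triangleright_{\Pi_1}(\mu)$, divergence-type quantities, or norms. No individual step requires new ideas; the challenge is purely that of tracking and combining the roughly dozen contributions per case without sign or scaling errors.
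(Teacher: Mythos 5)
Your proposal is correct and is essentially the paper's own (one-line) proof: the corollary follows by substituting the formulas of Proposition \ref{curvature} into the trace definition of the Ricci curvature over the $g^f$-orthonormal basis $\{dx_i^h,\,(f^h)^{-1/2}dy_j^v\}$ indicated just before the statement, and simplifying the $M_2$-contractions via antisymmetry of $\Pi_2$ and the identification of the $M_1$-traces with $\triangleleft_{\Pi_1}(f)$ and $\triangleright_{\Pi_1}(\mu)$. One small labeling slip: under the paper's convention $r(\theta,\eta)=\sum_i g(\mathcal{R}(\theta,e_i)e_i,\eta)$, the horizontal part of the sum for $r(\alpha_1^h,\beta_2^v)$ uses case 1 of Proposition \ref{curvature} (and in fact vanishes, since its output is horizontal), while the horizontal part for $r(\alpha_2^v,\beta_2^v)$ comes from case 2 via antisymmetry of $\mathcal{R}$ in its first two arguments rather than from case 4 --- this does not affect the validity of the computation.
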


\begin{proof}
It is a consequence of the foregoing proposition.
\end{proof}

\begin{corollary}\label{scalar}
We have
\begin{align*}
\mathcal{S}=&\mathcal{S}_1^h+\left(\frac{\mu^2}{f}\right)^h
\mathcal{S}_2^v-\left(\frac{\|d\mu\|_1^2}{4f^2}\right)^h\left(\|J_2\|_2^2\right)^v\\
&-\left(\frac{n_2(n_2-3)\|J_1df\|_1^2}{4f^2}-\frac{n_2}{f}\lhd_{\Pi_1}(f)\right)^h.
\end{align*}
\end{corollary}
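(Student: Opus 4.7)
The plan is to compute $\mathcal{S}$ by applying its definition in an orthonormal basis of $(T^*(M_1\times M_2), g^f)$ adapted to the product structure, and then reduce each piece via the Ricci formula of Corollary \ref{ricci}. Concretely, fix $g_i$-orthonormal local bases $\{dx_i\}_{i=1}^{n_1}$ and $\{dy_j\}_{j=1}^{n_2}$ on $M_1$ and $M_2$; as noted just before Corollary \ref{ricci}, the family
$$
E=\bigl\{(dx_1)^h,\ldots,(dx_{n_1})^h,\tfrac{1}{\sqrt{f^h}}(dy_1)^v,\ldots,\tfrac{1}{\sqrt{f^h}}(dy_{n_2})^v\bigr\}
$$
is $g^f$-orthonormal. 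Since $\mathcal{S}$ is the $g^f$-trace of the Ricci tensor $r$ relative to $E$, bilinearity of $r$ separates the sum into
$$
\mathcal{S}=\sum_{i=1}^{n_1} r\bigl((dx_i)^h,(dx_i)^h\bigr)+\frac{1}{f^h}\sum_{j=1}^{n_2} r\bigl((dy_j)^v,(dy_j)^v\bigr),
$$
with no cross terms since Corollary \ref{ricci}.2 involves only $r(\alpha_1^h,\beta_2^v)$, which is not sampled here.

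Next, I substitute the formulas from Corollary \ref{ricci}.1 and \ref{ricci}.3 into the two sums. For the horizontal sum I use the trace identities in the $g_1$-orthonormal basis:
$$
\sum_i r_1(dx_i,dx_i)=\mathcal{S}_1,\;\;\sum_i g_1(d\mu,dx_i)^2=\|d\mu\|_1^2,\;\;\sum_i g_1(J_1df,dx_i)^2=\|J_1df\|_1^2,
$$
and $\sum_i g_1(\mathcal{D}^1_{dx_i}(J_1df),dx_i)=\mathrm{tr}_{g_1}(\alpha\mapsto\mathcal{D}^1_\alpha J_1df)=\lhd_{\Pi_1}(f)$ by definition of the operator $\lhd_{\Pi_1}$ from section 3. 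This yields the contribution
$$
\mathcal{S}_1^h+\Bigl(\tfrac{\|d\mu\|_1^2}{4f^2}\Bigr)^h(\|J_2\|_2^2)^v+\Bigl(\tfrac{n_2\|J_1df\|_1^2}{4f^2}\Bigr)^h+\Bigl(\tfrac{n_2\lhd_{\Pi_1}(f)}{2f}\Bigr)^h.
$$
For the vertical sum I use, in the $g_2$-orthonormal basis, the identities
$$
\sum_j r_2(dy_j,dy_j)=\mathcal{S}_2,\;\;\sum_j g_2(dy_j,dy_j)=n_2,\;\;\sum_j\Pi_2(dy_j,dy_j)=0
$$
(the last by antisymmetry), together with $\sum_j\Pi_2(dy_j,J_2dy_j)=\sum_j g_2(J_2dy_j,J_2dy_j)=\|J_2\|_2^2$ obtained from $\Pi_2(\alpha,\beta)=g_2(J_2\alpha,\beta)$ and the definition recalled in Corollary \ref{ricci}.1. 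Dividing by $f^h$ and gathering, this produces
$$
\Bigl(\tfrac{\mu^2}{f}\Bigr)^h\mathcal{S}_2^v-\Bigl(\tfrac{\|d\mu\|_1^2}{2f^2}\Bigr)^h(\|J_2\|_2^2)^v-\Bigl(\tfrac{n_2(n_2-2)\|J_1df\|_1^2}{4f^2}\Bigr)^h+\Bigl(\tfrac{n_2\lhd_{\Pi_1}(f)}{2f}\Bigr)^h.
$$

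Adding the two contributions and simplifying the coefficients term-by-term gives the announced formula: the $\|d\mu\|_1^2\,\|J_2\|_2^2$ coefficient becomes $\tfrac{1}{4f^2}-\tfrac{1}{2f^2}=-\tfrac{1}{4f^2}$; the $\|J_1df\|_1^2$ coefficient becomes $\tfrac{n_2-n_2(n_2-2)}{4f^2}=-\tfrac{n_2(n_2-3)}{4f^2}$; and the $\lhd_{\Pi_1}(f)$ coefficient becomes $\tfrac{n_2}{2f}+\tfrac{n_2}{2f}=\tfrac{n_2}{f}$. The only real obstacle is accounting carefully for these three cancellations, in particular the non-obvious appearance of the factor $(n_2-3)$; everything else is a mechanical trace computation directly from Corollary \ref{ricci} and the definition of $\lhd_{\Pi_1}$.
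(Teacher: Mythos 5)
Your computation is correct and is exactly the argument the paper intends (its proof is just the one-line remark that the corollary "follows directly from Corollary \ref{ricci}"): you trace the Ricci tensor over the adapted $g^f$-orthonormal basis, and your identifications of $\sum_i g_1(\mathcal{D}^1_{dx_i}(J_1df),dx_i)$ with $\lhd_{\Pi_1}(f)$ and of $\sum_j\Pi_2(dy_j,J_2dy_j)$ with $\|J_2\|_2^2$, together with the three coefficient cancellations, all check out.
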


\begin{proof}
The statement follows directly from Corollary \ref{ricci}.
\end{proof}

Finally, to end this section, in the following corollary we express
the sectional contravariant curvature $\mathcal{K}$ of the
contravariant connection $\mathcal{D}$ in terms of the warping
functions $f,\mu$ and the sectional contravariant curvatures
$\mathcal{K}_1$ and $\mathcal{K}_2$ of $\mathcal{D}^1$ and
$\mathcal{D}^2$ respectively.

\begin{corollary}\label{sectional}
For any $\alpha_i,\beta_i \in \Gamma(T^*M_i)$, $i=1,2$, we have
{\small\begin{align*}
1.~\mathcal{K}(\alpha_1^h,\beta_1^h)&=\mathcal{K}_1(\alpha_1,\beta_1)^h \\
2.~\mathcal{K}(\alpha_1^h,\beta_2^v)&=\left(\frac{g_1^2(d\mu,\alpha_1)}{4f^2\|\alpha_1\|^2_1}
\right)^h\left(\frac{\|J_2\beta_2\|^2_2}{\|\beta_2\|^2_2}\right)^v
+\left(\frac{g_1^2(J_1df,\alpha_1)}{4f^2\|\alpha_1\|^2_1}
+\frac{g_1(\mathcal{D}^1_{\alpha_1}J_1df,\alpha_1)}{2f\|\alpha_1\|_1^2}\right)^h, \\
3.~\mathcal{K}(\alpha_2^v,\beta_2^v)&=\left(\frac{\mu^2}{f}\right)^h\mathcal{K}_2(\alpha_2,\beta_2)^v
-\left(\frac{\|d\mu\|_1^2}{4{f}^2}\right)^h\left(\frac{3\Pi_2^2(\alpha_2,\beta_2)}{\|\alpha_2\|^2_2\|\beta_2\|^2_2
-g_2^2(\alpha_2,\beta_2)}\right)^v\\
&-\left(\frac{\|J_1df\|_1^2}{4{f}^2}\right)^h
+\left(\dfrac{g_1(d\mu,J_1df)}{2f^2}\right)^h\left(\frac{\Pi_2(\alpha_2,\beta_2)
g_2(\alpha_2,\beta_2)}{\|\alpha_2\|^2_2\|\beta_2\|^2_2
-g_2^2(\alpha_2,\beta_2)}\right)^v.
\end{align*}}
\end{corollary}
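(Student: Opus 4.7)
The strategy is a direct, case-by-case computation from the definition
$$
\mathcal{K}(\theta,\eta)=\frac{g^f(\mathcal{R}(\theta,\eta)\eta,\theta)}{g^f(\theta,\theta)g^f(\eta,\eta)-g^f(\theta,\eta)^2},
$$
plugging in Proposition \ref{curvature}. The denominator simplifies immediately in each of the three cases because $g^f$ is block-diagonal in the horizontal/vertical decomposition (by (\ref{warped metric})): for $(\alpha_1^h,\beta_1^h)$ the denominator is $(\|\alpha_1\|_1^2\|\beta_1\|_1^2-g_1(\alpha_1,\beta_1)^2)^h$, for $(\alpha_1^h,\beta_2^v)$ it is $f^h(\|\alpha_1\|_1^2)^h(\|\beta_2\|_2^2)^v$ (the cross term $g^f(\alpha_1^h,\beta_2^v)$ vanishes), and for $(\alpha_2^v,\beta_2^v)$ it is $(f^h)^2(\|\alpha_2\|_2^2\|\beta_2\|_2^2-g_2(\alpha_2,\beta_2)^2)^v$. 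So all the real work is in the numerators.

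For case 1, I substitute $\gamma=\beta_1^h$ into Proposition \ref{curvature}.1; since $\gamma_2=0$ the $(J_2\gamma_2)^v$-term drops out, leaving $\mathcal{R}(\alpha_1^h,\beta_1^h)\beta_1^h=[\mathcal{R}_1(\alpha_1,\beta_1)\beta_1]^h$, and pairing with $\alpha_1^h$ via (\ref{warped metric}) gives the numerator $g_1(\mathcal{R}_1(\alpha_1,\beta_1)\beta_1,\alpha_1)^h$. Dividing yields $\mathcal{K}_1(\alpha_1,\beta_1)^h$. For case 2, I set $\gamma=\beta_2^v$ in Proposition \ref{curvature}.2 and pair with $\alpha_1^h$; only the terms that are purely horizontal in their last tensor factor survive the $g^f$-pairing with $\alpha_1^h$ (the $\beta_2^v$, $(J_2\beta_2)^v$ and $(J_2^2\beta_2)^v$ components pair to zero against $\alpha_1^h$). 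After taking $\gamma_1=\alpha_1$ in those horizontal coefficients and dividing by $f^h\|\alpha_1\|_1^2\|\beta_2\|_2^2$, I expect to recover the stated expression; the apparent $g_1(\mathcal{D}^1_{\alpha_1}(J_1df),\alpha_1)$ term produces the $+\tfrac{1}{2f}g_1(\mathcal{D}^1_{\alpha_1}J_1df,\alpha_1)/\|\alpha_1\|_1^2$ contribution once the $-2f$ prefactor and the overall $1/(4f^2)$ are combined.

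Case 3 is the longest: I set $\gamma=\beta_2^v$ in Proposition \ref{curvature}.5, then pair with $\alpha_2^v$. The $(\mu^2)^h\mathcal{R}_2$-term directly contributes $(\mu^2/f)^h\mathcal{K}_2(\alpha_2,\beta_2)^v$ after division by the denominator. The $\mathcal{D}^2\Pi_2$-lines contain a factor $(d\mu)^h$; they pair to zero against $\alpha_2^v$ by the block decomposition of $g^f$, so they can be discarded (this is the only real \textit{simplification} in the proof). For the remaining purely vertical terms I use the elementary identities
\begin{align*}
g_2(J_2(\Pi_2(\alpha_2,\beta_2)\beta_2-\Pi_2(\beta_2,\beta_2)\alpha_2+2\Pi_2(\alpha_2,\beta_2)\beta_2),\alpha_2)&=-3\Pi_2(\alpha_2,\beta_2)^2,\\
g_2(g_2(\alpha_2,\beta_2)\beta_2-\|\beta_2\|_2^2\alpha_2,\alpha_2)&=-(\|\alpha_2\|_2^2\|\beta_2\|_2^2-g_2(\alpha_2,\beta_2)^2),
\end{align*}
(the first uses $g_2(J_2\theta,\theta)=\Pi_2(\theta,\theta)=0$ and antisymmetry of $J_2$ relative to $g_2$, the second is the Gram identity) together with the mixed analogue for the $g_1(d\mu,J_1df)$ terms. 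Dividing out the denominator gives the three stated contributions; the $\|J_1df\|_1^2$ and $g_1(d\mu,J_1df)$ coefficients then match the statement once one remembers the $(f^h)^2$ in the denominator cancels one power of $f$ from the numerator prefactors.

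The main obstacle is purely bookkeeping in case 3: tracking the many terms of Proposition \ref{curvature}.5, identifying which vanish when paired with $\alpha_2^v$, and correctly combining the $\|d\mu\|_1^2$, $\|J_1df\|_1^2$ and $g_1(d\mu,J_1df)$ prefactors after applying the two Gram-type identities above. There is no conceptual difficulty beyond this; once these contractions are tabulated the three assertions drop out.
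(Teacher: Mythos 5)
Your overall strategy is exactly the paper's (its proof is the one-line ``consequence of Proposition \ref{curvature}''), and your treatment of the denominators and of cases 1 and 3 is sound: in case 3 the $(d\mu)^h$-terms do drop out against $\alpha_2^v$, your two contraction identities are correct (with $\gamma_2=\beta_2$ the bracket in the $\|d\mu\|_1^2$-line reduces to $3\Pi_2(\alpha_2,\beta_2)J_2\beta_2$, whence the $-3\Pi_2^2$; the Gram identity absorbs the denominator in the $\|J_1df\|_1^2$-term), and the two $g_1(d\mu,J_1df)$-lines combine to the stated $\tfrac{1}{2f^2}$ coefficient.

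Case 2, however, does not work as written. You say you ``set $\gamma=\beta_2^v$ in Proposition \ref{curvature}.2'', but item 2 of that proposition computes $\mathcal{R}(\alpha_1^h,\beta_2^v)\gamma_1^h$ -- its third slot is horizontal, so there is no $\gamma=\beta_2^v$ to substitute; and every term of its output is vertical (proportional to $\beta_2^v$, $(J_2\beta_2)^v$ or $(J_2^2\beta_2)^v$), so pairing it with $\alpha_1^h$ gives identically $0$, not the claimed numerator. You then nevertheless extract the $-2f\,g_1(\mathcal{D}^1_{\alpha_1}(J_1df),\gamma_1)$ contribution, which is the coefficient of the very $\beta_2^v$-component you just discarded -- the step is internally inconsistent. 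The correct computation uses item 3 with $\gamma_2=\beta_2$: there $\Pi_2(\beta_2,\beta_2)=g_2(\beta_2,J_2\beta_2)=0$ and $\Pi_2(\beta_2,J_2\beta_2)=\|J_2\beta_2\|_2^2$, and the surviving horizontal components $(d\mu)^h$, $(J_1df)^h$ and $(\mathcal{D}^1_{\alpha_1}(J_1df))^h$ paired with $\alpha_1^h$ give precisely the three terms of the statement after dividing by $f^h(\|\alpha_1\|_1^2)^h(\|\beta_2\|_2^2)^v$. (One could instead use item 2 with $\gamma_1=\alpha_1$, pair against $\beta_2^v$, and flip the sign via $g^f(\mathcal{R}(\theta,\eta)\eta,\theta)=-g^f(\mathcal{R}(\theta,\eta)\theta,\eta)$, but you do not invoke that symmetry, and doing so honestly produces $-g_1^2(J_1df,\alpha_1)/(4f^2\|\alpha_1\|_1^2)$ rather than the stated $+$, which points to a sign discrepancy between items 2 and 3 of Proposition \ref{curvature} itself; the route through item 3 is the one that reproduces the corollary as printed.)
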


\begin{proof}
This is a  consequence of Proposition \ref{curvature}.
\end{proof}

\subsection{Geometric consequences}

Finally, we will conclude with some geometric consequences.

\begin{theorem}\label{pseudo-Riemann poisson}
If $f$ is a Casimir function and $\mu$ a nonzero essentially
constant function, the triple $(M_1\times M_2,g^f,\Pi^{^\mu})$ is a
pseudo-Riemannian Poisson manifold if and only if $(M_1,g_1,\Pi_1)$
and $(M_2,g_2, \Pi_2)$ are.
\end{theorem}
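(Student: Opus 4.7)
The plan is to combine Theorem \ref{tensor}, which controls when $\Pi^{^\mu}$ is Poisson, with Proposition \ref{contra Poisson}, which gives the eight components of $\mathcal{D}\Pi^{^\mu}$ on horizontal/vertical lifts. The two hypotheses on $f$ and $\mu$ are precisely what is needed to kill the ``mixed'' components in Proposition \ref{contra Poisson} and leave only the two pure horizontal/vertical diagonals.

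First I would record the consequences of the hypotheses. Essential constancy of $\mu$ gives $d\mu = 0$ on $M_1$, hence also $\mathcal{D}^1_{\alpha_1} d\mu = 0$ for every $\alpha_1$. The Casimir condition on $f$ gives $X_f = \sharp_{\Pi_1}(df) = 0$; since $g_1(J_1 df,\beta_1) = \Pi_1(df,\beta_1) = \beta_1(X_f)$ for every $\beta_1$, nondegeneracy of $g_1$ forces $J_1 df = 0$, and hence $\mathcal{D}^1_{\alpha_1}(J_1 df) = 0$ as well. Substituting these vanishings into Proposition \ref{contra Poisson}, part 3 is identically zero, and every summand appearing in parts 4, 5, 6 carries at least one of $d\mu$, $J_1 df$, $\mathcal{D}^1 d\mu$ or $\mathcal{D}^1(J_1 df)$ as a factor, so those parts vanish too. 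Parts 1 and 2 survive and reduce respectively to $\mathcal{D}\Pi^{^\mu}(\alpha_1^h,\beta_1^h,\gamma_1^h) = [\mathcal{D}^1\Pi_1(\alpha_1,\beta_1,\gamma_1)]^h$ and $\mathcal{D}\Pi^{^\mu}(\alpha_2^v,\beta_2^v,\gamma_2^v) = (\mu^2)^h [\mathcal{D}^2\Pi_2(\alpha_2,\beta_2,\gamma_2)]^v$. Since $\mu$ is nowhere zero, multilinearity together with Lemma \ref{lift}(1)(ii) then gives $\mathcal{D}\Pi^{^\mu} = 0$ if and only if $\mathcal{D}^1\Pi_1 = 0$ and $\mathcal{D}^2\Pi_2 = 0$.

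Finally I would match up the Poisson conditions on the bivector fields themselves. Every essentially constant function is Casimir, so Theorem \ref{tensor} gives the forward direction: $\Pi_1, \Pi_2$ Poisson implies $\Pi^{^\mu}$ Poisson. Conversely, the four Schouten--Nijenhuis components of $[\Pi^{^\mu},\Pi^{^\mu}]_S$ displayed inside the proof of Theorem \ref{tensor} show that $\Pi^{^\mu}$ Poisson together with $\mu \neq 0$ forces $[\Pi_1,\Pi_1]_S = 0$ and $[\Pi_2,\Pi_2]_S = 0$. Combined with the reduction above, this yields the equivalence in both directions. The argument is essentially bookkeeping; the only step demanding real attention is the term-by-term verification that every mixed summand in Proposition \ref{contra Poisson} is killed by the two vanishings, and the only conceptual subtlety is that ``$\mu$ nonzero and essentially constant'' is precisely what lets us divide by $(\mu^2)^h$ on each connected component.
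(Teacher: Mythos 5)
Your proposal is correct and follows essentially the same route as the paper: observe that $\mu$ essentially constant gives $d\mu=0$ and $f$ Casimir gives $J_1df=0$, then read off from Proposition \ref{contra Poisson} that the mixed components of $\mathcal{D}\Pi^{^\mu}$ vanish and the two pure components reduce to $[\mathcal{D}^1\Pi_1]^h$ and $(\mu^2)^h[\mathcal{D}^2\Pi_2]^v$, so that $\mathcal{D}\Pi^{^\mu}=0$ iff $\mathcal{D}^1\Pi_1=\mathcal{D}^2\Pi_2=0$. Your final paragraph matching the Schouten-bracket conditions via Theorem \ref{tensor} is a harmless (arguably more careful) addition that the paper omits, since its stated definition of a pseudo-Riemannian Poisson manifold only requires $\mathcal{D}\Pi=0$.
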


\begin{proof}
First, observe that $f$ is a Casimir function if and only if
$J_1df=0$. Now, by Proposition \ref{contra Poisson}, we have
\begin{eqnarray*}
\mathcal{D}\Pi^{^\mu}(\alpha_1^h,\beta_1^h,\gamma_1^h)
&=&[\mathcal{D}^1\Pi_1(\alpha_1,\beta_1,\gamma_1)]^h,\\
\mathcal{D}\Pi^{^\mu}(\alpha_2^v,\beta_2^v,\gamma_2^v)
&=&(\mu^2)^h[\mathcal{D}^2\Pi_{2}(\alpha_2,\beta_2,\gamma_2)]^v,
\end{eqnarray*}
$$
\mathcal{D}\Pi^{^\mu}(\alpha_1^h,\beta_1^h,\gamma_2^v)=
\mathcal{D}\Pi^{^\mu}(\alpha_1^h,\beta_2^v,\gamma_1^h)=
\mathcal{D}\Pi^{^\mu}(\alpha_2^v,\beta_1^h,\gamma_1^h)=0
$$
and, since $\mu$ is an essentially constant function ($d\mu=0$) and
$f$ is Casimir,
$$
\mathcal{D}\Pi^{^\mu}(\alpha_1^h,\beta_2^v,\gamma_2^v)=
\mathcal{D}\Pi^{^\mu}(\alpha_2^v,\beta_1^h,\gamma_2^v)=
\mathcal{D}\Pi^{^\mu}(\alpha_2^v,\beta_2^v,\gamma_1^h)=0.
$$
This shows the equivalence.
\end{proof}

\begin{theorem}
Under the same assumptions as in Theorem \ref{pseudo-Riemann
poisson}, the triple $(M_1\times M_2,g^f,\Pi^{^\mu})$ is locally
symmetric if and only if $(M_1,g_1,\Pi_1)$ and $(M_2,g_2,\Pi_2)$
are.
\end{theorem}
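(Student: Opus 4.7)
The plan is to exploit the drastic simplifications that $f$ being Casimir (equivalently $J_1df=0$) and $\mu$ being essentially constant (equivalently $d\mu=0$) force upon Propositions~\ref{connection} and~\ref{curvature}. Under these hypotheses the Levi-Civita contravariant connection reduces to the block-diagonal form
\[
\mathcal{D}_{\alpha_1^h}\beta_1^h=(\mathcal{D}^1_{\alpha_1}\beta_1)^h,\qquad \mathcal{D}_{\alpha_2^v}\beta_2^v=\mu^h(\mathcal{D}^2_{\alpha_2}\beta_2)^v,\qquad \mathcal{D}_{\alpha_1^h}\beta_2^v=\mathcal{D}_{\alpha_2^v}\beta_1^h=0,
\]
and the curvature collapses to
\[
\mathcal{R}(\alpha_1^h,\beta_1^h)\gamma_1^h=[\mathcal{R}_1(\alpha_1,\beta_1)\gamma_1]^h,\qquad \mathcal{R}(\alpha_2^v,\beta_2^v)\gamma_2^v=(\mu^2)^h[\mathcal{R}_2(\alpha_2,\beta_2)\gamma_2]^v,
\]
with all other horizontal/vertical combinations of the three arguments yielding $0$. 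I would record these two reductions first.

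Given this structure, I would evaluate $(\mathcal{D}_\alpha\mathcal{R})(\beta,\gamma)\delta$ from its definition~\eqref{locally-sym} on every choice of horizontal/vertical lifts for the four slots. The two pure configurations give
\begin{align*}
(\mathcal{D}_{\alpha_1^h}\mathcal{R})(\beta_1^h,\gamma_1^h)\delta_1^h &= \bigl[(\mathcal{D}^1_{\alpha_1}\mathcal{R}_1)(\beta_1,\gamma_1)\delta_1\bigr]^h,\\
(\mathcal{D}_{\alpha_2^v}\mathcal{R})(\beta_2^v,\gamma_2^v)\delta_2^v &= (\mu^3)^h\bigl[(\mathcal{D}^2_{\alpha_2}\mathcal{R}_2)(\beta_2,\gamma_2)\delta_2\bigr]^v,
\end{align*}
where in the vertical case one uses in addition that $\sharp_{\Pi^{^\mu}}(\alpha_2^v)\bigl((\mu^2)^h\bigr)=0$ (a vertical vector annihilates a horizontal function), so the scalar $(\mu^2)^h$ passes through $\mathcal{D}_{\alpha_2^v}$ and combines with the $\mu^h$ produced by the vertical part of $\mathcal{D}$. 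Every remaining, mixed configuration vanishes identically: either $\mathcal{R}$ already vanishes on the three arguments supplied, or $\mathcal{D}_\alpha$ annihilates some horizontal or vertical factor it encounters; the $C^\infty(M_1\times M_2)$-multilinearity of $\mathcal{R}$ in its contravariant slots lets us pull out the occasional scalar factor $\mu^h$.

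To conclude, Lemma~\ref{lift} says that a $1$-form on $M_1\times M_2$ is determined by its values on horizontal and vertical lifts, hence so is $\mathcal{D}\mathcal{R}$. Since $\mu$ is nowhere zero, the vanishing of $\mathcal{D}\mathcal{R}$ is equivalent to the joint vanishing of its pure-horizontal and pure-vertical components computed above, and therefore to $\mathcal{D}^1\mathcal{R}_1=0$ together with $\mathcal{D}^2\mathcal{R}_2=0$, which is exactly local symmetry of the two factors.

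The main obstacle is purely computational: one must verify that each of the remaining mixed cases of $(\mathcal{D}_\alpha\mathcal{R})(\beta,\gamma)\delta$ really is zero. This is routine using the simplified forms of $\mathcal{D}$ and $\mathcal{R}$ above, but it requires careful bookkeeping, especially when Leibniz produces extra terms in which $\sharp_{\Pi^{^\mu}}(\alpha)$ acts on the scalar $\mu^h$; these always vanish under our hypotheses because $d\mu=0$.
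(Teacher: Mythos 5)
Your proposal is correct and follows essentially the same route as the paper: both reduce $\mathcal{D}$ and $\mathcal{R}$ to their block forms under the hypotheses $J_1df=0$ and $d\mu=0$, then compute $(\mathcal{D}_{\alpha}\mathcal{R})(\beta,\gamma)\delta= \left[(\mathcal{D}^1_{\alpha_1}\mathcal{R}_1)(\beta_1,\gamma_1)\delta_1\right]^h+ ({\mu}^3)^h\left[(\mathcal{D}^2_{\alpha_2}\mathcal{R}_2)(\beta_2,\gamma_2)\delta_2\right]^v$ and conclude from $\mu\neq 0$. The only difference is that you spell out the verification of the vanishing mixed configurations, which the paper leaves implicit.
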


\begin{proof}
Since $f$ is Casimir and $\mu$ is essentially constant, by
Proposition \ref{curvature}, for any
$\alpha_i,\beta_i,\gamma_i\in\Gamma(T^*M_i)$, $i=1,2$, we have
$$
\mathcal{R}(\alpha_1^h,\beta_1^h)\gamma_1^h=\left[\mathcal{R}_1(\alpha_1,\beta_1)\gamma_1\right]^h,\qquad
\mathcal{R}(\alpha_2^v,\beta_2^v)\gamma_2^v=(\mu^2)^h[\mathcal{R}_2(\alpha_2,\beta_2)\gamma_2]^v
$$
and
\begin{align*}
\mathcal{R}(\alpha_1^h,\beta_1^h)\gamma_2^v=
\mathcal{R}(\alpha_1^h,\beta_2^v)\gamma_1^h&=
\mathcal{R}(\alpha_2^v,\beta_1^h)\gamma_1^h\\
&=\mathcal{R}(\alpha_1^h,\beta_2^v)\gamma_2^v=
\mathcal{R}(\alpha_2^v,\beta_1^h)\gamma_2^v=
\mathcal{R}(\alpha_2^v,\beta_1^v)\gamma_1^h=0,
\end{align*}
and by Proposition \ref{connection}, for any
$\alpha_i,\beta_i\in\Gamma(T^*M_i)$, $i=1,2$, we have
$$
\mathcal{D}_{\alpha_1^h}\beta_1^h=
(\mathcal{D}^1_{\alpha_1}\beta_1)^h, \qquad
\mathcal{D}_{\alpha_2^v}\beta_2^v=\mu^h
(\mathcal{D}^2_{\alpha_2}\beta_2)^v,
$$
and
$$
\mathcal{D}_{\alpha_1^h}\beta_2^v=\mathcal{D}_{\alpha_2^v}\beta_1^h=0.
$$
Therefore, by (\ref{locally-sym}), for
$\alpha_i,\beta_i,\gamma_i,\delta_i\in\Gamma(T^*M_i)$, $i=1,2$, if
we set $\alpha=\alpha_1^h+\alpha_2^v$, $\beta=\beta_1^h+\beta_2^v$,
$\gamma=\gamma_1^h+\gamma_2^v$ and $\delta=\delta_1^h+\delta_2^v$,
we have
$$
(\mathcal{D}_{\alpha}\mathcal{R})(\beta,\gamma)\delta=
\left[(\mathcal{D}^1_{\alpha_1}\mathcal{R}_1)(\beta_1,\gamma_1)\delta_1\right]^h+
({\mu}^3)^h\left[(\mathcal{D}^2_{\alpha_2}\mathcal{R}_2)(\beta_2,\gamma_2)\delta_2\right]^v.
$$
Hence $\mathcal{D}\mathcal{R}=0$ if and only if
$\mathcal{D}^1\mathcal{R}_1=\mathcal{D}^2\mathcal{R}_2=0$.
\end{proof}

\begin{theorem}\label{flat}
Under the same assumptions as in Theorem \ref{pseudo-Riemann
poisson}, the triple $(M_1\times M_2,g^f,\Pi^{^\mu})$ is flat if and
only if $(M_1,g_1,\Pi_1)$ and $(M_2,g_2,\Pi_2)$ are flat.
\end{theorem}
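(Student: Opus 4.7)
The plan is to invoke Proposition \ref{curvature} directly under the simplifying hypotheses of the theorem and check that all of the extra terms disappear. First, I would translate the two assumptions into the useful algebraic conditions: that $\mu$ is essentially constant means $d\mu=0$, and that $f$ is a Casimir function of $\Pi_1$ means $\sharp_{\Pi_1}(df)=0$, i.e.\ $J_1df=0$ (by the defining identity $g_1(J_1df,\cdot)=\Pi_1(df,\cdot)$).

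Next, I would substitute $d\mu=0$ and $J_1df=0$ into each of the five formulas of Proposition \ref{curvature}. In cases 2, 3 and 4 every term carries a factor of $d\mu$ or of $J_1df$ (or a contravariant derivative applied to one of them), so each of these mixed curvature terms vanishes identically. In case 5 the extra terms all come with coefficients $\|d\mu\|_1^2/f$, $\|J_1df\|_1^2/f$, $g_1(d\mu,J_1df)/f$, or the difference $\mathcal{D}^2\Pi_2(\alpha_2,\beta_2,\gamma_2)-\mathcal{D}^2\Pi_2(\beta_2,\alpha_2,\gamma_2)$ multiplied by $d\mu$, so all of them drop out. What remains is exactly
$$
\mathcal{R}(\alpha_1^h,\beta_1^h)\gamma_1^h=[\mathcal{R}_1(\alpha_1,\beta_1)\gamma_1]^h,\qquad \mathcal{R}(\alpha_2^v,\beta_2^v)\gamma_2^v=(\mu^2)^h[\mathcal{R}_2(\alpha_2,\beta_2)\gamma_2]^v,
$$
with all mixed components equal to zero. (These simplified identities already appear in the proof of the previous theorem on local symmetry.)

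Finally, I would conclude the equivalence. Since the horizontal and vertical lift operations on tensor fields are injective, $[\mathcal{R}_1(\alpha_1,\beta_1)\gamma_1]^h=0$ for all $\alpha_1,\beta_1,\gamma_1$ is equivalent to $\mathcal{R}_1=0$, and because $\mu$ is a nonzero constant on each connected component so that $(\mu^2)^h$ is a nowhere-vanishing function, $(\mu^2)^h[\mathcal{R}_2(\alpha_2,\beta_2)\gamma_2]^v=0$ for all arguments is equivalent to $\mathcal{R}_2=0$. Using Lemma \ref{lift} part 1(ii) to reduce a general curvature equation to its values on lifted arguments, one concludes $\mathcal{R}=0$ iff $\mathcal{R}_1=0$ and $\mathcal{R}_2=0$.

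There is essentially no obstacle here beyond bookkeeping; the only thing to be careful about is the hypothesis that $\mu$ is nonzero, which is precisely what allows one to divide out $(\mu^2)^h$ in the vertical component and obtain $\mathcal{R}_2=0$ from $(\mu^2)^h\mathcal{R}_2^v=0$. Without it, the forward implication toward flatness of $(M_2,g_2,\Pi_2)$ would fail.
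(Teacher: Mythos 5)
Your proposal is correct and follows essentially the same route as the paper: the authors also specialize Proposition \ref{curvature} to $d\mu=0$ and $J_1df=0$ (this is done explicitly in the proof of the preceding local-symmetry theorem, to which the paper's one-line proof of Theorem \ref{flat} simply refers), obtaining $\mathcal{R}(\alpha_1^h,\beta_1^h)\gamma_1^h=[\mathcal{R}_1(\alpha_1,\beta_1)\gamma_1]^h$, $\mathcal{R}(\alpha_2^v,\beta_2^v)\gamma_2^v=(\mu^2)^h[\mathcal{R}_2(\alpha_2,\beta_2)\gamma_2]^v$ and vanishing mixed components. Your added remarks on the injectivity of the lifts and on the role of $\mu\neq 0$ in dividing out $(\mu^2)^h$ are correct and only make explicit what the paper leaves implicit.
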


\begin{proof}
In the proof of the theorem above we see that $\mathcal {R}=0$ if
and only if $\mathcal{R}_1=\mathcal{R}_2=0$.
\end{proof}

\begin{theorem}
Under the same assumptions as in Theorem \ref{pseudo-Riemann
poisson}, the triple $(M_1\times M_2,g^f,\Pi^{^\mu})$ is Ricci flat
if and only if $(M_1,g_1,\Pi_1)$ and $(M_2,g_2,\Pi_2)$ are Ricci
flat.
\end{theorem}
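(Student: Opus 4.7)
The plan is to read off the Ricci curvature pieces from Corollary \ref{ricci} under the simplifying hypotheses of Theorem \ref{pseudo-Riemann poisson} and observe that everything collapses to the two factor Ricci tensors.

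First I would unpack the assumptions: $f$ being a Casimir function on $(M_1,\Pi_1)$ means $\sharp_{\Pi_1}(df)=0$, hence $J_1 df = 0$ (since $g_1(J_1 df,\cdot)=\Pi_1(df,\cdot)$), while $\mu$ essentially constant gives $d\mu=0$. With these two identities in hand, I would go through the three components of the Ricci tensor in Corollary \ref{ricci} and cancel every term that carries a factor of $d\mu$, $J_1 df$, $\|d\mu\|_1^2$, $\|J_1 df\|_1^2$, $g_1(d\mu,J_1df)$, $\mathcal{D}^1_{\alpha_1}(J_1df)$, $\rhd_{\Pi_1}(\mu)$ or $\lhd_{\Pi_1}(f)$. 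Note that the last two operators vanish because $\rhd_{\Pi_1}(\mu)=\operatorname{tr}_{g_1}(\alpha\mapsto\mathcal{D}^1_\alpha d\mu)$ and $\lhd_{\Pi_1}(f)=\operatorname{tr}_{g_1}(\alpha\mapsto\mathcal{D}^1_\alpha J_1 df)$, and both integrands are identically zero.

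After these cancellations, the three formulas in Corollary \ref{ricci} reduce to
\begin{align*}
r(\alpha_1^h,\beta_1^h) &= r_1(\alpha_1,\beta_1)^h, \\
r(\alpha_1^h,\beta_2^v) &= 0, \\
r(\alpha_2^v,\beta_2^v) &= (\mu^2)^h\, r_2(\alpha_2,\beta_2)^v.
\end{align*}
Since by Lemma \ref{lift} any $1$-form on $M_1\times M_2$ decomposes as $\alpha_1^h+\alpha_2^v$, these three identities determine $r$ completely. The equivalence then follows: if $r_1$ and $r_2$ both vanish, so does $r$; conversely, if $r=0$, then evaluating on pure horizontal (resp. vertical) arguments forces $r_1=0$ (resp. $(\mu^2)^h r_2^v = 0$, whence $r_2=0$ because $\mu$ is nonzero).

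The only minor obstacle is the bookkeeping in checking that every non-factorized term in the three Ricci formulas is indeed killed by $d\mu=0$ and $J_1df=0$; once that checklist is completed, the biconditional is immediate and no extra argument is needed.
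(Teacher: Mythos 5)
Your proposal is correct and follows essentially the same route as the paper: the published proof likewise specializes Corollary \ref{ricci} under $d\mu=0$ and $J_1df=0$ to obtain $r(\alpha_1^h,\beta_1^h)=r_1(\alpha_1,\beta_1)^h$, $r(\alpha_2^v,\beta_2^v)=(\mu^2)^h r_2(\alpha_2,\beta_2)^v$ and $r(\alpha_1^h,\beta_2^v)=0$, and concludes the equivalence. Your explicit justification that the trace operators applied to $\mu$ and $f$ vanish is a correct (and slightly more detailed) version of the bookkeeping the paper leaves implicit.
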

\begin{proof}
By Corollary \ref{ricci}, for any
$\alpha_i,\beta_i\in\Gamma(T^*M_i)$, $i=1,2$ we have
$$
r(\alpha_1^h,\beta_1^h)=r_1(\alpha_1,\beta_1)^h, \qquad
r(\alpha_2^v,\beta_2^v)=({\mu}^2)^hr_2(\alpha_2,\beta_2)^v
$$
and
$$
r(\alpha_1^h,\beta_2^v)=r(\alpha_2^v,\beta_1^h)=0.
$$
Therefore, $r=0$ if and only if $r_1=r_2=0$.
\end{proof}

Also, under the same assumptions, i.e. $f$ Casimir and $\mu$ nonzero
essentially constant, we deduce from Corollary \ref{sectional} that
$\mathcal{K}=0$ if and only if $\mathcal{K}_1=\mathcal{K}_2=0$.

\begin{proposition}
Assume that ${\mu}$ is a nonzero essentially constant function. If
$(M_1\times M_2,g^f,\Pi^{^\mu})$ has a constant sectional curvature
$k$, then both $(M_1,g_1,\Pi_1)$ and $(M_2,g_2,\Pi_2)$ have a
constant sectional curvature,
$$
\mathcal{K}_1=k \quad \textrm{ and } \quad
\mathcal{K}_2=\frac{f}{\mu^2}k+\frac{1}{4\mu^2f}\|J_1df\|_1^2.
$$
Furthermore, if $f$ is Casimir then it is constant.
\end{proposition}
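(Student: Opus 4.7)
The argument relies on Corollary \ref{sectional}. Since $\mu$ is essentially constant we have $d\mu\equiv 0$, killing every term in formulas~1--3 that contains a factor of $d\mu$ or $\|d\mu\|_1^2$. The plan is to set each of the three resulting simplified expressions equal to the constant $k$ and read off the three conclusions factor by factor.

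Formula~1 is unaffected: $\mathcal{K}(\alpha_1^h,\beta_1^h)=\mathcal{K}_1(\alpha_1,\beta_1)^h=k$ at every point of $M_1\times M_2$, whence $\mathcal{K}_1\equiv k$. Formula~3 simplifies to
\begin{equation*}
\mathcal{K}(\alpha_2^v,\beta_2^v)=\left(\frac{\mu^2}{f}\right)^h\mathcal{K}_2(\alpha_2,\beta_2)^v-\left(\frac{\|J_1df\|_1^2}{4f^2}\right)^h=k,
\end{equation*}
and solving for $\mathcal{K}_2$ yields, at every $(p,q)\in M_1\times M_2$ and every non-degenerate 2-plane at $q$,
\begin{equation*}
\mathcal{K}_2(\alpha_2,\beta_2)(q)=\frac{f(p)}{\mu^2(p)}\,k+\frac{\|J_1df\|_1^2(p)}{4\mu^2(p)\,f(p)}.
\end{equation*}
The crucial step is a separation-of-variables observation: the left-hand side depends only on $q\in M_2$ and the chosen plane, while the right-hand side depends only on $p\in M_1$. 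Both sides must therefore coincide with a common constant, which simultaneously proves that $\mathcal{K}_2$ is constant on $M_2$ and identifies its value with the displayed expression.

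For the final clause, if $f$ is Casimir then $X_f=\sharp_{\Pi_1}(df)=0$; since $g_1(J_1df,\beta)=\Pi_1(df,\beta)$ holds for every $\beta$ and $g_1$ is non-degenerate, this forces $J_1df\equiv 0$ on $M_1$. Substituting $\|J_1df\|_1^2=0$ into the displayed formula for $\mathcal{K}_2$ collapses it to $\mathcal{K}_2=(f/\mu^2)\,k$: a constant on $M_2$ equated to a function on $M_1$. Because $\mu$, hence $\mu^2$, is essentially constant on $M_1$, the function $f$ must be essentially constant on $M_1$ as well, yielding the stated conclusion. The main delicate point in the entire proof is precisely the separation-of-variables step in the previous paragraph, as it is what converts a single product-manifold identity into independent statements about each factor; everything else reduces to direct substitution into Corollary \ref{sectional}.
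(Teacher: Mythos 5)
Your proposal is correct in its main thrust and follows essentially the same route as the paper: the paper likewise substitutes $d\mu=0$ into parts 1 and 3 of Corollary \ref{sectional}, reads off $\mathcal{K}_1=k$, and solves part 3 to obtain
$$
\mathcal{K}_2(\alpha_2,\beta_2)^v=\left(\frac{f}{\mu^2}\,k+\frac{\|J_1df\|_1^2}{4\mu^2 f}\right)^h,
$$
after which it simply says ``and the proposition follows.'' The separation-of-variables step you flag as the delicate point --- a vertical lift equal to a horizontal lift forces both sides to be (essentially) constant --- is precisely what the paper leaves implicit in that closing phrase, so making it explicit is a genuine improvement in exposition, not a deviation in method. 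Your identification $J_1df=0$ for Casimir $f$, via $g_1(J_1df,\beta)=\Pi_1(df,\beta)$ and nondegeneracy of $g_1$, is also exactly the observation the paper uses (compare the first line of the proof of Theorem \ref{pseudo-Riemann poisson}).

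There is, however, one gap in your final paragraph --- shared with the paper, but worth naming since you present the step as airtight. From $\mathcal{K}_2=(f/\mu^2)\,k$ with $\mathcal{K}_2$ constant you conclude that $f$ is essentially constant, but this tacitly divides by $k$: when $k=0$ the identity reads $0=0$ and carries no information about $f$. The case $k=0$ is not vacuous here. By Theorem \ref{flat}, whose hypotheses ($f$ Casimir, $\mu$ nonzero essentially constant) are exactly those in force, taking flat factors $(M_1,g_1,\Pi_1)$ and $(M_2,g_2,\Pi_2)$ produces a flat warped product --- hence constant sectional curvature $k=0$ --- for \emph{any} positive Casimir $f$, including nonconstant ones; the mixed-curvature formula (part 2 of Corollary \ref{sectional}) imposes no further constraint either, since all its terms contain $d\mu$ or $J_1df$. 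So the ``furthermore'' clause is true only for $k\neq 0$, and your argument is complete exactly in that case; for $k=0$ the clause fails, which is a defect of the proposition as stated rather than something your proof could repair. A careful write-up should state the assumption $k\neq 0$ where you pass from $fk/\mu^2$ constant to $f$ constant.
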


\begin{proof}
By 1. of Corollary \ref{sectional}, for any
$\alpha_1,\beta_1\in\Gamma(T^*M_1)$, we have
$$
\mathcal{K}_1(\alpha_1,\beta_1)^h=\mathcal{K}(\alpha_1^h,\beta_1^h)=k,
$$
hence $\mathcal{K}_1=k$. Since $\mu$ is essentially constant, by 3.
of the same corollary, for any $\alpha_2,\beta_2\in\Gamma(T^*M_2)$,
we have
$$
\mathcal{K}(\alpha_2^v,\beta_2^v)=
\left(\frac{\mu^2}{f}\right)^h\mathcal{K}_2(\alpha_2,\beta_2)^v-\left(\frac{\|J_1df\|_1^2}{4f^2}\right)^h,
$$
hence
$$
\mathcal{K}_2(\alpha_2,\beta_2)^v=\left(\frac{f}{\mu^2}k+\frac{\|J_1df\|_1^2}{4\mu^2f}
\right)^h,
$$
and the proposition follows.
\end{proof}

\begin{theorem}
Under the same assumptions as in Theorem \ref{pseudo-Riemann
poisson}, the triple $(M_1\times M_2,g^f,\Pi^{^\mu})$ is metaflat if
and only if $(M_1,g_1,\Pi_1)$ and $(M_2,g_2,\Pi_2)$ are metaflat.
\end{theorem}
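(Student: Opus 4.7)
The plan is to reduce metaflatness to two conditions that have already been shown to behave well under the warped product construction. Recall that $(M_1\times M_2,g^f,\Pi^{^\mu})$ is metaflat by definition when $\mathcal{R}=0$ \emph{and} the metacurvature $\mathcal{M}$ of the Levi-Civita contravariant connection $\mathcal{D}$ vanishes; the analogous statement defines metaflatness for $(M_i,g_i,\Pi_i)$. So the iff statement splits into two pieces: the vanishing of curvature on each factor, and the vanishing of the metacurvature on each factor.

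First I would dispose of the curvature piece: Theorem \ref{flat} already gives, under the hypotheses ($f$ Casimir, $\mu$ nonzero essentially constant), the equivalence $\mathcal{R}=0 \iff \mathcal{R}_1=\mathcal{R}_2=0$. This lets me assume flatness throughout the rest of the argument and thus invoke Hawkins' characterization of metaflatness in its cleanest form, namely that when the underlying contravariant connection is flat, the metacurvature vanishes if and only if the associated generalized pre-Poisson bracket satisfies the graded Jacobi identity \eqref{gen bracket jacobi}.

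Next I would exploit Corollary \ref{generalized Poisson}: since $\mu$ is nonzero, the bracket $\{.,.\}_\mu$ associated with $\mathcal{D}^\mu$ satisfies graded Jacobi if and only if both $\{.,.\}_1$ and $\{.,.\}_2$ do. Since $d\mu=0$, $\Pi^{^\mu}$ is indeed a Poisson tensor by Theorem \ref{tensor}, and Proposition \ref{connection} together with the definition of $\mathcal{D}^\mu$ in subsection \ref{wgPb} show that the Levi-Civita contravariant connection on the warped product restricts on pure lifts exactly to $\mathcal{D}^\mu$ (here the vanishing of $d\mu$ and $J_1df$ kills all the extra terms in Proposition \ref{connection}). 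Consequently the generalized pre-Poisson bracket of $\mathcal{D}$ agrees with $\{.,.\}_\mu$, so Corollary \ref{generalized Poisson} applies verbatim.

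Stringing these together gives: $(M_1\times M_2,g^f,\Pi^{^\mu})$ metaflat $\iff$ $\mathcal{R}=0$ and $\{.,.\}_\mu$ is graded Jacobi $\iff$ $\mathcal{R}_1=\mathcal{R}_2=0$ and both $\{.,.\}_1$, $\{.,.\}_2$ are graded Jacobi $\iff$ each $(M_i,g_i,\Pi_i)$ is metaflat. The only potential obstacle I foresee is verifying that under the standing hypotheses the Levi-Civita contravariant connection $\mathcal{D}$ on $(g^f,\Pi^{^\mu})$ truly reduces to the connection $\mathcal{D}^\mu$ of subsection \ref{wgPb} on lifts, but this is immediate from Proposition \ref{connection} once one sets $d\mu=0$ and $J_1df=0$; no new computation is required, and the bulk of the argument is just invoking earlier results.
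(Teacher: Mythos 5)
Your proposal is correct and follows essentially the same route as the paper: identify $\mathcal{D}$ with $\mathcal{D}^\mu$ via Proposition \ref{connection} under the hypotheses $d\mu=0$ and $J_1df=0$, handle the curvature part with Theorem \ref{flat}, and reduce the metacurvature part to the graded Jacobi identity via Hawkins' characterization and Corollary \ref{generalized Poisson}. The only difference is that you spell out the two-part decomposition of metaflatness more explicitly than the paper does.
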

\begin{proof}
Since $f$ is Casimir and $\mu$ is essentially constant, by
Proposition \ref{connection} we see that the contravariant
Levi-Civita connection $\mathcal{D}$ is nothing else than the
connection $\mathcal{D}^\mu$ defined in \S\,\ref{wgPb}, therefore,
the generalized pre-Poisson bracket associated with $\mathcal{D}$ is
precisely the bracket $\{.,.\}_\mu$ associated with
$\mathcal{D}^\mu$. Now, by Theorem \ref{flat} and since the
vanishing of the metacurvature is equivalent to the graded Jacobi
identity $(\ref{gen bracket jacobi})$, one can deduce from Corollary
\ref{generalized Poisson} that $\mathcal{D}$ is metaflat if and only
if $\mathcal{D}^1$ and $\mathcal{D}^2$ are metaflat.
\end{proof}



\medskip
Received January 2013; revised August 2014. \medskip

\end{document}